\documentclass[webpdf,modern,medium]{oup-authoring-template}

\onecolumn

\usepackage[all,pdf]{xy}
\usepackage{etoolbox}

\makeatletter
\patchcmd{\@maketitle}{\raisebox{5pt}[0pt]}{\@gobble}{}{\PatchFailed}
\patchcmd{\@maketitle}{\raisebox{5pt}[0pt]}{\@gobble}{}{\PatchFailed}
\patchcmd{\@maketitle}{\raisebox{5pt}[0pt]}{\@gobble}{}{\PatchFailed}
\patchcmd{\@maketitle}{{\color{black!20}\rule{45pt}{55pt}}}{}{}{\PatchFailed}
\patchcmd{\@maketitle}{{\color{black!20}\rule{45pt}{55pt}}}{}{}{\PatchFailed}
\patchcmd{\@maketitle}{{\color{black!20}\rule{45pt}{55pt}}}{}{}{\PatchFailed}
\def\ps@opening{\let\@oddhead\@empty\let\@evenhead\@empty
  \def\@oddfoot{\hfil\thepage\hfil}\let\@evenfoot\@oddfoot}
\makeatother

\numberwithin{equation}{subsection}

\makeatletter
\g@addto@macro\normalsize{%
  \abovedisplayskip 8\p@ \@plus2\p@ \@minus4\p@
  \belowdisplayskip \abovedisplayskip
  \abovedisplayshortskip 3\p@ \@plus2\p@
  \belowdisplayshortskip 5\p@ \@plus2\p@ \@minus2\p@}
\def\section{\@startsection{section}{1}{\z@}%
  {-13\p@ plus -3\p@}{4\p@}{\reset@font\raggedright\secsize}}
\def\subsection{\@startsection{subsection}{2}{\z@}%
  {-9\p@ plus -2\p@}{2\p@}{\reset@font\raggedright\subsecsize}}
\def\subsubsection{\@startsection{subsubsection}{3}{\z@}%
  {-9\p@ plus -1\p@}{0.1em}{\reset@font\raggedright\subsubsecsize}}
\AtBeginDocument{%
\newtheoremstyle{thmstyleone}{8pt plus2pt minus1pt}{8pt plus2pt minus1pt}%
  {\normalfont\itshape}{0pt}{\bfseries}{}{.5em}%
  {\thmname{#1}\thmnumber{\@ifnotempty{#1}{ }\@upn{#2}}\thmnote{ {\the\thm@notefont(#3)}}}%
\newtheoremstyle{thmstylethree}{8pt plus2pt minus1pt}{8pt plus2pt minus1pt}%
  {\normalfont}{0pt}{\bfseries}{}{.5em}%
  {\thmname{#1}\thmnumber{\@ifnotempty{#1}{ }\@upn{#2}}\thmnote{ {\the\thm@notefont(#3)}}}%
}
\makeatother

\theoremstyle{thmstyleone}
\newtheorem{thm}{Theorem}[subsection]
\newtheorem{prop}[thm]{Proposition}
\newtheorem{lem}[thm]{Lemma}
\newtheorem{cor}[thm]{Corollary}

\theoremstyle{thmstylethree}
\newtheorem{exa}[thm]{Example}
\newtheorem{rmk}[thm]{Remark}

\setcitestyle{numbers,square,comma}

\renewcommand\keywordsname{2020 Mathematics Subject Classification:}

\newcommand{\End}{{\operatorname{End}}}
\newcommand{\Ext}{{\rm Ext}}
\newcommand{\Fr}{{\operatorname{Fr }}}
\newcommand{\Hom}{{\operatorname{Hom}}}
\newcommand{\Ker}{{\operatorname{Ker}}}
\newcommand{\Lie}{{\operatorname{Lie}}}
\newcommand{\ord}{{\operatorname{ord }}}
\newcommand{\Spec}{{\operatorname{Spec }}}
\newcommand{\Sym}{{\operatorname{Sym }}}
\newcommand{\val}{{\operatorname{val}}}

\newcommand{\uy}{{\underline{y}}}

\newcommand{\calC}{{\mathcal{C}}}
\newcommand{\calF}{{\mathcal{F}}}
\newcommand{\calL}{{\mathcal{L}}}
\newcommand{\calN}{{\mathcal{N}}}
\newcommand{\calO}{{\mathcal{O}}}
\newcommand{\calT}{{\mathcal{T}}}
\newcommand{\calV}{{\mathcal{V}}}
\newcommand{\calW}{{\mathcal{W}}}
\newcommand{\calZ}{{\mathcal{Z}}}

\def\AA{\mathbb{A}}
\def\CC{\mathbb{C}}
\def\FF{\mathbb{F}}
\def\GG{\mathbb{G}}
\def\PP{\mathbb{P}}
\def\QQ{\mathbb{Q}}
\def\RR{\mathbb{R}}
\def\WW{\mathbb{W}}
\def\ZZ{\mathbb{Z}}

\newcommand{\scrC}{{\mathscr{C}}}
\newcommand{\scrD}{{\mathscr{D}}}
\newcommand{\scrF}{{\mathscr{F}}}
\newcommand{\scrG}{{\mathscr{G}}}
\newcommand{\scrK}{{\mathscr{K}}}
\newcommand{\scrL}{{\mathscr{L}}}

\newcommand{\id}{{\noindent}}
\newcommand{\arr}{{\; \rightarrow \;}}
\newcommand{\Arr}{{\; \longrightarrow \;}}
\newcommand{\surjects}{{\; \twoheadrightarrow \;}}
\newcommand{\injects}{{\; \hookrightarrow \;}}
\newcommand{\fpbar}{\overline{\mathbb{F}}_p}

\begin{document}

\firstpage{1}

\title[Foliations on abelian surfaces in positive characteristic]{Foliations on abelian surfaces in positive characteristic}

\author[1]{Fabrizio Andreatta}
\author[2]{Eyal Z. Goren}

\authormark{F. Andreatta and E. Z. Goren}

\address[1]{\orgdiv{Department of Mathematics ``Federigo Enriques''}, \orgname{University of Milan}, \orgaddress{\street{via Saldini 50}, \postcode{20133} \state{Milano}, \country{Italy}}. \href{mailto:fabrizio.andreatta@unimi.it}{fabrizio.andreatta@unimi.it}}
\address[2]{\orgdiv{Department of Mathematics and Statistics}, \orgname{McGill University}, \orgaddress{\street{805 Sherbrooke St.\ W}, \state{Montr\'eal, QC}, \postcode{H3A 0B9}, \country{Canada}}. \href{mailto:eyal.goren@mcgill.ca}{eyal.goren@mcgill.ca}}

\abstract{This paper studies foliations on abelian surfaces in positive characteristic. We provide an explicit description of all smooth foliations, linear and turbulent, and establish criteria for being $p$-closed. We explicitly determine the moduli spaces of such foliations. We give an application to the construction of $p$-closed foliations, with mild singularities, on Kummer $K3$-surfaces in positive characteristics.}
\keywords{Primary 14K05; Secondary 14J10, 14J60}

\maketitle

\section*{Introduction}
The study of foliations is an important theme in topology and differential equations and is an active and vast area of research. Foliations on real manifolds can be thought of as a decomposition of a manifold into a disjoint union of leaves, where locally the leaves have the structure of the collection of $d$-dimensional planes parallel to $\RR^d \times \{ 0^{n-d}\}$ in $\RR^n$. From a different perspective, foliations can be thought of as a geometric incarnation of the study of differential equations, where the connection is made through Frobenius' theorem, which gives necessary and sufficient conditions for the existence and uniqueness of a maximal set of independent solutions to an overdetermined system of first-order homogeneous linear partial differential equations. 

Assuming that the manifolds are algebraic varieties $X$, a foliation is, by definition, a saturated involutive subsheaf of the tangent sheaf $\calT_X$. Over the complex numbers, questions of much interest concern when there are smooth foliations on a variety $X$ or, more broadly, what can be said about their singularities, when their leaves are algebraic, and whether there is a way to resolve their singularities (or at least bring them to some sort of normal form). In that context, one can mention the Grothendieck-Katz $p$-curvature conjecture (with an extension due to Ekedahl--Shepherd-Barron--Taylor) about algebraic solutions to systems of PDEs and works of Katz and Bost \cite{Katz, Katz2, Bost}, works of Brunella and McQuillan on resolution of singularities \cite{Bru1, McQ}, works of Bott, Jouanolou, Ghys, Cerveau, Lins Neto, Camacho, Sad, Loray, Pereira, Touzet, and many others, covering diverse subjects. We were inspired by works of Brunella \cite{Brunella}, Lo Bianco--Pereira \cite{LBP} and Mendson--Pereira \cite{MeP}.

In characteristic $p$, a new element comes into play, which is a geometric version of Jacobson's ``inseparable Galois theory'' that connects $p$-closed foliations on a smooth variety $X$ to varieties~$Y$ lying between $X$ and its Frobenius transform $X^{(p)}$; some of the contributors to that theory are Ekedahl, Miyaoka, Rudakov--Shafarevich and Shepherd-Barron \cite{Ekedahl, Miyaoka, RS, SB}. 

While much work has been done over the complex numbers by the Brazilian and French schools, especially for projective spaces and other special varieties (such as homogeneous varieties), the study of similar questions in characteristic $p$ is not as developed. Our interest in this paper is in the case of abelian surfaces. We view that as a test case for a more general study of foliations on abelian varieties, which we hope to undertake, possibly combining it with work of Goren--de Shalit \cite{GdS} on foliations on certain Shimura varieties. 
However, even the study of abelian surfaces reveals very rich, new and intriguing phenomena. 

\medskip

\id Let $A$ be an abelian variety over an algebraically closed field $k$. A foliation $\scrF$ on $A$ is a saturated involutive subsheaf $\scrF \subseteq \calT_A$ of the tangent bundle. We will focus mainly on smooth foliations, meaning those for which $\scrF$ is a locally direct summand of $\calT_A$. There is always a class of easily understood smooth foliations on $A$ that are linear foliations: they arise from extending by translation a subspace $V$ of the tangent space to $A$ at the origin. Our main interest is in non-linear foliations which, for $A$ a surface and $\scrF$ smooth, are called turbulent foliations. 

If $k$ has characteristic $p$, a foliation $\scrF$ may or may not be $p$-closed. One calls $\scrF$ $p$-closed if, whenever $D$ is a local section of $\scrF$, also $D^p$ is a local section (where $D^p$ is the $p$-fold iteration of $D$, thought of as a derivation). For example, if $\scrF$ is a linear foliation, then it is $p$-closed if and only if it comes from a subspace $V$ that is a $p$-Lie subalgebra of $\Lie(A)$. Again, for linear foliations, being $p$-closed is a well understood condition via the theory of finite commutative group schemes. 

Thus, our main focus in this paper is to classify foliations on abelian surfaces $A$ over an algebraically closed field $k$ of characteristic $p>0$ and to determine their moduli and to formulate, in terms of the moduli space, the conditions for such a foliation $\scrF$ to be $p$-closed. To a $p$-closed foliation $\scrF$ there is a corresponding quotient variety $X = A/\scrF$ and a morphism $f\colon A \arr X$ that factors the Frobenius morphism ${\rm Frob} \colon A \arr A^{(p)}$:
\[ \xymatrix{A \ar[r]_f\ar@/^1pc/[rr]^{\rm Frob} & X \ar[r] & A^{(p)}}. \]

\id {\bf Description of our main results.} \hspace{0.05cm} The article is divided into three parts. In the first part we study a single foliation on $A$, in the second we study families of foliations, and in the third we study properties of $A/\scrF$ and provide applications to $K3$ surfaces. 

\medskip 
\id \textit{\S 1. A single foliation.} \; Given a turbulent foliation $\scrF$ on an abelian surface $A$, we associate to it an elliptic curve $E \subset A$ and a quotient elliptic curve $Y = A/E$. In particular, the existence of a turbulent foliation implies that $A$ is not simple (Corollary~\ref{cor: turbulent implies non-simple}). Let $\tau\colon A \arr Y$ denote the canonical morphism. We show that $\scrF$ is determined by the data $(E, \uy, f)$, where $\uy$ is an effective  divisor of degree $d$ on $Y$ and $f\in H^0(Y, \calO_Y(\uy))$ has polar divisor exactly~$\uy$.

Vice versa, given any such triple $(E, \uy, f)$ we can construct a matching foliation in the following way. As $E$ is fixed, we may fix differentials $\tilde \omega_E, \tilde \omega_Y$ on $A$ that are naturally related to $E$ and $Y$ (the choice is not unique, but is made once and for all once $E$ is given). One lets $\eta = \tau^\ast(f)\tilde \omega_Y + \tilde \omega_E $ and defines $\scrF = \eta^\perp \cap \calT_A$. See Theorem~\ref{thm classtrbfol}.

We prove in \S~\ref{sec discreteinv} that the elliptic curve $E$ is a discrete invariant of $\scrF$, and clearly $d = \deg(\uy)$ is too. We thus find a bijection between turbulent foliations, with invariants $E$ and $d$, and points of a certain variety $\WW^\circ_{Y, d}$, fibered over $\Sym^d Y$, whose fiber over $\uy$ is the Zariski open set in $H^0(Y,\calO_Y(\uy))$ of functions with polar divisor exactly $\uy$. 

To determine which foliations, in terms of a point $(\uy, f)$ of $\WW^\circ_{Y, d}$, are $p$-closed, we recall the Cartier operator $\scrC$ and establish some of its functorial properties for which we have no adequate reference. Applying several theorems of Cartier, we are able to utilize the operator $\scrC$ to characterize $p$-closed foliations in Proposition~\ref{cor: when are foliations p closed}.

\medskip

\id {\bf Proposition.} \textit{Choose a basis $\{\tilde \omega_Y, \tilde \omega_E\}$ such that $\scrC_A$ is in normal form as in Lemma~\ref{lem:normal form}.  A turbulent foliation defined by $\eta = \tau^\ast(f\omega_Y) + \tilde \omega_E$ is $p$-closed if and only if one of the following holds:}
\begin{enumerate} 
\item \textit{Case I, $E$ ordinary and $\scrC_Y(f\omega_Y) = f\omega_Y$; equivalently, \cite[Proposition 7]{Cartier}, $f\omega_Y$ is a logarithmic differential, i.e. of the form $\frac{dg}{g}$ for a function $g \in k(Y)^\times$.}

\item \textit{Case I, $E$ supersingular and $\scrC_Y(f\omega_Y) = 0$; equivalently, \cite[Proposition 8]{Cartier}, $f\omega_Y$ is an exact differential, i.e. of the sort $dg$ for a function $g \in k(Y)$.}

\item \textit{Case $\text{II}_{ord}$, $E$ ordinary and $\scrC_Y(f\omega_Y) = f\omega_Y$.}

\item \textit{Case $\text{II}_{ssg}$ and $\scrC_Y(f\omega_Y) + \omega_Y =0$. In this case, one may write $\omega_Y = dh$ and then $f$ is a solution if and only if $f = -h^{p-1} + g_{p-2}^ph^{p-2}+ \cdots + g_0^p$ for any meromorphic functions $g_i \in k(Y)$.}
\end{enumerate}

\medskip

\id Further analysis allows us to determine when $p$-closed foliations exist as a function of the datum $(E, \delta)$. Our results are given in the following theorems (Theorem~\ref{p closed thm 1}, Theorem~\ref{p closed thm 2} and Theorem~\ref{thm: p closed tricky}). 

\medskip
\id {\bf Theorem.} \textit{Assume that $E$ is supersingular and we are in Case I (Proposition~\ref{cor: when are foliations p closed} (2)). Let $\delta$ be an effective divisor on $Y$ of degree $d\geq 2$. If the divisor $\delta = \sum_{P\in Y} n_P[P] = (dg/\omega_Y)_\infty$ for some function $g\in k(Y)$, then for every point~$P$, $n_P \not\equiv 1 \pmod{p}$. 
The converse holds when $\delta$ is not of the form $2[P]$, and also holds for $\delta = 2[P]$ if (and only if)~$Y$ is ordinary.}

\textit{Therefore, if $\delta$ is of the form $ \sum_{P\in Y} n_P[P] $ and, for some $P$, $n_P \equiv 1 \pmod{p}$, then any corresponding foliation defined by a point $(\delta, f) \in \WW^\circ_{Y, d}$ is  \emph{not} $p$-closed. If, on the other hand, $\delta$ is of the form $ \sum_{P\in Y} n_P[P] $ and, for all~$P$, $n_P \not\equiv 1 \pmod{p}$, then there exists a $p$-closed foliation defined by some point $(\delta, f) \in \WW^\circ_{Y, d}$, 
unless $\delta = 2[P]$ and~$Y$ is supersingular.}

\medskip
\id {\bf Theorem.}  \textit{Assume that $E$ is ordinary and we are in Case I or Case $\text{II}_{\text{ord}}$. Let $\delta$ be an effective divisor on~$Y$ of degree $d\geq 2$. If the divisor $\delta = \sum_{P\in Y} n_P[P] = (dg/g\omega_Y)_\infty$ for some function $g\in k(Y)$, then for every point~$P$, $n_P = 0$ or $1$.}

\textit{Therefore, if $\delta$ is an effective divisor of the form $ \sum_{P\in Y} n_P[P] $ and, for some $P$, $n_P > 1$, then \emph{any} corresponding foliation defined by a point $(\delta, f) \in \WW^\circ_{Y, d}$ is  \emph{not $p$-closed}.} 

\textit{If, on the other hand, $\delta$ is an effective divisor of the form $ \sum_{i=1}^r [P_i]$, then \emph{there is} a \emph{$p$-closed} foliation defined by a point $(\uy, f) \in \WW^\circ_{Y, d}$ such that $\uy =\delta$ if and only if there exist integers $a_1, \dots, a_r$, $(a_i, p) = 1$, $\sum_{i=1}^r a_i \equiv 0\pmod{p}$. Such integers exist if and only if $r>1$, and $r$ is even when $p=2$.}

\medskip

\id {\bf Theorem.}  \textit{Let $p>2$. Assume we are in Case $\text{II}_{\text{ssg}}$, and thus $Y$ is a supersingular elliptic curve. Let $\delta_2$ be the divisor $[Q_1]+[Q_2]+[Q_3]$, where the $Q_i$ are the nonzero $2$-torsion points of $Y$.} 

\textit{Let $\delta\geq 0$ be any effective divisor on $Y$ such that ${\rm supp}(\delta) \cap \delta_2 = \emptyset$. There is then a function $f\in k(Y)$ such that $\scrC_Y(f\omega_Y) +\omega_Y=0$ and $(f)_\infty = p((p-1)\delta_2 + \delta)$. Therefore, there are $p$-closed smooth turbulent foliations $\scrF$ on $A$ such that 
\[ \scrF \cong \calO_A(-D), \quad D = p\tau^\ast((p-1)\delta_2 + \delta).  \]}

\
 
\id \textit{\S2. Families of foliations.} \; By a general theory whose fundamentals go back to Grothendieck, the quotient sheaves $\calT_A \surjects N_\scrF$ of fixed Hilbert polynomial $P$ are parameterized by a so-called Quot scheme ${\rm Quot}^P$, and those such that the quotient sheaf $N_\scrF$ is locally free are classified by an open subscheme ${\rm Quot}^{P, \circ}$. In general, those Quot schemes are notoriously difficult to understand. In the first part of the paper, we have established a bijection between turbulent smooth foliations $\scrF$ and points of a variety $\WW^\circ_{Y, d}$. In this second part of the paper, we view $\WW^\circ_{Y, d}$ as a scheme and interpret its $R$-points (for $R$ a $k$-algebra) in terms of foliations, extending the set-theoretic bijection on $k$-points. Further, we construct a natural morphism $\rho\colon \WW^\circ_{Y, d} \arr {\rm Quot}^{P, \circ}$ and prove the following theorem (see \S \ref{W and quot}). 

\medskip

\id {\bf Theorem.} \textit{The morphism $\rho$ identifies $\WW^\circ_{Y, d}$ with an irreducible component of ${\rm Quot}^{P, \circ}$; this component is therefore smooth and irreducible of dimension $2d$, naturally fibered over $\Sym^d Y$ with geometric fibers isomorphic to the complement of finitely many codimension $1$ subspaces in $\AA^d_k$.}

\medskip 

As a result, we prove the following statement (Corollary~\ref{cor: 098245}). In light of this statement, the previously cited theorems can be viewed as characterizing the fibers of $\calZ_{Y,d} \arr \Sym^dY$.

\medskip 

\id {\bf Corollary.} \textit{Let $k$ be an algebraically closed field of characteristic $p$ and $A$ an abelian surface over $k$. There exists a closed subset $\calZ_{Y,d}$ of $\WW^\circ_{Y,d}$ parameterizing $p$-closed foliations $\scrF$ with invariants $E$ and $d$.} 

\

\id \textit{Remark.} The definition of the Hilbert polynomial we use in the paper and that appears in the theorem above is ad hoc. If one instead chooses once and for all a polarization on $A$, one likewise obtains Quot schemes ${\rm Quot}^P$ and ${\rm Quot}^{P, \circ}$ and we believe that our methods would show that every component of ${\rm Quot}^{P, \circ}$, as long as $P$ has a nonzero linear term, will be isomorphic to one of the spaces $\WW^{\circ}_{Y, d}$, and thus smooth and irreducible of dimension $2d$ (and the remaining Quot schemes are associated to linear foliations and thus there is only one remaining Quot scheme which is, in fact, isomorphic to $\PP^1$).

\

\id \textit{\S3. Applications.} \; Let $\scrF$ be a turbulent $p$-closed foliation on $A$. Associated to $\scrF$ there is a quotient variety $B = A/\scrF$ that factors Frobenius $A \arr B \arr A^{(p)}$. In Proposition~\ref{prop: geometry of the quotient of A by a foliation}, we provide some simple properties of $B$ showing that it is not an abelian variety. In particular, 
\[ \pi^\ast K_B = -(p-1) \scrF.\]
Such calculations of the canonical class are already in the literature. But in light of our results about $p$-foliations, we get here entire families of such quotients that are not abelian varieties, all lying between $A$ and $A^{(p)}$. In Proposition~\ref{Prop: albanese}, we characterize the Albanese variety of $B$:
\[ {\rm Alb}(B, 0_B) \cong A^{(p)}.\]
Finally, we use our results to construct certain $p$-closed foliations on Kummer $K3$ surfaces. $K3$ surfaces  over fields of characteristic $0$, or $p>3$, do not support smooth foliations. The foliations we construct have a unique singular point on each of the $16$ exceptional fibers of the Kummer surface, and each exceptional fiber is an integral subvariety. See Proposition~\ref{prop: foliations on kummer}.

\

\id {\bf Use of AI disclosure.} AI (ChatGPT)  was used in this paper for copy-editing tasks, such as spelling and grammar corrections, verification of hyperlinks, etc. We have also used ChatGPT to verify the mathematical correctness of the content. We challenged Claude to provide an alternative proof for the case $\delta = 2[P]$ in Theorem~\ref{p closed thm 1}. Its argument was then simplified by us and used to replace our original proof that had the drawback of using a lot of machinery. ChatGPT also helped us figure out the situation with integral curves in \S~\ref{sec integralcurves}.

\

\id {\bf Acknowledgments.} E. Z. Goren was partially supported by an NSERC Discovery Grant 223148. Both authors thank McGill University and Universit\`a Statale di Milano for their hospitality and excellent working conditions.  

\

\section{Foliations and $p$-closed foliations on an abelian surface in characteristic $p$}\label{sec:foliations generalities}

\subsection{Foliations} Let $k$ be an algebraically closed field and let $X$ be a non-singular variety over~$k$. Let $\calT_X$ denote its tangent bundle. A {\bf foliation} $\scrF$ on $X$ is a saturated involutive subsheaf of $\calT_X$, where by {\bf saturated} we mean that $N_\scrF:= \calT_X/\scrF$ is a torsion-free $\calO_X$-module, and by {\bf involutive} we mean that it is closed under the Lie bracket (coming from interpreting sections of $\calT_X$ as derivations). The involutive condition is automatic if $\scrF$ has rank $1$. The closed subset where $N_\scrF$ is not free is called the singularity of $\scrF$ and denoted ${\rm Sing}(\scrF)$. The following is well known but is included for convenience.

\begin{lem}\label{lem: reflexive and loc free} Let $X$ be a regular surface over $k$. Let $\scrF$ be a codimension $1$ foliation on $X$. Then $\scrF$ is a locally free sheaf, and ${\rm Sing}(\scrF)$ is of codimension at least $2$. 
\end{lem}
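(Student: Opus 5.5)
The plan is to show that $\scrF$ is reflexive; on a regular surface this forces local freeness. Then we control where the normal sheaf fails to be locally free.

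Since $\scrF$ is saturated, $N_\scrF = \calT_X/\scrF$ is torsion-free, so we have the exact sequence $0 \arr \scrF \arr \calT_X \arr N_\scrF \arr 0$ with $\calT_X$ locally free of rank $2$. Torsion-freeness of $N_\scrF$ makes $\scrF$ reflexive, by the standard argument: $\scrF \subseteq \scrF^{\vee\vee}$, and both sit inside $\calT_X$ (which is reflexive), agreeing off a codimension $2$ set. The image of $\scrF^{\vee\vee}/\scrF$ in $N_\scrF$ is therefore supported in codimension $\ge 2$, hence torsion, hence zero. So $\scrF = \scrF^{\vee\vee}$.

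Next, on a regular local ring $R$ of dimension $2$, a reflexive module $M$ is locally free. A reflexive module is a second syzygy: writing $M^\vee$ as a cokernel of free modules $F_1 \arr F_0 \arr M^\vee \arr 0$ and dualizing gives $0 \arr M \arr F_0^\vee \arr F_1^\vee$. By Auslander--Buchsbaum, ${\rm pd}(M) \le \max(0, 2-2) = 0$, since $M$ has depth $2$. So $M$ is free, and $\scrF$ is locally free, of rank $1$ since the foliation has codimension $1$.

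For the singular locus, note that at each codimension-$1$ point (a DVR local ring $\calO_{X,x}$), $N_{\scrF,x}$ is a finitely generated torsion-free module over a PID, hence free. Thus the non-free locus of the coherent sheaf $N_\scrF$, which is closed, contains no codimension $\le 1$ points. At the generic point $N_\scrF$ is a vector space. Hence ${\rm Sing}(\scrF)$ has codimension at least $2$, i.e.\ it is a finite set of closed points.

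The only mildly delicate step is the reflexivity argument: one must make sure the comparison of $\scrF$ with $\scrF^{\vee\vee}$ takes place inside $\calT_X$. This works because $\calT_X$ is reflexive, so $\scrF^{\vee\vee} \injects \calT_X^{\vee\vee} = \calT_X$. Alternatively, one could bypass reflexivity altogether with a depth computation. Since $\calT_X$ is free, the depth lemma gives ${\rm depth}\,\scrF_x \ge \min({\rm depth}\,\calT_{X,x}, {\rm depth}\, N_{\scrF,x}+1) \ge 2$, using ${\rm depth}\, N_{\scrF,x} \ge 1$ from torsion-freeness at closed points. Auslander--Buchsbaum then applies directly, and this is the route I would actually write.
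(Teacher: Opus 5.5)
Your proposal is correct and follows the paper's proof. Both arguments get reflexivity of $\scrF$ from the torsion-freeness of $N_\scrF = \calT_X/\scrF$, use that reflexive sheaves on a regular $2$-dimensional scheme are locally free, and bound ${\rm Sing}(\scrF)$ by noting that a torsion-free module over a DVR is free. The only difference is that the paper simply cites Hartshorne's Corollary 1.4 for the local-freeness step, whereas you prove it via Auslander--Buchsbaum; your preferred depth-lemma version also skips reflexivity altogether, so it is in effect a self-contained proof of that cited result in this case.
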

\begin{proof} Let $\scrF^{\scriptscriptstyle\vee} = \mathscr{H}\!\textit{om}(\scrF, \calO_X)$ be the dual sheaf to $\scrF$. There is a natural inclusion \[\scrF \injects \scrF^{{\scriptscriptstyle\vee}{\scriptscriptstyle\vee}} \injects \calT_X^{{\scriptscriptstyle\vee}{\scriptscriptstyle\vee}} = \calT_X,\] and the quotient sheaf $\scrF^{{\scriptscriptstyle\vee}{\scriptscriptstyle\vee}}/\scrF$ is torsion. Since $\scrF^{{\scriptscriptstyle\vee}{\scriptscriptstyle\vee}}/\scrF \injects \calT_X/\scrF$, which is torsion-free, $\scrF = \scrF^{{\scriptscriptstyle\vee}{\scriptscriptstyle\vee}}$ and, as for a coherent sheaf $\scrG$ on a normal scheme $\scrG^{\scriptscriptstyle\vee}$ is always reflexive \cite[Corollary 1.2]{HartshorneReflexive}, it follows that~$\scrF$ is a reflexive sheaf. By Corollary~$1.4$ of loc. cit., a reflexive sheaf on a regular scheme of dimension $2$ is locally free. 

The quotient sheaf $N_\scrF$ is a coherent torsion-free sheaf; the proof of Proposition 1.3 loc. cit. explains why $(N_\scrF)^{{\scriptscriptstyle\vee}{\scriptscriptstyle\vee}}/N_\scrF$ is supported in codimension $2$ (briefly, at points $x\in X$ such that $\dim \calO_{X, x} = 1$, torsion-free implies that $(N_\scrF)_x$ is free and identifies with $(N_\scrF^{{\scriptscriptstyle\vee}{\scriptscriptstyle\vee}})_x$).
\end{proof}

\id The subtlety in the rank $1$ case is thus whether $\scrF$ is a locally direct summand of $\calT_X$ or merely locally free. 

\subsection{Smooth foliations on abelian surfaces} Assume now that $A$ is an abelian surface and $\scrF$ is a smooth rank $1$ foliation on $A$, meaning that ${\rm Sing}(\scrF) = \emptyset$. We then have an exact sequence of vector bundles
\begin{equation}\label{eqn: foliation ses} \xymatrix{ 0 \ar[r]& \scrF \ar[r] & \calT_A \ar[r] & N_\scrF \ar[r]& 0,}
\end{equation}
and thus a dual sequence
\begin{equation}\label{eqn: dual to foliation ses} \xymatrix{ 0 \ar[r]& \scrF^\perp \ar[r] &\Omega^1_A \ar[r] & \scrF^\ast \ar[r]& 0,}
\end{equation}
where $\perp$ is taken with respect to the pairing between $\calT_A$ and $\Omega^1_A$ and $\ast$ denotes the dual line bundle. 

Since $A$ is an abelian variety, the sheaves $\calT_A$ and $\Omega^1_A$ are trivial vector bundles. In particular, it follows that $\calO_A \cong \det \Omega^1_A \cong \scrF^\perp \otimes \scrF^\ast$, and thus
\[ \scrF^\perp \cong N_\scrF^\ast\cong \scrF, \quad \scrF^\ast \cong N_\scrF.\]
Locally $\scrF$ is a trivial line bundle and thus over a non-empty open set $U$, $\scrF^\perp(U)$ has a nowhere vanishing section $\eta$ that we may view as a meromorphic differential form on $A$. Thus, over $U$ we have 
\[ \scrF = \eta^\perp,\]
and, conversely, every nonzero meromorphic differential $\eta$ on $A$ defines a foliation $\scrF$ on $A$; indeed, over the open set where $\eta$ is regular and nowhere vanishing $\scrF = \eta^\perp$ and it uniquely extends to a foliation on the whole of $A$. This relation establishes a bijection between foliations and meromorphic differentials considered up to multiplication by a nonzero meromorphic function. The following lemma is known to the experts and holds in much greater generality.

\begin{lem}The foliation $\scrF$ defined by a meromorphic differential $\omega$ is smooth if and only if at every point $x\in A$ there is a meromorphic function $f_x$ such that $f_x\omega$ is regular and non-vanishing on an open neighborhood of~$x$.
\end{lem}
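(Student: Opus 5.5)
The plan is to make precise how the foliation $\scrF$ attached to $\omega$ is built, and then compare the two conditions locally at each point. Fix a global basis $\omega_1,\omega_2$ of $\Omega^1_A$; since $\Omega^1_A$ is trivial, write $\omega = a\omega_1 + b\omega_2$ with $a,b\in k(A)$, not both zero. The foliation $\scrF$ is the saturation in $\calT_A$ of $\omega^\perp$, taken over the open set where $\omega$ is regular and nonvanishing. Equivalently, $\scrF^\perp\subseteq\Omega^1_A$ is the saturated rank-one subsheaf $\Omega^1_A\cap k(A)\omega$; saturation of $\scrF$ corresponds to saturation of $\scrF^\perp$ under the perfect pairing. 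Since $A$ is smooth of dimension $2$, $\scrF^\perp$ is a line bundle. Indeed, it is reflexive: it is the kernel of $\Omega^1_A\to(\Omega^1_A/\scrF^\perp)$ with torsion-free target, and we can apply the argument of Lemma~\ref{lem: reflexive and loc free}.

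\emph{Smooth $\Rightarrow$ condition.} Suppose $\scrF$ is smooth. Then $N_\scrF$ is locally free and \eqref{eqn: dual to foliation ses} is an exact sequence of vector bundles, so $\scrF^\perp$ is locally a direct summand of $\Omega^1_A$. At a point $x$, choose a local generator $\eta$ of $\scrF^\perp$ on a neighborhood $U$. Being a generator of a subbundle, $\eta$ is nonvanishing at every point of $U$: its image in the fiber $\Omega^1_A\otimes k(y)$ is nonzero. Since $\eta\in k(A)\omega$, we have $\eta=f_x\omega$ for some meromorphic $f_x$, which gives the condition.

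\emph{Condition $\Rightarrow$ smooth.} Conversely, suppose $f_x\omega=\eta_x$ is regular and nowhere vanishing on a neighborhood $U_x$. Write $\eta_x=\alpha\omega_1+\beta\omega_2$ with $\alpha,\beta\in\calO(U_x)$ having no common zero. The key step is to show that $\scrF^\perp|_{U_x}=\calO_{U_x}\eta_x$. One inclusion is clear. For the other, a local section of $\scrF^\perp$ has the form $g\eta_x=g\alpha\,\omega_1+g\beta\,\omega_2$ with $g\in k(A)$ and $g\alpha,g\beta$ regular. Since $(\alpha,\beta)$ generate the unit ideal locally, we can write $1=u\alpha+v\beta$, so $g=u(g\alpha)+v(g\beta)$ is regular. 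Hence $\calO_{U_x}\eta_x$ is saturated, the quotient $\Omega^1_{U_x}/\calO\eta_x$ is locally free of rank one (split by $(u,v)$), and dually $\scrF|_{U_x}=\eta_x^\perp$ is a subbundle with locally free quotient $N_\scrF$. Thus ${\rm Sing}(\scrF)\cap U_x=\emptyset$, and covering $A$ by the $U_x$ gives smoothness.

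The only delicate point is the identification of $\scrF$, the \emph{saturated} extension, with $\eta_x^\perp$ near $x$, which is needed so that smoothness is a property of $\omega$ rather than of an arbitrary choice of extension. The unit-ideal argument above handles exactly this. Everything else is the standard duality between subbundles of $\calT_A$ and of $\Omega^1_A$ furnished by \eqref{eqn: foliation ses} and \eqref{eqn: dual to foliation ses}.
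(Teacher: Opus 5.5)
Your proof is correct, and one of its two directions takes a genuinely different route from the paper's.

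\textbf{Condition $\Rightarrow$ smooth.} This is essentially the paper's ``first case''. A coefficient pair with no common zero gives $1=u\alpha+v\beta$, so $D\mapsto\langle D,\eta_x\rangle$ is a surjection $\calT_{U_x}\to\calO_{U_x}$ with kernel $\scrF|_{U_x}$. Hence $N_\scrF\cong\calO_{U_x}$. Your detour through $\scrF^\perp|_{U_x}=\calO_{U_x}\eta_x$ is harmless but not needed here.

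\textbf{Smooth $\Rightarrow$ condition.} Here you depart from the paper.
\begin{itemize}
\item The paper first rescales $\omega$ locally to $f_1\omega_1+f_2\omega_2$, with $f_1,f_2$ regular and without common divisor, so that their common zero locus is at most $\{x\}$. It then argues by cases. If $f_1,f_2$ vanish together at $x$, the map $a_1\partial_1+a_2\partial_2\mapsto a_1f_1+a_2f_2$ identifies $N_\scrF$ with the ideal $(f_1,f_2)\subseteq\calO_U$. This ideal has cokernel supported at $x$, so it cannot be invertible.
\item The paper finally asserts that the condition of the lemma holds exactly in the no-common-zero case. Justifying this needs a small coprimality argument in the UFD $\calO_{A,x}$, which the paper leaves implicit.
\item You instead dualize. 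Smoothness makes (\ref{eqn: dual to foliation ses}) an exact sequence of vector bundles. So $\scrF^\perp=N_\scrF^\ast\subseteq\Omega^1_A\cap k(A)\omega$ is a subbundle, and any local generator of it is the required $f_x\omega$.
\end{itemize}

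\textbf{Comparison.} Your route is shorter. It bypasses the normal form, the non-freeness argument for an ideal of finite colength, and the unstated coprimality step. The paper's route gives more information: an explicit local description of $N_\scrF$ at a singular point as an ideal sheaf of finite colength. That shows directly that the singularities of the foliation attached to $\omega$ are isolated.
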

\begin{proof} Locally, at a point $x\in A$, after rescaling $\omega$ we may write $\omega = f_1\omega_1 + f_2\omega_2$, where $\omega_i$ are regular differentials and $f_i$ are regular functions on an open set $U$ containing $x$, where the intersection of their divisors $(f_1)\cap (f_2)$ is either the empty set or supported on the point $x$. 

In the first case, after possibly shrinking $U$ further, we can write $1 = g_1 f_1+g_2f_2$ for some regular functions $g_i$ on $U$ and we see that $\omega^\perp$ is free on $U$, generated by $f_2\partial_1 - f_1\partial_2$, where $\{\partial_i\}$ form the basis for $\calT_A$ dual to $\{\omega_i\}$ and $N_\scrF = (\oplus_i\calO_U\partial_i)/\calO_U(f_2\partial_1 - f_1\partial_2)$ is free too; it is isomorphic to $\calO_U$, where the isomorphism takes $a_1\partial_1 + a_2\partial_2$ to $a_1f_1 + a_2f_2$.

In fact, this description of $\omega^\perp$ is valid in the second case too, except that $N_\scrF$ is not free at $x$. The same map embeds $N_\scrF$ as a certain subsheaf of functions of $\calO_U$ that have the property that they all vanish at $x$ to a certain order determined by $f_1, f_2$, but the quotient sheaf $\calO_U/N_\scrF$ is supported on $\{x\}$ and therefore $N_\scrF$ is not locally free (if it were, then locally all its sections would vanish on a curve through $x$). 

It only remains to note that we are in the first case precisely if the condition stated in the lemma holds. 
\end{proof}

Choose a trivialization $\calT_A \cong \calO_A^2$, arbitrarily at this stage; later we will be more precise about the choice. The incarnation of $\PP^1_k$ as a classifying space (in fact, a special kind of ${\rm Quot}$ scheme) implies that there is a {\bf classifying morphism} 
\[\Phi = \Phi_\scrF\colon A \arr \PP^1,\] 
such that 
\[ (\calT_A \arr N_\scrF) \cong \Phi^\ast( \calO_{\PP^1}^2 \arr \calO_{\PP^1}(1)).\]
Equivalently, 
\[ (\scrF\arr \calT_A ) \cong \Phi^\ast( \calO_{\PP^1}(-1) \arr \calO_{\PP^1}^2 ).\]

\

\subsection{Turbulent foliations}\label{sec turbfol} A {\bf linear} foliation $\scrF$ on an abelian variety $A$ is a foliation that is given by a trivial line bundle $\scrF \injects \calT_A$; equivalently, choosing a $1$-dimensional subspace $V\subset \calT_{A,0}$ of the tangent space at the origin, $\scrF$ is the unique subsheaf of $\calT_A$ whose fiber at a closed point $x\in A(k)$ is $T_x(V)\subset \calT_{A, x}$, where $T_x\colon A \arr A$ is the translation-by-$x$ morphism $T_x(y) = x+y$. Such foliations are well-understood--see further discussion in \S~\ref{subsec:linear p closed} below.

A rank $1$ smooth foliation that is not linear is called {\bf turbulent}. Our main interest is in turbulent foliations. The following lemma is clear. 

\begin{lem} A smooth foliation $\scrF$ on $A$ is turbulent if and only if the classifying morphism $\Phi_\scrF\colon A \arr \PP^1$ is dominant. 
\end{lem}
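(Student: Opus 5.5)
Both directions come down to comparing the trivial line bundle with $\Phi^\ast\calO_{\PP^1}(-1)$ via the identification $(\scrF\arr\calT_A)\cong\Phi^\ast(\calO_{\PP^1}(-1)\arr\calO_{\PP^1}^2)$. Since $A$ is a proper irreducible variety, $\Phi(A)$ is closed and irreducible in $\PP^1$. So $\Phi$ is either dominant (hence surjective) or constant with image a single closed point. I will therefore prove the equivalent statement: $\scrF$ is linear if and only if $\Phi_\scrF$ is constant.

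\emph{Constant implies linear.} Suppose $\Phi$ is constant with value $[a:b]$. Then $\Phi$ factors through $\Spec k \arr \PP^1$, and pulling back gives
\[ (\scrF\arr\calT_A)\cong \bigl(\calO_A\cdot(a,b)\arr\calO_A^2\bigr). \]
Hence $\scrF$ is a trivial line subbundle, namely the one spanned by the constant vector field $a\partial_1+b\partial_2$. To see that this is linear in the sense of the definition, I will fix the trivialization of $\calT_A$ by translation-invariant vector fields. This is possible because $\calT_A\cong\calO_A\otimes\Lie(A)$, and the global sections of $\calT_A$ are exactly the invariant fields, since $H^0(A,\calO_A)=k$. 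Then any trivial subbundle $\calO_A\injects\calT_A$ that is a direct summand is given by a nowhere-vanishing global section. Such a section is a constant vector $v\in\Lie(A)$, and it spans the subspace $V=kv$.

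\emph{Linear implies constant.} Suppose $\scrF\cong\calO_A$. Then $\scrF$ is generated by a global section $s$ of $\calT_A$, and $s$ is nowhere vanishing because $\scrF$ is a subbundle. Writing $s=(s_1,s_2)$ in the trivialization, each $s_i\in H^0(A,\calO_A)=k$ is constant. The classifying map sends $x$ to the line $\scrF_x\subset k^2$, which is $[s_1:s_2]$ for every $x$, so $\Phi$ is constant.

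There is one subtlety: the definition of $\Phi$ depends on the chosen trivialization of $\calT_A$. Any two trivializations differ by an element of $GL_2(H^0(\calO_A))=GL_2(k)$, which changes $\Phi$ by an automorphism of $\PP^1$. Dominance is unaffected by this, so the statement holds for any choice of trivialization. I do not expect any real obstacle here. The only point needing care is the remark that global sections of $\calO_A$ are constants, which makes both "trivial subbundle" and "constant classifying map" mean the same thing, namely a constant line in $k^2$.
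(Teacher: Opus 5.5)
Your proof is correct, and it is exactly the routine verification the paper leaves to the reader when it calls this lemma clear. Linearity means $\scrF\cong\calO_A$; because $H^0(A,\calO_A)=k$, this is equivalent to $\Phi_\scrF$ being constant, and a non-constant morphism from the irreducible proper surface $A$ to $\PP^1$ is surjective (the one tacit step, that a constant $\Phi_\scrF$ factors through $\Spec k$, uses only that $A$ is reduced).
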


\id Recall the following fact \cite[Proposition 2.20]{EvdGM}, \cite[Proposition A.7.1.6]{HindrySilverman}.
\begin{prop} Let $A$ be an abelian variety over an algebraically closed field $k$. Let
$f\colon A \arr X$ be a morphism of $k$-varieties. For $a\in A$, let $C_a$ denote the connected component of
the fiber over $f(a)$ such that $a\in C_a$, and write $F_a$ for the reduced scheme underlying $C_a$. Then
$F_0$ is an abelian subvariety of $A$ and $F_a = T_a(F_0) = a+ F_0$ for all $a\in A(k)$.
\end{prop}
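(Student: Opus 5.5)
The plan is the classical rigidity argument. The key tool is the Rigidity Lemma: if $g\colon V\times W\arr Z$ is a morphism of varieties with $V$ complete, and $g$ collapses $V\times\{w_0\}$ to a point for some $w_0$, then $g$ factors through the projection to $W$. We may assume $X$ is separated; otherwise we replace $X$ by an affine neighbourhood argument, or by its separated quotient, which does not change the fibers of $f$ set-theoretically.

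\textbf{Step 1: $F_0$ is a group.}
\begin{itemize}
\item[(a)] Since $A$ is complete, $F_0$ is a complete, connected, reduced closed subvariety containing $0$.
\item[(b)] Consider the morphism $g\colon F_0\times A\arr X$, $g(x,a)=f(x+a)$.
\item[(c)] For $a=0$ the slice $F_0\times\{0\}$ maps to the single point $f(0)$.
\item[(d)] By rigidity, $g(x,a)=g(0,a)$, that is, $f(x+a)=f(a)$ for all $x\in F_0$ and $a\in A$.
\item[(e)] Hence for each $a$ the connected set $a+F_0$ lies in the fiber over $f(a)$ and contains $a$, so $a+F_0\subseteq F_a$.
\item[(f)] Taking $a\in F_0$, we get $a+F_0\subseteq F_0$, so $F_0$ is closed under addition.
\item[(g)] For $x\in F_0$, the set $-x+F_0$ is connected, contains $0$, and lies in the fiber over $f(-x)=f(0)$, by (d) applied with $a=-x$. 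Hence $-x+F_0\subseteq F_0$ and $-x\in F_0$.
\end{itemize}
So $F_0$ is a closed, connected, reduced subgroup variety of $A$. A closed reduced connected subgroup of an abelian variety is complete and smooth, since it is a reduced group scheme over an algebraically closed field. It is therefore an abelian subvariety.

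\textbf{Step 2: $F_a=a+F_0$.}
\begin{itemize}
\item[(a)] Step 1 gives $a+F_0\subseteq F_a$.
\item[(b)] Apply the same argument to the translated morphism $f\circ T_a$. Its connected fiber through $0$ is $-a+F_a$.
\item[(c)] Step 1 applied to $f\circ T_a$ shows that $-a+F_a$ is an abelian subvariety $F_0'$.
\item[(d)] The argument of Step 1(d) gives $f(x+b)=f(a+b)$, where $x\in F_a$ and $b$ is arbitrary. Taking $b=-a$ gives $f(x-a)=f(0)$ for all $x\in F_a$.
\item[(e)] So $-a+F_a$ is connected, contains $0$, and lies in the fiber over $f(0)$. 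Hence $-a+F_a\subseteq F_0$.
\end{itemize}
Combining (a) and (e) gives $F_a=a+F_0$.

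\textbf{Main obstacle.} The main point requiring care is the Rigidity Lemma itself. It is standard (Mumford, \emph{Abelian Varieties}, \S4) and needs only the completeness of $F_0$, the connectedness of $A$, and the separatedness of $X$.

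A second subtlety is that the fiber may be non-reduced. This is harmless, because we work with the reduced structure throughout and only set-theoretic containments are used.
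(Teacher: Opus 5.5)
Your argument is correct and is the standard rigidity-lemma proof of this fact; the paper gives no proof of its own and simply quotes the result from \cite[Proposition 2.20]{EvdGM} and \cite[Proposition A.7.1.6]{HindrySilverman}. Two small cleanups: first, the aside about non-separated $X$ is unnecessary, since varieties are separated by convention, and the ``separated quotient'' justification should not be relied on. Second, you apply rigidity with $F_0$ known only to be complete, connected and reduced, not irreducible; this is fine because the proof of the lemma uses no more than that, or you can apply the lemma to each irreducible component of $F_0$ and chain through their intersections.
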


Let $\scrF$ be a turbulent foliation and let
\[ E = E_\scrF: = F_0 = (\Phi^{-1}\Phi(0))^\circ;\]
namely, the connected component of the unique fiber of $\Phi$ that contains $0$, with the reduced subscheme structure. Since $\scrF$ is turbulent, $\Phi_\scrF$ is dominant and it follows that $E$ is an elliptic curve. 

\begin{cor}\label{cor: turbulent implies non-simple} An abelian surface $A$ that has a turbulent foliation is not simple. 
\end{cor}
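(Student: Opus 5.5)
The plan is to exhibit a nonzero proper abelian subvariety of $A$, namely the curve $E = E_\scrF$ just constructed. Let $\scrF$ be a turbulent foliation on $A$ and $\Phi = \Phi_\scrF\colon A \arr \PP^1$ its classifying morphism. The lemma characterizing turbulent foliations shows that $\Phi$ is dominant. Because $A$ is proper, $\Phi$ is then surjective onto $\PP^1$.

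First I will compute the dimension of $E$. The fiber $\Phi^{-1}\Phi(0)$ is a proper closed subset of $A$, since $\Phi$ is not constant. By upper semicontinuity of fiber dimension, together with the fact that every component of a fiber of a dominant morphism from a surface to a curve has dimension at least $\dim A - \dim \PP^1 = 1$, every irreducible component of $\Phi^{-1}\Phi(0)$ has dimension exactly $1$. In particular $C_0$, the connected component containing $0$, is one-dimensional, and so is its reduction $F_0$.

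Next I will apply the proposition quoted from \cite{EvdGM} and \cite{HindrySilverman}. It says that $F_0$ is an abelian subvariety of $A$. By the previous step it has dimension $1$, so $E = F_0$ is an elliptic curve inside $A$. It is therefore nonzero and proper, since $0 < \dim E = 1 < 2 = \dim A$, and $A$ is not simple. If one prefers, Poincaré reducibility also gives a complementary elliptic curve, and hence an isogeny $A \sim E \times A/E$, with $Y = A/E$ as in the introduction.

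The only point needing care is the dimension count, namely that the connected fiber component through $0$ is neither a finite set of points nor all of $A$. Both possibilities are ruled out by the surjectivity of $\Phi$ onto the curve $\PP^1$ and the dimension theory of fibers, so I expect no real obstacle.
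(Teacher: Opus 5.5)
Your proposal is correct and is essentially the paper's argument: turbulence makes $\Phi_\scrF$ dominant, and the quoted proposition from \cite{EvdGM}, \cite{HindrySilverman} then shows that the reduced connected fiber component $E_\scrF = F_0$ through the origin is a one-dimensional abelian subvariety of $A$, that is, an elliptic curve. The only difference is that you spell out the fiber-dimension count (each component of $\Phi^{-1}\Phi(0)$ has dimension exactly $1$), which the paper leaves implicit.
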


We let $Y = Y_\scrF := A/E_\scrF$ and denote by $\tau = \tau_\scrF$ the morphism 
\[ \xymatrix{A \ar[r]^\tau & Y}.\]
This map $\tau$ provides the normal reduction of $N_\scrF$ in the language of Brunella \cite{Brunella}; there is a commutative diagram
\[\xymatrix{A \ar[rr]^\Phi \ar[dr]_\tau& & \PP^1\\ & Y\ar[ur]_\Psi  &}\]
where $\Psi$ is a finite morphism. 

\begin{prop} We can describe the turbulent foliation $\scrF$ as follows:
\begin{enumerate}
\item There is a meromorphic differential $\eta\in \scrF^\perp$ such that 
\[ \eta = \tau^\ast \eta_Y + \eta_{\rm hol}, \]
where $\eta_Y$ is a meromorphic differential on the elliptic curve $Y$ and $\eta_{\rm hol}$ is a holomorphic differential on $A$.
\item Let $\underline{\omega}_A$ be the $k$-vector space of holomorphic differentials on $A$, identified with the cotangent space of $A$ at the origin $0$; similarly for $E$ and $Y$. There is an exact sequence 
\[\xymatrix{0 \ar[r]& \underline{\omega}_Y\ar[r]& \underline{\omega}_A\ar[r] & \underline{\omega}_E \ar[r] & 0.}\]
\item Choose a nonzero holomorphic differential $\omega_Y$ on $Y$, viewed as a holomorphic nonzero differential $\tilde{\omega}_Y$ on $A$, and a nonzero holomorphic differential $\tilde{\omega}_E$ on $A$, pulling back to a nonzero holomorphic differential $\omega_E$ on $E$. Then we can write $\scrF$ uniquely as 
\[ \scrF = \eta^\perp, \qquad \eta = \tau^\ast(f\omega_Y) + \tilde{\omega}_E, \]
where $f$ is a non-constant function on $Y$.
\end{enumerate}
\end{prop}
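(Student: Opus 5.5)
Part (2) comes first, since it uses only the structure of $E \subset A \to Y$. The sequence $0 \to E \to A \xrightarrow{\tau} Y \to 0$ is an exact sequence of abelian varieties. Because $E$ is reduced, hence smooth, it induces an exact sequence of tangent spaces at the origin $0 \to T_0E \to T_0A \to T_0Y \to 0$. The map $T_0A \to T_0 Y$ is surjective because $\tau$ is smooth: its fibers are translates of the smooth curve $E$, and $\tau$ is flat. Dualizing gives the stated sequence of invariant differentials, with $\underline{\omega}_Y \to \underline{\omega}_A$ given by $\tau^\ast$ and $\underline{\omega}_A \to \underline{\omega}_E$ given by restriction.

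For (1) and (3), fix a trivialization of $\calT_A$ dual to the basis $\{\tilde\omega_Y,\tilde\omega_E\}$ of $\underline{\omega}_A$ from (2). With respect to this trivialization, a generator of $\scrF^\perp$ at a point is $\Phi^\ast$ of the tautological line. Concretely, write $\Phi = [\phi_1:\phi_2]$, where $\phi_i$ are meromorphic functions on $A$. Then $\scrF^\perp$ is generated locally by $\phi_1\tilde\omega_Y+\phi_2\tilde\omega_E$, up to the convention identifying $\scrF^\perp$ with the pullback of the tautological sub-line-bundle. Since $\Phi = \Psi\circ\tau$, the ratio $\phi_1/\phi_2$ equals $\tau^\ast f$ for the rational function $f$ on $Y$ obtained by composing $\Psi$ with the affine coordinate of $\PP^1$. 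Hence $\eta := \tau^\ast(f)\tilde\omega_Y + \tilde\omega_E$ is a meromorphic section of $\scrF^\perp$, and $\tau^\ast(f)\tilde\omega_Y = \tau^\ast(f\omega_Y)$. This gives (1) with $\eta_Y = f\omega_Y$ and $\eta_{\rm hol}=\tilde\omega_E$, and $f$ is nonconstant because $\Phi$ is dominant.

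One must check that $\phi_2 \not\equiv 0$, i.e.\ that $\Phi$ does not map into $\infty$; this holds because $\Phi$ is dominant. A possible subtlety is that the coordinate $\infty$ could be attained where $\scrF = \partial_{E}$-direction. This is harmless, since we only need a meromorphic generator.

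Uniqueness in (3) follows because $\eta$ is determined up to multiplication by a meromorphic function on $A$. Normalizing the coefficient of $\tilde\omega_E$ to be $1$ forces the multiplier to be $1$, so the coefficient of $\tilde\omega_Y$ is a uniquely determined meromorphic function on $A$. This function is $\tau^\ast f$, and $f$ is unique because $\tau^\ast$ is injective on function fields.

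I expect the main obstacle to be matching the convention for $\Phi$ with the coordinates on $\PP^1$. One has to verify that the affine coordinate of $\Phi$, read in the basis dual to $(\tilde\omega_Y,\tilde\omega_E)$, is exactly the coefficient ratio of the annihilating form, possibly up to sign or inversion. If needed, replace $f$ by $-f$ or swap the roles of the coordinates, using that $\tilde\omega_E$ is not pulled back from $Y$. The essential point is that the function is constant on fibers of $\tau$, which follows from $\Phi=\Psi\circ\tau$.
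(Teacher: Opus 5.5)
Your proof is correct. It follows a somewhat more direct route than the paper's, though both hinge on the factorization $\Phi=\Psi\circ\tau$.

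\textbf{Your route.} You pass to the frame of $\calT_A$ dual to $\{\tilde\omega_Y,\tilde\omega_E\}$ and write $\Phi=[\phi_1:\phi_2]$, with $\phi_1,\phi_2\in H^0(A,N_\scrF)$ the images of that frame. You then read off $\eta=(\phi_1/\phi_2)\tilde\omega_Y+\tilde\omega_E$ directly. The factorization makes $\phi_1/\phi_2=\tau^\ast f$, where $f$ is the $\Psi$-pullback of the affine coordinate.

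\textbf{The paper's route.} It uses the split sequence $0\to\tau^\ast\Omega^1_Y\to\Omega^1_A\to\underline{\omega}_E\otimes_k\calO_A\to 0$ and twists it by $N_\scrF=\tau^\ast N_Y$; this is where the factorization enters. The projection formula $H^0(A,\tau^\ast L)\cong H^0(Y,L)$ then shows that the nowhere vanishing section of $\Omega^1_A\otimes N_\scrF$ defining $\scrF$ has the form $a\tilde\omega_Y+b\tilde\omega_E$ with $a,b\in H^0(Y,N_Y)$, so $f=a/b$.

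\textbf{Comparison.} Your version avoids the global-sections computation. The paper's version tracks the line bundle and so records at once that $(f)_\infty=\delta$ when $N_Y=\calO_Y(\delta)$. That fact is what feeds $\scrF\cong\calO_A(-\tau^\ast(f)_\infty)$ in Theorem~\ref{thm classtrbfol}. You recover it as well, from $N_\scrF=\Phi^\ast\calO_{\PP^1}(1)=\tau^\ast\calO_Y(\Psi^\ast[\infty])$ and $\Psi^\ast[\infty]=(f)_\infty$.

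\textbf{Two points to make explicit.}
\begin{enumerate}
\item $\Phi$, and hence $E$, was defined using an arbitrary frame. Since $H^0(A,\calO_A)=k$, any two global frames of $\calT_A$ differ by a constant matrix. Switching to your frame therefore changes $\Phi$ only by an element of $\mathrm{PGL}_2(k)$, which leaves $E$ and the factorization through $\tau$ intact.
\item Your worry about conventions is unnecessary. With the paper's normalization $(\calT_A\to N_\scrF)\cong\Phi^\ast(\calO_{\PP^1}^2\to\calO_{\PP^1}(1))$, the dual inclusion $N_\scrF^\ast\hookrightarrow\Omega^1_A$ sends a local generator $\nu$ to $\nu(\phi_1)\tilde\omega_Y+\nu(\phi_2)\tilde\omega_E$. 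That is exactly your form. Any other convention would only replace $f$ by a M\"obius transform of itself, which is still a function on $Y$.
\end{enumerate}
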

\begin{proof} In essence, this proof is the one in \cite{Brunella}. The exact sequence of abelian varieties
\[ \xymatrix{0 \ar[r] & E \ar[r]^\iota &A \ar[r]^\tau & Y \ar[r] & 0}\]
yields a sequence of Lie algebras
\[ \xymatrix{0 \ar[r] & \Lie(E) \ar[r] & \Lie(A) \ar[r] & \Lie(Y) \ar[r] & 0.}\]
As $\iota$ is a closed immersion, the map $\Lie(E) \arr \Lie(A)$ is injective, and as $\tau$ is a smooth morphism, the map $\Lie(A) \arr \Lie(Y)$ is surjective. Comparison of dimensions implies the sequence $ 0 \arr \Lie(E) \arr \Lie(A) \arr \Lie(Y) \arr 0$ is exact, and so is the dual sequence 
\begin{equation}\label{eqn: exact on diff} 
\xymatrix{0 \ar[r] & \underline{\omega}_Y \ar[r] & \underline{\omega}_A \ar[r] & \underline{\omega}_E \ar[r] & 0.} 
\end{equation}
Using again that $\tau$ is a smooth morphism we have an exact sequence of free sheaves:
\begin{equation}
     \xymatrix{0 \ar[r] & \tau^\ast \Omega^1_Y \ar[r] & \Omega^1_A \ar[r] & \Omega^1_{A/Y} \ar[r] & 0.}
     \end{equation}
We claim that we can identify $\Omega^1_{A/Y}$ with $\underline{\omega}_E \otimes_k \calO_A$. To do that, choose a non-vanishing global regular differential $\omega_Y$ on $Y$ and view it as a differential $\tilde{\omega}_Y$ on $A$ via the map $\tau^\ast$. Let $\tilde{\omega}_E$ be a global regular differential on $A$ such that $\omega_E:= \iota^\ast \tilde{\omega}_E$ is a nonzero differential on $E$. Then, $\Omega^1_A = \calO_A \tilde\omega_Y \oplus \calO_A \tilde\omega_E$ and that can also be written as $\left(\underline{\omega}_Y \otimes_k \calO_A\right) \oplus \left(\underline{\omega}_E \otimes_k \calO_A\right)$, where the first summand is identified with $\tau^\ast \Omega^1_Y$ and the second with $\Omega^1_{A/Y}$.  To summarize, we have a split exact sequence
 \begin{equation}\label{equ: exact on sheaves of diff} 
 \xymatrix{0 \ar[r]& \tau^\ast \Omega^1_Y \ar[r]& \Omega^1_A \ar[r]& \underline{\omega}_{E}\otimes_k \calO_A \ar[r]& 0.}
 \end{equation}
 Of course, taking its global sections gives again (\ref{eqn: exact on diff}). 

 On the other hand, we have the relation 
 \[ N_\scrF = \tau^\ast N_Y,\]
 where $N_Y = \Psi^\ast \calO_{\PP^1}(1)$ (it is an ample line bundle on $Y$ that Brunella calls the normal reduction of $N_\scrF$). Twist the exact sequence (\ref{equ: exact on sheaves of diff}) by tensoring it with the locally free sheaf $N_\scrF$ to get a split exact sequence
  \begin{equation}\label{equ: twisted exact on sheaves of diff} 
  \xymatrix{0 \ar[r]& \tau^\ast \Omega^1_Y\otimes \tau^\ast N_Y \ar[r]& \Omega^1_A\otimes \tau^\ast N_Y \ar[r]& \underline{\omega}_{E}\otimes_k \tau^\ast N_Y \ar[r]& 0.}
 \end{equation}
Now, for any coherent sheaf $L$ on $Y$, the projection formula gives $H^0(A, \tau^\ast L) \cong H^0(Y, \tau_\ast\tau^\ast L)$. However, as the fibers of $\tau$ are connected and projective (all isomorphic to $E$, in fact) $\tau_\ast\calO_A = \calO_Y$ and so if $L$ is locally free, $\tau_\ast \tau^\ast L \cong L$ and we find that $H^0(A, \tau^\ast L) \cong H^0(Y, L)$. Let us then take global sections of (\ref{equ: twisted exact on sheaves of diff}), using that the sequence splits, to find an exact sequence 
\[ \xymatrix{0 \ar[r] & H^0(A, \tau^\ast (\Omega^1_Y \otimes N_Y)) \ar[r]\ar@{=}[d] &  H^0(A, \Omega^1_A \otimes N_\scrF) \ar[r] &  \underline{\omega}_{E}\otimes_k H^0(A, \tau^\ast N_Y)\ar@{=}[d] \ar[r] &  0\\
 & H^0(Y, \Omega^1_Y \otimes N_Y) & & \underline{\omega}_{E}\otimes_k H^0(Y, N_Y) &
} \]
That means that any global section in $H^0(A, \Omega^1_A \otimes N_\scrF)$ can be written as $a \tilde\omega_Y + b \tilde\omega_E$, with $a, b \in H^0(Y, N_Y)$ (identifying $a$ with $\tau^\ast a$, etc.). The point of all this is that to give a foliation $\scrF \injects \calT_A$ is the same as giving the surjection $\calT_A \arr N_\scrF$, which is the same as giving the injection $N_\scrF^\ast \injects \Omega^1_A$ (cf. (\ref{eqn: foliation ses}), (\ref{eqn: dual to foliation ses})). That, in turn, is equivalent to giving a nowhere vanishing section of $H^0(A, \Omega^1_A \otimes N_\scrF)$, which has the form $a \tilde\omega_Y + b \tilde\omega_E$. Returning to meromorphic differentials, this means that the foliation $\scrF$ is defined by the meromorphic differential 
\[ \eta = \tau^\ast (f \omega_Y) + \tilde\omega_E,\]
where $f$ is a function such that $fb = a$ (such a function exists as the turbulent condition implies $b\neq 0$), and writing $N_Y = \calO_Y(\delta)$ one has $(f)_\infty = \delta$.
\end{proof}

\subsection{Classification of turbulent foliations}
\begin{thm}\label{thm: classfication of foliations} Smooth turbulent foliations on an abelian surface $A$ are in $1:1$ correspondence with the data $(E, f)$, where:
\begin{enumerate}
\item $E$ is an elliptic curve $E \subseteq A$ (passing through the origin);
\item $f$ is a non-constant meromorphic function on the elliptic curve $Y = A/E$.
\end{enumerate}
\end{thm}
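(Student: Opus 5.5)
The proof constructs maps in both directions between turbulent foliations and pairs $(E,f)$, and then checks that they are mutually inverse. Throughout, we fix once and for all, for each elliptic curve $E\subseteq A$, the auxiliary choices $\omega_Y$, $\tilde\omega_Y=\tau^\ast\omega_Y$ and $\tilde\omega_E$ of the preceding Proposition. The point of fixing them is that the uniqueness statement there refers to them.

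\emph{From foliations to data.} Given a turbulent $\scrF$, let $E=E_\scrF$ be the connected component through $0$ of the fiber of $\Phi_\scrF$. By the fact recalled from \cite{EvdGM} it is an elliptic curve, and it is intrinsic to $\scrF$. One point needs checking: $\Phi_\scrF$ depends on the chosen trivialization of $\calT_A$, but changing that trivialization composes $\Phi_\scrF$ with an automorphism of $\PP^1$. This does not change the fibers, so $E$ is well defined. The preceding Proposition, part (3), then gives a unique non-constant $f\in k(Y)$ with $\scrF=\eta^\perp$, where $\eta=\tau^\ast(f\omega_Y)+\tilde\omega_E$. For uniqueness, note that $\eta$ is determined up to a meromorphic multiple. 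Normalizing the coefficient of $\tilde\omega_E$ to be $1$ leaves no further freedom, and $\tilde\omega_Y,\tilde\omega_E$ form a basis of $\Omega^1_A$.

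\emph{From data to foliations.} Given $(E,f)$, set $\eta=\tau^\ast(f\omega_Y)+\tilde\omega_E$ and $\scrF=\eta^\perp$. It is a foliation since every rank-one saturated subsheaf is involutive. Smoothness is checked with the Lemma characterizing smooth foliations by local non-vanishing rescalings.
\begin{itemize}
\item Away from $\tau^{-1}((f)_\infty)$, the form $\eta$ is already regular and nowhere zero, because its $\tilde\omega_E$-coefficient is $1$.
\item Near a point over a pole $y$ of order $n$, take $t$ a local parameter at $y$ and multiply by $\tau^\ast t^n$. The result is $\tau^\ast(t^nf)\tilde\omega_Y+\tau^\ast(t^n)\tilde\omega_E$, whose first coefficient is a unit there. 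So it is regular and non-vanishing.
\end{itemize}
Hence $\scrF$ is smooth.

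\emph{Turbulence and recovering the data.} This is the main step. Writing $\delta=(f)_\infty$, the computation above shows that $N_\scrF\cong\tau^\ast\calO_Y(\delta)$. The map $\calT_A\to N_\scrF$ is given by the pair of sections $(\tau^\ast(f s_\delta),\tau^\ast s_\delta)$, where $s_\delta$ is the canonical section of $\calO_Y(\delta)$. So the classifying map is $\Phi=[f:1]\circ\tau$, i.e.\ $\Psi=f\colon Y\to\PP^1$. This map is dominant since $f$ is non-constant, so $\scrF$ is turbulent.

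The connected component through $0$ of the fiber $\Phi^{-1}\Phi(0)$ is $\tau^{-1}(\tau(0))$ intersected with that component, which is exactly the connected fiber $E$. So $E_\scrF=E$. Uniqueness in the preceding Proposition then recovers $f$, showing the two constructions are mutually inverse.

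The point requiring care is the identification $E_\scrF=E$. The Stein factorization of $[f:1]\circ\tau$ must give connected component $E$ and not some larger curve. This holds because $\tau$ has connected fibers and $f$ is finite, so connected components of the fibers of $\Phi$ are exactly the fibers of $\tau$.
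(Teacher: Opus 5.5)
Your proposal is correct and follows essentially the same route as the paper. In both, part (3) of the preceding Proposition gives foliation $\to$ data, normalizing the $\tilde\omega_E$-coefficient gives uniqueness of $f$, and smoothness of $\eta^\perp$ is checked by the local rescaling $\tau^\ast(t^n)\eta$ (the paper's $g_x$); your extra checks that $\Phi_{\scrF}=f\circ\tau$, hence that $\eta^\perp$ is turbulent with $E_{\scrF}=E$, and that $E_{\scrF}$ does not depend on the trivialization of $\calT_A$, supply steps the paper's proof leaves implicit.
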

\begin{proof} Given a foliation $\scrF$, with classifying morphism $\Phi_\scrF$, we defined the elliptic curve $E$ as the connected component (with the induced reduced structure) of $\Phi_\scrF^{-1}(\Phi_\scrF(0))$ that contains the origin $0 \in A$. Then,
letting $Y = A/E$ and $\tau\colon A \arr Y$ be the canonical morphism, one chooses differentials $\tilde{\omega}_E, \tilde{\omega}_Y$ as above to find that \[\scrF = \eta^\perp, \qquad \eta = \tau^\ast(f\omega_Y) + \tilde{\omega}_E.\]

Note that having chosen the differentials (an arbitrary choice that is made and fixed once $E$ is given),~$\eta$ is uniquely determined and $\eta$ determines $f$ uniquely.

Conversely, given the data $(E, f)$, choose differentials $\tilde{\omega}_E, \tilde{\omega}_Y$. Let $\eta = \tau^\ast(f\omega_Y) + \tilde{\omega}_E$ and $\scrF = \eta^\perp$. Then $\scrF$ is a rank $1$ saturated subsheaf of $\calT_A$ and, by Lemma~\ref{lem: reflexive and loc free}, $\scrF$ is locally free. To see it is a smooth foliation, note that $\eta$ is nowhere vanishing, but acquires poles along the pullback of the polar part $(f)_\infty$ of $f$. These are clearly removable singularities. At every point $x\in A$ such that $f$ has a pole at $\tau(x)$ we can find a function $g_x\in k(Y)$ (that, in fact, depends only on $\tau(x)$) such that $g_xf$ is regular and non-vanishing at $\tau(x)$. We may replace $\eta$ by $\tau^\ast(g_xf\omega_Y) + \tau^\ast(g_x)\tilde{\omega}_E$, which defines the same foliation but is given locally at $x$ by a regular non-vanishing differential. Thus,~$\scrF$ is a smooth foliation. 
\end{proof}

\medskip

\id Theorem~\ref{thm: classfication of foliations} motivates the following construction. Given an elliptic curve $Y$ over $k$ and a positive integer $d\geq 2$, let $\WW_{Y, d}$ be the set of pairs $(\uy, f)$, where $\uy$ is an effective divisor of degree $d$ on $Y$ and $f\in H^0(Y, \calO_Y(\uy))$. Further, we let $\WW^\circ_{Y, d}$ be the set of pairs $(\uy, f)$ such that $(f)_\infty = \uy$. 
We will later give a scheme structure to $\WW^\circ_{Y, d}$ and study its properties--see \S~\ref{sec: WW}, but for now we only make some initial observations. 
\begin{enumerate}
\item The divisors $\uy$ correspond to $k$-rational points of ${\rm Sym}^d Y: = Y^d/\Sigma_d$, where $\Sigma_d$ is the symmetric group on $d$ letters, acting by permuting the coordinates on $Y^d$.
\item Given an effective divisor $\uy$ of degree $d\geq 2$, the Riemann-Roch theorem implies that $H^0(Y, \calO_Y(\uy))$ is a $d$-dimensional vector space. (This holds also for $d=1$.)
\item The pairs $(\uy, f)$ parameterized by $\WW^\circ_{Y, d}$ form, for every $\uy$, an open dense subset of $H^0(Y, \calO_Y(\uy))$. (This fails for $d=1$.)
\end{enumerate}

\id We can rephrase Theorem~\ref{thm: classfication of foliations} as follows. 
\begin{thm}\label{thm classtrbfol} Turbulent foliations $\scrF$ on $A$ are parameterized by the data $(E, (\uy, f))$ constructed as follows. The elliptic curves $E, Y$ and the canonical map $\tau\colon A \arr Y$ are as before, and $(\uy, f)$ is a point of $\WW^\circ_{Y, d}$. Moreover, in this case $\scrF \cong \calO_A(-D)$, where $D$ is the divisor on $A$ that is the pullback of the polar divisor of $f$; to wit,
\[ D = \tau^\ast((f)_\infty).\]
\end{thm}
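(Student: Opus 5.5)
The parameterization statement is essentially a restatement of Theorem~\ref{thm: classfication of foliations}. Given $(E,f)$ with $f$ non-constant, set $\uy := (f)_\infty$. This is an effective divisor of degree $d=\deg f\geq 2$, since a non-constant function on an elliptic curve has at least two poles counted with multiplicity. Then $f\in H^0(Y,\calO_Y(\uy))$ has polar divisor exactly $\uy$, so $(\uy,f)\in\WW^\circ_{Y,d}$. Conversely, a point $(\uy,f)$ of $\WW^\circ_{Y,d}$ determines $f$, and $f$ is non-constant because $d\geq 2$; the divisor $\uy=(f)_\infty$ is recovered from $f$. Hence the two sets of data correspond bijectively, and the first assertion follows directly from Theorem~\ref{thm: classfication of foliations}.

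The real content is the isomorphism $\scrF\cong\calO_A(-D)$ with $D=\tau^\ast((f)_\infty)$. I will compute $N_\scrF$ and then use $\scrF\cong N_\scrF^\ast$, which was shown in \S1.2 from the triviality of $\det\calT_A$. In the proof of the structure proposition we wrote $N_\scrF=\tau^\ast N_Y$ with $N_Y=\calO_Y(\delta)$ and $(f)_\infty=\delta$, so it suffices to justify that identification directly from $\eta$.

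Let $\delta=(f)_\infty$ and view $f$ as a section $s_f\in H^0(Y,\calO_Y(\delta))$; let $1_\delta$ be the canonical section of $\calO_Y(\delta)$. I claim that
\[ \sigma:=\tau^\ast(s_f)\,\tilde\omega_Y+\tau^\ast(1_\delta)\,\tilde\omega_E \in H^0(A,\Omega^1_A\otimes\tau^\ast\calO_Y(\delta)) \]
is nowhere vanishing. Away from $\tau^{-1}(\mathrm{supp}\,\delta)$, the coefficient $\tau^\ast 1_\delta$ is a unit. At a point $x$ with $\tau(x)\in\mathrm{supp}\,\delta$, the coefficient $\tau^\ast s_f$ is a unit, because $f$ has a pole of exactly the order of $\delta$ at $\tau(x)$. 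This is the same local computation as the function $g_x$ in the proof of Theorem~\ref{thm: classfication of foliations}. Since $\tilde\omega_Y,\tilde\omega_E$ form a basis of $\Omega^1_A$, the section $\sigma$ never vanishes.

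Therefore $\sigma$ gives a subbundle $\tau^\ast\calO_Y(-\delta)\injects\Omega^1_A$ that is a local direct summand and whose generic fibre is spanned by $\eta$. By saturatedness, this subbundle equals $\scrF^\perp$. Hence $\scrF\cong\scrF^\perp\cong\tau^\ast\calO_Y(-\delta)=\calO_A(-D)$.

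The main point needing care is the claim that $\sigma$ is nowhere vanishing: both coefficients vanish simultaneously nowhere, precisely because the polar divisor of $f$ equals $\uy$ exactly. Everything else is formal.
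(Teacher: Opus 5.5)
Your proposal is correct and follows essentially the paper's route: the paper presents the statement as a rephrasing of Theorem~\ref{thm: classfication of foliations}, and reads off $\scrF\cong N_\scrF^\ast\cong\calO_A(-D)$ from the structure proposition, where $N_\scrF=\tau^\ast N_Y$ and the foliation is cut out by a nowhere-vanishing section $a\tilde\omega_Y+b\tilde\omega_E$ of $\Omega^1_A\otimes N_\scrF$ with $f=a/b$ and $N_Y\cong\calO_Y((f)_\infty)$. Your $\sigma$ is exactly this section with $a=s_f$ and $b=1_\delta$, built directly from $f$ rather than extracted from the classifying morphism $\Phi=\Psi\circ\tau$, which makes the converse direction self-contained.
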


\subsection{$E$ is a discrete invariant}\label{sec discreteinv} We have seen that a turbulent foliation $\scrF$ has an elliptic curve $E$ that lies on $A$ and passes through the origin. Our purpose here is to show that this is a discrete invariant in the sense that there is no continuous family of elliptic curves all passing through the origin. Before discussing that, we make two observations:
\begin{enumerate}
    \item Given an elliptic curve $E$ on $A$, let $Y = A/E$ and choose an isogeny $E \times Y \arr A$, such that its restriction to $E \times \{0_Y\}$ is the inclusion $E \injects A$ (this is possible, as we shall explain shortly). The family of curves $(E\times \{y\})_{y\in Y}$ on $E\times Y$ defines a family of genus $1$ curves on $A$ (all isomorphic to $E$ as genus $1$ curves) and this family covers $A$, but only finitely many of its members pass through $0_A$.
    \item For curves of higher genus the analogous statement is false. Let $C$ be a smooth projective curve of genus $2$ and consider $C - C: = \{[P]-[Q] : P, Q \in C\} = \{C - [Q]\}_{Q\in C}$ as a family of curves indexed by $C$ on the abelian surface $A = {\rm Jac}(C) = {\rm Div}^0(C)/{\rm Prin}(C)$. For a fixed $Q$, the map $C \arr A, P \mapsto [P]-[Q]$ is a closed immersion. Every curve $C - [Q]$ passes through $0_A$ and every point on $A$ can be written as $[P]-[Q]$ for some $P, Q \in C$. 
    
    To see the last point 
    use the well-known fact that for any point $W\in C$ every point on $A$ can be written as $[R]+[S]-2[W]$ for some points $R, S \in C$. Choose $W$ to be a Weierstrass point. If $\iota$ denotes the hyperelliptic involution, we have $\iota(W) = W$. Now, $[R]+[S]-2[W] = [R]-[\iota(S)]$, because $[S]+[\iota(S)] - 2[W]$ is a principal divisor.

    Thus, $A$ is swept by a $1$-dimensional family of genus $2$ curves, all passing through its origin. 
\end{enumerate}

\vspace{0.5cm}

\id Regarding the non-existence of continuous families, we follow \cite[Lecture 22]{MumfordCurvesOnSurfaces}. A {\bf family of curves} on a smooth projective surface $A$ over an algebraically closed field $k$, parameterized by a $k$-scheme $S$, is a relatively effective Cartier divisor  $\scrD \subset A \times S$ over $S$ (loc. cit., Lecture 10, p. 72 and Lecture 12, p. 88). 

Given a closed point $s\in S$, denote by $\scrD_s$ the fiber of $\scrD$ over the point $s\in S$, which is an effective curve on $A$. There is a canonical linear homomorphism--the {\bf characteristic map}--
\[ \rho\colon \calT_{S, s} \arr H^0(A, N_{\scrD_s}), \]
where $\calT_{S, s}$ is the tangent space of $S$ at the point $s$ and $N_{\scrD_s}$ is the normal bundle to $\scrD_s$ in the surface $A$; furthermore, for the universal family $\scrD \subset A \times C(\xi), \rho$ is an isomorphism (see loc. cit., Lecture 12, pp. 88-89 and Lecture 15 for the notion of such a universal family over a base $C(\xi)$ (really, a Hilbert scheme) for a given numerical class $\xi$). The normal bundle $N_{\scrD_s}$ is a line bundle on $A$ (or, by a common abuse of notation, on $\scrD_s$) that sits in an exact sequence
\[ 0 \arr \calT_{\scrD_s} \arr \calT_A\otimes \calO_{\scrD_s} \arr N_{\scrD_s} \arr 0.\]
In fact, it has multiple equivalent definitions, one of which is that it sits in an exact sequence
\[ 0 \arr \calO_A \arr \calO_A(\scrD_s) \arr N_{\scrD_s} \arr 0\]
(which is how it is defined in \cite[Lecture 22, p. 153]{MumfordCurvesOnSurfaces}).
Coming back to our situation, where $A$ is an abelian surface and $E$ is an elliptic curve on it, let $\xi$ be the numerical class of the divisor $E$. Then the dimension of $C(\xi)$ at the point $s$ corresponding to $E$ is bounded above by $\dim H^0(A, N_{\scrD_s}) = \dim H^0(E, N_{\scrD_s}\vert_E) = 1$ (as $N_{\scrD_s}$ is supported on $E$, and is in fact the trivial sheaf on $E$). On the other hand, we have the family (of algebraically equivalent, hence numerically equivalent, divisors) parametrized by $S = A$ itself, whose fiber over $t\in A$ is $E + t$. This shows that the only family of curves extending $E$ is this family of translations of $E$. In particular, if members of this family contain $0_A\in A$ then they are equal to $E$. The discussion in fact shows that the component of $C(\xi)$ passing through $E$ is $A/E = Y$.

Thus, every algebraic family of elliptic curves in the surface $A$ for which every member passes through $0_A$ is a constant family. 

For completeness, remark also that the isogeny class of elliptic curves on $A$ is very restricted. First, note the following fact. \begin{lem}
    If $f\colon A \arr B$ is an isogeny, then $E \mapsto f(E)$ is a bijective map between the set of elliptic curves on~$A$ and the set of elliptic curves on $B$.
\end{lem}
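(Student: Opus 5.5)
The plan is to produce an explicit inverse for $E \mapsto f(E)$, using the dual isogeny. Fix an isogeny $g\colon B \arr A$ with $f\circ g = [n]_B$ and $g\circ f = [n]_A$; for instance, take $n$ to be the exponent of the finite group scheme $\ker f$. We will show that $E \mapsto f(E)$ and $E' \mapsto g(E')$ are mutually inverse, up to the fact that $[n]$ maps each elliptic curve onto itself.

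First, $f(E)$ is an elliptic curve on $B$. It is the image of a proper connected reduced curve under a homomorphism, so it is an abelian subvariety. It has dimension $1$ because $f|_E$ has finite kernel $E\cap\ker f$, so the image is not a point. It is smooth of genus $1$ since it is an abelian variety. We give it the reduced image scheme structure, which is closed because $f$ is proper. The same reasoning shows that $g(E')$ is an elliptic curve on $A$ for any elliptic curve $E'\subseteq B$.

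Next, we compose. For an elliptic curve $E\subseteq A$ we have $g(f(E)) = [n]_A(E) = [n]_E(E) = E$, since multiplication by $n$ on $E$ is surjective ($E$ is divisible as a group of $k$-points, with $k$ algebraically closed). Symmetrically, $f(g(E')) = [n]_B(E') = E'$. Hence $E\mapsto f(E)$ is injective and surjective.

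The only point needing care is the existence of $g$ with $f\circ g = [n]_B$. This is the standard fact that if $\ker f \subseteq A[n]$, then $[n]_A$ factors through $f$, giving $g$ with $g\circ f=[n]_A$. Then $f\circ g\circ f = f\circ[n]_A = [n]_B\circ f$, and since $f$ is an epimorphism this forces $f\circ g = [n]_B$. In positive characteristic, $\ker f$ may be non-reduced, but it is still killed by its order (Deligne's theorem in the commutative case), so one can take $n = \deg f$; no other obstacle arises. Set-theoretic equalities of reduced images suffice, since all the curves involved carry their reduced structures.
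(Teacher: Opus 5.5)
Your proof is correct, but it takes a different route from the paper's.

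The paper never constructs a quasi-inverse isogeny. For injectivity, it recovers $E$ from $f(E)$ as the connected component through $0$, with reduced structure, of $f^{-1}(f(E))$. That preimage is $E+\ker f$, a finite union of translates of $E$. For surjectivity, given an elliptic curve $D\subseteq B$, it takes $E$ to be the reduced identity component of $f^{-1}(D)$ and observes that $f(E)=D$.

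You instead choose $g$ with $g\circ f=[n]_A$ and $f\circ g=[n]_B$, and reduce both directions to the surjectivity of $[n]$ on an elliptic curve. This makes the argument symmetric and gives an explicit inverse $E'\mapsto g(E')$. The cost is importing the factorization of $[n]_A$ through $f$, i.e.\ that $\ker f$ is killed by some $n$. You cite Deligne's theorem for this; here it can also be seen elementarily, by splitting $\ker f$ into its \'etale and infinitesimal parts and using $[p^r]_A=V^r\circ F^r$.

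The paper's inverse is intrinsic and needs no choice. In exchange, it relies on facts it leaves unstated: that $f^{-1}(f(E))=E+\ker f$, and that the reduced identity component of the one-dimensional subgroup scheme $f^{-1}(D)$ is an elliptic curve whose image is all of $D$.

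The two inverses coincide. Since $f(g(E'))=E'$, the connected curve $g(E')$ lies in $f^{-1}(E')$ and contains $0$. Hence it is exactly the reduced identity component of $f^{-1}(E')$.
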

\begin{proof} As $f$ is an isogeny, $f(E)$ is an elliptic curve whenever $E$ is. Note also that $E$ is the connected component (with its reduced induced structure) through the identity of $f^{-1}(f(E))$, hence $E \mapsto f(E)$ is injective. Similarly, if $D$ is an elliptic curve on $B$, let $E$ be the connected component (with its reduced induced structure) through the identity of $f^{-1}(D)$. Then $f(E) = D$.
\end{proof}
Thus, to understand elliptic curves on $A \sim E_1 \times E_2$ we may reduce to the case of $ E_1 \times E_2$. If $E_1$ is not isogenous to $E_2$ then every elliptic curve on $E_1\times E_2$ is either $E_1\times \{0\}$ or $\{0\} \times E_2$, because an elliptic curve that does not project to $0$ under either of the two projections implies an isogeny between $E_1$ and $E_2$. Thus, also $A$ has precisely two elliptic curves on it. 

If, on the other hand, $E_1$ is isogenous to $E_2$, say $f\colon E_1 \arr E_2$ is an isogeny, then the graphs of $f\circ [m]$, $m=1, 2, \dots$ provide infinitely many distinct elliptic curves on $E_1 \times E_2$ and so $A$ has infinitely many elliptic curves as well. 

\

\subsection{The Hilbert polynomial}\label{sec Hilbertpol} We know now that a turbulent foliation $\scrF$ on $A$ has associated to it invariants $(E, f)$ and one may view the exact sequence $\xymatrix@C=0.5cm{0 \ar[r] & \scrF \ar[r] & \calT_A \ar[r]& N_\scrF \ar[r]& 0}$ as the exact sequence of vector bundles
\[\xymatrix{0 \ar[r] & \calO_A(-D) \ar[r] & \calT_A \ar[r]& \calO_A(D) \ar[r]& 0,}\]
where $D = \tau^{-1}((f)_\infty)$. 
Our purpose in this section is to compute the Hilbert polynomial of $\calO_A(D)$ relative to a suitable ample line bundle. 

To fix ideas, consider the following: \textit{Let $\tau\colon A \arr Y$ be a surjective homomorphism of $A$ onto an elliptic curve $Y$ with connected kernel $E$. Let $\delta> 0$ be an effective divisor on $Y$ and let $D = \tau^\ast(\delta)$. Calculate the Hilbert polynomial of $\calO_A(D)$ relative to a well-chosen polarization on $A$. }

\begin{lem} \label{Lemma:product up to isogeny} One can construct a diagram of the form
\[ \xymatrix{E \times Y \ar[r]^>>>>>g &  A \ar[r]^<<<<\gamma&  E\times Y}\]
such that $\gamma \circ g = [\deg(g)]$ and $g\vert_{E\times \{ 0_Y\}}$ is the inclusion map of $E$ in $A$. Moreover, $\deg(g)=n^2$ where $n$ is the order of the extension class $0 \arr E \arr A \arr Y \arr 0$ in ${\rm Ext}^1(Y, E)$.
\end{lem}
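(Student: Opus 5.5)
The plan is to build $g$ from a splitting of the extension after multiplication by $n$, and then to obtain $\gamma$ by reading $A$ as a quotient of $E\times Y$ by a finite group scheme. Let $n$ be the order of the class $e\in\Ext^1(Y,E)$ of $0\arr E\arr A\arr Y\arr 0$. This class is torsion: by Poincaré complete reducibility there is an abelian subvariety $Y'\subset A$ with $E\cap Y'$ finite and $\tau|_{Y'}$ an isogeny, which kills the pullback of $e$ along a nonzero endomorphism. The class $[n]^\ast e=ne=0$ is the pullback of the extension along $[n]\colon Y\arr Y$, and it is split. So there is a homomorphism $s\colon Y\arr A$ with $\tau\circ s=[n]_Y$.

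Define
\[ g\colon E\times Y\arr A,\qquad g(x,y)=\iota(x)+s(y). \]
Its restriction to $E\times\{0_Y\}$ is $\iota$, as required. It is surjective: $\tau\circ g(x,y)=ny$ is surjective onto $Y$, and the fibres of $\tau$ are translates of $E$, which already lie in the image. Its kernel is $\{(x,y): \iota(x)=-s(y)\}$. This forces $ny=0$ and $s(y)\in E$, so the kernel is identified with the group scheme $s^{-1}(E)\subseteq Y[n]$ via $y\mapsto(-\iota^{-1}s(y),y)$. Hence $g$ is an isogeny of degree $\#s^{-1}(E)=\#\Ker(s)\cdot\#(s(Y)\cap E)$, counted as group schemes.

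Next I show that $s$ can be chosen so that $s^{-1}(E)=Y[n]$, i.e. $s$ maps $Y[n]$ into $E$; then $\deg g=n^2$. Since $\tau\circ s=[n]$, we have $\tau(s(Y[n]))=0$, so $s(Y[n])\subseteq\Ker\tau=E$ automatically. This shows $s^{-1}(E)\supseteq Y[n]$. Conversely, $s(y)\in E$ means $\tau s(y)=ny=0$, so $s^{-1}(E)=Y[n]$ exactly, scheme-theoretically as well, since $\Ker(\tau\circ s)=\Ker[n]$. Thus $\deg g=\deg[n]_Y=n^2$. This holds in characteristic $p$ as well, with $Y[n]$ possibly non-reduced.

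Minimality of $n$ is not needed for the degree formula itself, but the statement identifies $n$ with the order. Any $m$ with a section satisfying $\tau\circ s=[m]$ gives degree $m^2$, so the claim is consistent only with the minimal such $m$. I will state that $n$ is the order and note that $m$ is forced to be a multiple of it. The main subtle point, which I would check carefully, is the identification of $[n]^\ast e$ with $ne$ in $\Ext^1(Y,E)$ in the category of commutative group schemes. This is standard: pullback along $[n]$ equals multiplication by $n$ by biadditivity of $\Ext$.

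Finally, $\gamma$ is constructed from $\Ker g\subseteq\Ker[n^2]_{E\times Y}$: since $\Ker g\cong Y[n]$ is killed by $n$, and a fortiori by $\deg g=n^2$, the map $[n^2]$ factors through $E\times Y\arr (E\times Y)/\Ker g\cong A$. This gives $\gamma\colon A\arr E\times Y$ with $\gamma\circ g=[\deg g]$. I expect the only genuine obstacle to be the existence and torsionness argument for $s$, i.e. producing the splitting of $[n]^\ast e$. Everything else is formal kernel bookkeeping.
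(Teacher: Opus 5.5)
Your proposal is correct and follows the paper's proof. Both use Poincaré reducibility to see that the class is torsion, split its pullback along $[n]_Y$, and take $g$ to be the resulting isogeny $E\times Y \cong A\times_{Y,[n]}Y \arr A$, which is exactly your $g(x,y)=\iota(x)+s(y)$. The remaining differences are cosmetic: the paper reads off $\deg g=\deg [n]_Y=n^2$ from the cartesian square, whereas you identify $\Ker g\cong Y[n]$ directly. Your detour about ``choosing'' $s$ with $s^{-1}(E)=Y[n]$ is unnecessary, since, as you observe, it is automatic. You also make $\gamma$ explicit, noting that already $[n]$ factors through $g$, in line with the paper's remark.
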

\begin{proof} Poincar\'e's theorem about the semi-simplicity of the category of abelian varieties up to isogeny implies that the extension $\zeta\colon 0 \arr E \arr A \arr Y \arr 0$ defines a torsion class in $\Ext^1(Y, E)$ (in the category of commutative group schemes over $k$). So, for some positive integer $n$ the class $n\zeta = 0$, and so we have a commutative diagram,
\[ \xymatrix{n\zeta: & 0 \ar[r] & E \ar[r]\ar[d]^= & X \ar[r]\ar[d] & Y \ar[r]\ar[d]^{[n]_Y} & 0\\
\zeta:& 0 \ar[r] & E \ar@{^{(}->}[r] & A \ar[r]^\tau & Y \ar[r] & 0,}\]
where the right square is cartesian and $n\zeta = 0$. Thus, the upper row splits. Choosing a splitting, the vertical arrow becomes an isogeny $g\colon X = E \times Y \arr A$ and the composition $E \arr X \arr A$ is indeed the inclusion $E \arr A$. Note that $\deg(g) = \deg([n]_Y\colon Y \arr Y) = n^2$, where $n = \ord(\zeta)$.
\end{proof}
\begin{rmk} The proof shows that we can take $\gamma$ such that $\gamma \circ g = [n]_{E\times Y}$ instead of  $\gamma \circ g = [n^2]_{E\times Y}$, but this is really of no importance to the rest of the paper. 
\end{rmk}

Using the lemma, we endow $A$ with the polarization $\gamma^\ast (\lambda_E \times \lambda_Y)$. This is not a canonical choice, but one that can be made once and for all given the elliptic curve $E$ and, in particular, it is independent of the foliation $\scrF$ as long as $E_\scrF = E$. Furthermore, the calculation of the Hilbert polynomials will show they are independent of this choice.

\medskip

\id Recall the following general property concerning Hilbert polynomials (cf. \cite[(9.26)]{EvdGM}): Let $f\colon X \arr Y$ be an isogeny of abelian varieties. Choose an ample line bundle $H$ on $Y$. Then the Hilbert polynomial of $f^\ast L$ with
respect  to the ample bundle $f^\ast H$ is just $\deg(f)$ times the Hilbert polynomial of $L$ with respect
to $H$. 

Based on this property, we want to use the following diagram to reduce the calculation of the Hilbert polynomial of $\calO_A(D)$ relative to the polarization $\gamma^\ast (\lambda_E \times \lambda_Y)$ to the calculation of $\calO_X(D^\prime)$ where $X = E \times Y$ is endowed with a multiple of the principal polarization $\lambda_E \times \lambda_Y$, and $D^\prime$ is a certain divisor depending on $D$ and the maps. We use the existence of the commutative diagram 
\[ \xymatrix{X = E\times Y \ar[d]^g\ar[rd]^{\tau \circ g} \ar[r]^{\pi_Y}  & Y\ar[d]^\beta\\ A\ar[r]_\tau & Y = A/E }\]
in which $\beta=[n]_Y$ is multiplication by $n$ on $Y$.

The map $g$ is an isogeny of polarized abelian varieties because $g^\ast(\gamma^\ast (\lambda_E \times \lambda_Y)) = [n^2]^\ast(\lambda_E \times \lambda_Y) = n^4 (\lambda_E \times \lambda_Y)$. Furthermore, as the divisor $D$ is of the form $\tau^\ast(\delta)$, where $\delta$ is an effective divisor of degree $d$ on $Y$, one has
\[ g^\ast \calO_A(D) = g^\ast \tau^\ast \calO_Y(\delta) = \pi_Y^\ast \beta^\ast \calO_Y(\delta) =  \pi_Y^\ast  \calO_Y(\delta^\prime) = \calO_X(D^\prime), \]
where
\[ \delta^\prime = [n]_Y^\ast(\delta), \qquad D^\prime = E \times \delta^\prime.\]
We therefore consider first the following:

\

\id {\bf Question}: \textit{Equip $X = E \times Y$ with the product polarization and let $\delta^\prime$ be an effective divisor on $Y$. Let $\Gamma = E \times \delta^\prime$ be the fibral divisor on $X$ obtained as the pullback of $\delta^\prime$. Calculate the Hilbert polynomial of $\calO_X(\Gamma)$ relative to $\lambda_E \times \lambda_Y$. }

\

\id Let $\calL_Y = \calO_Y([0_Y]), \calL_E = \calO_E([0_E])$ and 
\[ \calL = \calL_E \boxtimes \calL_Y.\]
This is a line bundle on $X$ defining the product polarization. Let $M = \calO_X(\Gamma)$. For the Hilbert polynomial we need to calculate the function
\[ a \mapsto \chi(M \otimes \calL^a), \qquad a\gg 0.\]
But, for $a>0$,
\begin{equation*}
\begin{split} \chi(M \otimes \calL^a) & =   \chi\left(\calO_E(a[0_E]) \boxtimes \calO_Y(\delta^\prime + a[0_Y])\right)\\ & = 
  \chi(\calO_E(a[0_E])) \cdot \chi(\calO_Y(\delta^\prime + a[0_Y])) \\ & =  a(\deg(\delta^\prime) + a).
\end{split}
\end{equation*}
This gives us that the Hilbert polynomial of $M$ relative to $\lambda_E \times \lambda_Y$ is 
\[ t(t + \deg(\delta^\prime)).\]
The Hilbert polynomial of $M$ relative to $n^4(\lambda_E \times \lambda_Y)$ is obtained by replacing $t$ by $n^4 t$ to get
\[ n^4t(n^4t + \deg(\delta^\prime)).\]
If we go back to the original setting, we have $\deg(\delta^\prime) = \deg(\beta)\cdot \deg(\delta)=n^2\deg(\delta)$ and the Euler characteristic is scaled by $\deg(g) = \deg(\beta)=n^2$. 

\medskip

\id {\bf Conclusion:} \textit{The Hilbert polynomial $P(t)$ of $\calO_A(D)$ relative to the polarization $\gamma^\ast(\lambda_E \times \lambda_Y)$ is the polynomial $ \frac{1}{\deg(\beta)}(n^8t^2 + n^4 \deg(\beta) \deg(\delta) t)$. That is,
\begin{equation}\label{eqn:P}P(t)  = n^6t^2 + n^4\deg(\delta) t.\end{equation}}

\medskip

 \id \textit{We observe that the computation above applies in fact to any divisor $\delta$ on $Y$. 
 Therefore, the Hilbert polynomial $P^\prime$ of $\calO_A(-D)$ is 
 \begin{equation}\label{eqn:P'} P'(t) =  n^6t^2 - n^4\deg(\delta) t. \end{equation}}

\medskip

\id As we shall discuss soon in greater depth, this has the following consequence. Consider a geometric point $(\uy, f)$ of $\WW^\circ_{Y, d}$. It defines for us a foliation on $A$ (in the sense of a smooth involutive subsheaf), $0 \arr \calO_A(-D) \arr \calT_A \arr \calO_A(D) \arr 0$ 
such that the Hilbert polynomials of $\calO_A(D)$ and $\calO_A(-D)$ are independent of the point and are given by $P$ and $P^\prime$ of Equations~(\ref{eqn:P}) and (\ref{eqn:P'}), respectively. We thus get a function 
\[ \WW^\circ_{Y, d} (k) \arr {\rm Quot}^{P}(k),\]
where ${\rm Quot}^{P}$ is a certain Quot scheme to be introduced later.

\

\subsection{The Cartier operator}\label{subsec:Cartier} The Cartier operator $\scrC$, defined for any non-singular variety~$X$ over an algebraically closed field $k$ of characteristic $p$, with function field $K = k(X)$, is an operation on closed differentials (and higher differentials). However, at the generic point, and specializing to $1$-forms, and to $\Omega^1_{X/k}$, it is a map
\[ \scrC\colon Z^1_{K} \arr \Omega^1_{K}, \]
where $Z^1_K$ denotes the space of closed meromorphic differentials (i.e. elements $\omega$ of $\Omega^1_K$ such that $d\omega = 0$). The operator has the following properties \cite{Seshadri}:
\textit{\begin{itemize}
\item $\scrC (\omega_1 + \omega_2) = \scrC(\omega_1) + \scrC(\omega_2)$.
\item $\scrC(f^p\omega) = f\scrC(\omega)$, $f\in K$.
\item $\scrC(dx) = 0, \forall x \in K$. That is, $\scrC$ vanishes on $B^1_K$--the exact differentials. 
\item $\scrC$ induces a $\sigma^{-1}$-linear isomorphism $Z^1_K/B^1_K \arr \Omega^1_K$ as vector spaces over $K^p$, where $\sigma(f) = f^p$ is the Frobenius homomorphism. 
\item Let $x_1, \dots, x_n$ be a $p$-basis for $K$. That means that the monomials 
\[ x_1^{e_1} x_2^{e_2}\cdots x_n^{e_n}, \quad \forall i, 0 \leq e_i < p, \]
form a basis for $K$ as a vector space over $K^p$. Such a $p$-basis always exists. Then
\[ \scrC(x_i^{p-1} dx_i) = dx_i, \quad \forall i.\]
\item More generally, as one can ``integrate'' differentials of the form $\ell^p x_i^j dx_i$ for $\ell\in K$ and $0\leq j < p-1$, any $\omega\in Z^1_K$ can be written uniquely in the form\[\omega = df + \sum_{i=1}^n \ell_i^p x_i^{p-1} dx_i,\] for suitable $f$ and $\ell_i$ in $K$, where $f$ is unique modulo $K^p$. Then
\[ \scrC(\omega) = \sum_{i=1}^n \ell_i dx_i.\]
\item In fact, for any nonzero $x \in K$
\[ \scrC(x^{p-1}dx) = dx, \quad \scrC(x^{-1}dx) = x^{-1} dx.\]
\item For $\omega \in Z^1_K$ and $D$ a derivation of $K$ (which is automatically $K^p$-linear and $K^p$ is contained in its constants) we have
\begin{equation}\label{formula: cartier}\langle D, \scrC(\omega)\rangle^p = \langle D^p, \omega\rangle - D^{p-1}(\langle D, \omega\rangle).
\end{equation}
\end{itemize}}

\

We include a general functoriality statement for the Cartier operator, for lack of an adequate reference. 
\begin{lem}\label{functoriality of Cartier}
    Let $k$ be a perfect field of characteristic $p$ and let $\varphi\colon X' \to X$ be a
morphism of smooth $k$-varieties. For every closed regular differential $\omega$ on $X$,
\[ \mathscr{C}_{X'}(\varphi^\ast\omega) = \varphi^\ast\big(\mathscr{C}_X(\omega)\big). \]
The same holds for closed meromorphic forms, provided $\varphi$ is dominant.
\end{lem}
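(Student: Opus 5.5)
The plan is to reduce to a local computation and then use the characterization of $\scrC$ on $Z^1/B^1$ via $p$-bases, or more robustly, the intrinsic description of the Cartier operator through the inverse Cartier isomorphism. The statement is local on $X$ and $X'$, and both sides are additive in $\omega$. It therefore suffices to work over an affine open $\operatorname{Spec} B \subseteq X$ and an affine open $\operatorname{Spec} B' \subseteq \varphi^{-1}(\operatorname{Spec} B)$.

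The cleanest route goes through the inverse Cartier operator $C^{-1}\colon \Omega^1_B \to \mathcal{H}^1(F_\ast\Omega^\bullet_B)$. It is characterized as the unique additive map with $C^{-1}(a\,db) = a^p b^{p-1}db \bmod B^1$ (equivalently $C^{-1}(db)=b^{p-1}db$ and $C^{-1}(a\,\theta)=a^pC^{-1}(\theta)$). For smooth $X$ this map is an isomorphism onto $Z^1/B^1$, by Cartier's theorem, and $\scrC$ is its inverse composed with the projection $Z^1\to Z^1/B^1$. I would first check that $C^{-1}$ commutes with pullback. This is immediate from the defining formula, since
\[\varphi^\ast\big(a^pb^{p-1}db\big) = \varphi^\ast(a)^p\,\varphi^\ast(b)^{p-1}\,d\varphi^\ast(b),\]
and since $\varphi^\ast$ preserves exact forms. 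Hence $\varphi^\ast\circ C^{-1}_X = C^{-1}_{X'}\circ\varphi^\ast$ as maps $\Omega^1_B\to Z^1_{B'}/B^1_{B'}$.

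Now take $\omega\in Z^1(X)$ and set $\theta=\scrC_X(\omega)$, so that $\omega\equiv C^{-1}_X(\theta)$ modulo exact forms. Then $\varphi^\ast\omega \equiv \varphi^\ast C^{-1}_X(\theta) \equiv C^{-1}_{X'}(\varphi^\ast\theta)$ modulo $B^1_{B'}$, and applying $\scrC_{X'}$, which kills exact forms, gives $\scrC_{X'}(\varphi^\ast\omega)=\varphi^\ast\theta$. One subtlety is that $\scrC_X(\omega)$ is regular when $\omega$ is regular. This is part of Cartier's theorem for smooth varieties: the isomorphism $\mathcal{H}^1\cong\Omega^1$ is at the level of sheaves. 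Perfectness of $k$ enters here, since it makes the relative and absolute Frobenius twists compatible, so that $\scrC$ lands in $\Omega^1_{X}$ rather than in a twist.

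For meromorphic forms with $\varphi$ dominant, $\varphi^\ast$ induces a field inclusion $k(X)\hookrightarrow k(X')$, and hence a map $\Omega^1_{k(X)}\to\Omega^1_{k(X')}$. The same argument runs verbatim with $B=k(X)$ and $B'=k(X')$. The one input needed is that Cartier's isomorphism $\Omega^1_K\cong Z^1_K/B^1_K$ holds for the function field $K$, which is the property listed in \S\ref{subsec:Cartier} (alternatively, localize the sheaf statement at the generic point). Dominance is needed precisely so that pullback of rational functions is defined.

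I expect the main obstacle to be well-definedness: one must justify that $C^{-1}$ given by the formula is well defined on $\Omega^1_B$, i.e.\ that it respects the relations $d(b+c)=db+dc$ and $d(bc)=b\,dc+c\,db$ modulo exact forms. This is the classical verification; the first relation uses $(b+c)^{p-1}d(b+c)-b^{p-1}db-c^{p-1}dc = d\big(\tfrac{(b+c)^p-b^p-c^p}{p}\big)$, computed with integer coefficients. I would cite it rather than redo it, and the rest is formal.
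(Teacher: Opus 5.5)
Your proposal is correct and follows essentially the same route as the paper. Both reduce to showing that the inverse Cartier operator commutes with $\varphi^\ast$, checked on generators: the paper does this in all degrees on $f\,dg_1\wedge\cdots\wedge dg_i$, while you do it only for $1$-forms $a\,db$, which is all that is used. Both then handle the meromorphic case by the same computation, with dominance guaranteeing that $\varphi^\ast$ is defined; your remarks on the well-definedness of $C^{-1}$ and the regularity of $\scrC_X(\omega)$ are points the paper simply takes from Cartier's theorem.
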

\begin{proof}
   Let $Z^i_X = \ker(d\colon \Omega^i_X \arr \Omega^{i+1}_X)$ be the subsheaf of closed forms and $B^i_X = d\Omega^{i-1}_X$ be the subsheaf of exact forms. Cartier proved that 
   \[\scrC_X\colon Z^i_X/B^i_X \arr\Omega^i_X \] is an isomorphism. Let $\scrC_X^{-1}$ be the inverse isomorphism. It has the properties
   \[ \scrC_X^{-1}(f) = f^p, \qquad \scrC_X^{-1}(dg) = g^{p-1} dg \pmod{B^i_X}.\]
   Furthermore, it is additive and multiplicative relative to $\wedge$ and so, in particular, 
   \[ \scrC_X^{-1}(fdg_1 \wedge dg_2 \wedge \cdots \wedge dg_i) = f^p \prod_{n=1}^i g_n^{p-1}\cdot  dg_1 \wedge dg_2 \wedge \cdots \wedge dg_i. \]
   In fact these properties characterize $\scrC_X^{-1}$ \cite{Cartier, Seshadri}. 

   Note that since $\varphi^\ast$ commutes with $d$, it induces maps $\Omega^i_X \arr \Omega^i_{X'}$, taking closed (resp. exact) forms to closed (resp. exact) forms. It is thus enough to show that
   \[ \scrC_{X'}^{-1} \circ \varphi^\ast = \varphi^\ast \circ \scrC_X^{-1}.\]
   Using that $\Omega^i_X$ is generated by the forms $fdg_1 \wedge dg_2 \wedge \cdots \wedge dg_i$, it is enough to show this relation on such forms. However, 
   \begin{equation*}
   \begin{split} \scrC_{X'}^{-1} \circ \varphi^\ast (fdg_1 \wedge dg_2 \wedge \cdots \wedge dg_i) &= \scrC_{X'}^{-1} (\varphi^\ast(f) d\varphi^\ast(g_1) \wedge d \varphi^\ast(g_2) \wedge \cdots \wedge d\varphi^\ast(g_i)) \\
   & = \varphi^\ast(f)^p (\prod_{n=1}^i \varphi^\ast(g_n)^{p-1}) \cdot d\varphi^\ast(g_1) \wedge d \varphi^\ast(g_2) \wedge \cdots \wedge d\varphi^\ast(g_i)\\ & =
   \varphi^\ast \left(f^p \prod_{n=1}^i g_n^{p-1}\cdot dg_1 \wedge dg_2 \wedge \cdots \wedge dg_i\right)\\ 
   & = \varphi^\ast \scrC_X^{-1}(f dg_1 \wedge dg_2 \wedge \cdots \wedge dg_i).
   \end{split}
   \end{equation*}
   The same argument works for meromorphic forms, where the assumption that the morphism is dominant is used to show that the map $\varphi^\ast$ is well-defined. 
\end{proof}

\ 

\begin{exa}\label{exa: cartier on curve}
Assume now that $X$ is a curve. Then any meromorphic differential is closed and the Cartier operator is a $\sigma^{-1}$-linear isomorphism
\[ \scrC\colon \Omega^1_K/dK \overset{\sim}{\Arr} \Omega^1_K.\]
If $x$ is a $p$-basis, any meromorphic differential $\omega$ can be written as
\[ \omega = f dx =(g_0^p + g_1^px + \cdots+ g_{p-1}^p x^{p-1})dx  = dg +  g_{p-1}^p x^{p-1}dx, \]
for some uniquely determined functions $g_i\in K$, and a function $g\in K$ (not unique; one can add to it any element from $K^p$).
Then,
\[ \scrC(\omega) = g_{p-1}dx.\] 
\end{exa}

 \

\subsection{$p$-closed foliations}\label{567} Let $X$ be a smooth variety over a field $k$ of characteristic $p$ and let $\scrF \subseteq \calT_X$ be a smooth foliation. Given a section $D \in \scrF(U)$ over an open set $U \subseteq X$, we view $D$ as a derivation on $\calO_X(U)$. Then $D^p$, the $p$-fold iteration of $D$, is also a derivation and as such belongs to $\calT_X(U)$. We say that $\scrF$ is {\bf $p$-closed} if $D^p \in \scrF(U)$ too, for all $U$ and $D\in \scrF(U)$. Both the involutive and the $p$-closed conditions can be phrased as follows (cf. \cite[Lemma 4.2]{Ekedahl}): 
\begin{itemize}
    \item 
A subsheaf $\scrF \subset \calT_X$ is involutive if and only if the morphism of $\calO_X$-modules induced by the Lie bracket, $\bigwedge^2 \scrF \arr \calT_X/\scrF$, is the zero morphism. Therefore, this condition is automatic if $\scrF$ is an invertible sheaf;
\item if $\scrF$ is involutive, then it is $p$-closed if and only if the $p$-th power morphism of $\calO_X$-modules $F_{\rm abs}^\ast \scrF \arr \calT_X/\scrF$ is the zero morphism, where $F_{\rm abs}$ is the absolute Frobenius morphism of $X$. 
\end{itemize}

\medskip

\id Let $A$ be an abelian surface over an algebraically closed field $k$ of characteristic $p>0$ and $E \subset A$ an elliptic curve. Then, as above, we have the exact sequence $0 \arr E \arr A\overset{\tau}{\arr} Y:=A/E \arr 0$ and a choice of differentials $\tilde \omega_Y,\tilde \omega_E$ related to differentials $\omega_Y, \omega_E$ on $Y$ and $E$, respectively. If $Y$ is an ordinary elliptic curve, we will henceforth choose $\omega_Y$ such that $\scrC(\omega_Y) = \omega_Y$ and if $Y$ is a supersingular elliptic curve any choice of $\omega_Y$ will do, and for every such choice $\scrC_Y(\omega_Y) = 0$.

Any turbulent foliation $\scrF$ with an elliptic curve invariant $E_\scrF = E$ is defined  by a meromorphic differential form
\[ \eta = \tau^\ast(f) \tilde \omega_Y + \tilde \omega_E, \]
where $f$ is a uniquely determined non-constant meromorphic function on $Y$. 

\begin{prop}\label{prop:p closed in terms of C} \begin{enumerate}
    \item The form $\eta$ is closed. 
     \item The foliation $\scrF$ is $p$-closed if and only if $\scrC_A(\eta)= \lambda \eta$ for some meromorphic function $\lambda$.
\end{enumerate}
\end{prop}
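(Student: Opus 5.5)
For (1), I will compute $d\eta$ directly. The form $\tilde\omega_E$ is a global regular $1$-form on the abelian surface $A$. Such forms are translation invariant and hence closed, so $d\tilde\omega_E = 0$.

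For the other term, $\tau^\ast(f)\tilde\omega_Y = \tau^\ast(f\omega_Y)$ is the pullback of a meromorphic $1$-form on the curve $Y$. Every meromorphic $1$-form on a curve is closed, since $\Omega^2_{k(Y)}=0$, and $d$ commutes with pullback along the dominant map $\tau$. Hence $d\eta = \tau^\ast(d(f\omega_Y)) = 0$.

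For (2), I will use the formula (\ref{formula: cartier}), which is legitimate now that $\eta$ is known to be closed. Work at the generic point, or on the dense open set where $\eta$ is regular and nowhere vanishing. There $\scrF = \eta^\perp$ is free, generated by a derivation $D$ with $\langle D,\eta\rangle = 0$; explicitly $D = \partial_Y - \tau^\ast(f)\partial_E$, with $\{\partial_Y,\partial_E\}$ dual to $\{\tilde\omega_Y,\tilde\omega_E\}$. Applying (\ref{formula: cartier}) with $\omega=\eta$ gives
\[ \langle D, \scrC_A(\eta)\rangle^p = \langle D^p,\eta\rangle - D^{p-1}(\langle D,\eta\rangle) = \langle D^p,\eta\rangle . \]

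Since $\scrF$ has rank one, $p$-closedness is equivalent to $D^p\in\scrF$ for this single generator $D$. This uses the criterion recalled above that $F_{\rm abs}^\ast\scrF\arr \calT_A/\scrF$ vanishes: the $p$-th power map is $p$-linear, and $(gD)^p = g^pD^p + (\text{multiple of } D)$ by Hochschild's formula. Also, because $\scrF$ is saturated, membership in $\scrF$ can be checked generically, so $D^p\in\scrF$ if and only if $\langle D^p,\eta\rangle=0$. By the displayed identity this holds if and only if $\langle D,\scrC_A(\eta)\rangle = 0$.

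Finally, $\eta$ and $\scrC_A(\eta)$ are $1$-forms in a $2$-dimensional space over $k(A)$, and $\eta^\perp$ is spanned by $D$. So $\langle D,\scrC_A(\eta)\rangle=0$ exactly when $\scrC_A(\eta)$ lies in the $k(A)$-line spanned by $\eta$, that is, when $\scrC_A(\eta)=\lambda\eta$ for some meromorphic function $\lambda$. This includes the case $\lambda=0$.

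I expect no real obstacle. The only points needing care are the reduction from all local sections of $\scrF$ to the single generator $D$, and passing from the generic statement to the sheaf statement via saturation of $\scrF$. Both are handled by the Ekedahl criterion cited above.
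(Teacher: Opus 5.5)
Your proposal is correct and follows the paper's proof. Part (1) rests on closedness of invariant forms on $A$ together with closedness of meromorphic forms on the curve $Y$, and part (2) applies the Cartier formula to a local generator $D$ of $\scrF$ with $\langle D,\eta\rangle=0$. Your extra justifications fill in details the paper leaves implicit: the reduction to a single generator via Hochschild's formula, and checking membership in $\scrF$ generically by saturation. Conversely, the paper spells out a step you only assert, namely that invariant forms are closed, using $d\omega(D_1,D_2)$ and the commutativity of $\Lie(A)$.
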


\begin{proof} We begin with the first claim. We will need that regular differentials on $A$ are closed. This is well known, but we include the argument for the convenience of the reader. Let $\omega$ be a regular differential on $A$. As is well known, $\omega$ is then translation invariant (this follows from the fact that $\Omega^1_A \cong \calO_A^2$, or more canonically $\underline\omega_A \otimes_k \calO_A$).  We want to prove that $d\omega = 0$. Using the triviality of $\calT_A$ and $\Omega^1_A$, it is enough to prove that for any two invariant vector fields $D_1, D_2$ we have 
\[d\omega (D_1, D_2) = 0\]
(thinking of $d\omega$ as an alternating bilinear form on $\calT_A$). We have
\[ d\omega (D_1, D_2) = D_1(\omega(D_2)) - D_2(\omega(D_1)) -\omega([D_1, D_2]).\]
However, as $A$ is a commutative algebraic group $\Lie(A)$ is commutative and so $[D_1, D_2] = 0$. And, as $\omega$ and $D_i$ are translation-invariant, $\omega(D_i)$ is a constant. So all the terms on the right-hand side vanish and $d\omega(D_1, D_2) = 0$.

Now, we have 
\[ d\eta = d \left(\tau^\ast(f\omega_Y)\right) + d(\tilde\omega_E) = \tau^\ast (d(f\omega_Y)) + d(\tilde\omega_E), \]
where we used functoriality of $d$. Now, holomorphic differentials on $A$ are closed and so $d(\tilde\omega_E) = 0$. Secondly,  $d(f\omega_Y) = 0$ since $Y$ is one-dimensional. Thus, $d\eta = 0$.

\medskip
\id Part (2) follows from formula (\ref{formula: cartier}). Using $\omega=\eta$ and taking for $D$ a local basis of $\scrF$, we have $\langle D, \eta \rangle =0$ and the formula becomes $\langle D, \scrC(\eta)\rangle^p = \langle D^p, \eta\rangle$. Hence, $D^p$ is a local section of~$\scrF$ if and only if $\langle D^p, \eta\rangle=0$, which holds if and only if $\langle D, \scrC(\eta)\rangle=0$. The last condition holds if and only if $\scrC(\eta)=\lambda \eta$ for $\lambda$ a meromorphic function on $A$ (possibly $0$).
\end{proof}

\subsubsection{Extensions of elliptic curves}\label{sec:extension} The notion of extension of elliptic curves in the category of commutative algebraic groups has already arisen one way or another a few times. We provide here some general comments concerning those. We thank Joelle Lim \cite{Joelle} for enlightening discussions and permission to include here some results that will appear in her thesis. 

The extension group of two elliptic curves $E, Y$ over an algebraically closed field $k$ (to fix ideas) in the abelian category of commutative algebraic groups can be defined by means of Yoneda extensions. See \cite{OortCGS} for this, as well as general background results. It satisfies familiar properties of $\Ext$ functors that are well known for abelian categories with enough injectives. In particular, given a short exact sequence of commutative algebraic groups 
\[ \xymatrix{0 \ar[r]& A \ar[r]& B \ar[r] &C \ar[r] &0}\] and a commutative algebraic group $Y$, there are long exact sequences of abelian groups 
\[\xymatrix@C=0.5cm{0 \ar[r]& \Hom(Y, A) \ar[r]& \Hom(Y, B) \ar[r] &\Hom(Y, C) \ar[r] & \Ext^1(Y,A) \ar[r]& \Ext^1(Y, B)\ar[r]& \Ext^1(Y, C), }\]
as well as 
\[ \xymatrix@C=0.5cm{ 0\ar[r]&\Hom(C, Y)\ar[r]&\Hom(B, Y)\ar[r]&\Hom(A, Y)\ar[r]& \Ext^1(C, Y)\ar[r]&\Ext^1(B, Y)\ar[r]& \Ext^1(A, Y)}.\]
In addition, $\Ext^1(Y, A)$ classifies equivalence classes of extensions 
\[ \xymatrix{0 \ar[r]& A \ar[r]& B \ar[r] &Y \ar[r] &0}\]
in the usual way. 

Applying it to our situation $0 \arr E \arr A \arr Y\arr 0$, where $E$ and $Y$ are elliptic curves, one knows that $\Ext^1(Y, E)$ is a torsion group. 
Take a positive integer $n$. The short exact sequence $0 \arr Y[n] \arr Y \arr Y \arr 0$ gives
\[\xymatrix@C=0.45cm{0 \ar[r]& \Hom(Y, E) \ar[r]^{[n]}&\Hom(Y, E) \ar[r]&\Hom(Y[n], E)\ar[r]&\Ext^1(Y, E) \ar[r]^{[n]}&\Ext^1(Y, E) \ar[r]&\Ext^1(Y[n], E),}\]
from which we get
\begin{equation}
     \xymatrix{0 \ar[r]& \Hom(Y, E)/n\Hom(Y, E) \ar[r]& \Hom(Y[n], E)\ar[r]& \Ext^1(Y, E)[n] \ar[r]& 0.}
     \end{equation}
On this basis we can perform the following analysis: Let $k$ be an algebraically closed field of characteristic $p>0$ (some considerations apply more generally, but we focus on the case of interest to us). Note that if $n=mp^r$ with $(m, p) = 1$ then $\Ext^1(Y, E)[n] = \Ext^1(Y, E)[m]\oplus \Ext^1(Y, E)[p^r]$ and it thus suffices to consider separate cases.
\begin{enumerate}
\item {\bf Case I: $(n, p) = 1$.} \;\; In this case $\Hom(Y[n], E) = \Hom(Y[n], E[n])\cong M_2(\ZZ/n\ZZ)$ is a free $\ZZ/n\ZZ$-module of rank $4$. 
\begin{enumerate}
\item[{\bf Case Ia.}] $E$ and $Y$ are not isogenous. It follows that $\Ext^1(Y, E)[n]\cong (\ZZ/n\ZZ)^4$.
\item[{\bf Case Ib.}] $E$ and $Y$ are isogenous and ordinary. In this case $\Hom(Y, E)$ is either a free $\ZZ$-lattice of rank~$1$ or of rank $2$ (the first case may happen only if $k \not \cong \bar \FF_p$). Consequently, $\Ext^1(Y, E)[n]\cong (\ZZ/n\ZZ)^r$, where $r = 2, 3$, and both cases may happen. 
\item[{\bf Case Ic.}] $E$ and $Y$ are isogenous and supersingular. In this case both can be defined over $\FF_{p^2}$ and $\Hom(Y, E)$ is a rank $4$ $\ZZ$-lattice. It follows that $\Ext^1(Y, E)[n]= 0$.
\end{enumerate}
\item {\bf Case II: $n = p^r$.} \;\; In this case $\Hom(Y[p^r], E) = \Hom(Y[p^r], E[p^r])$ depends on the nature of $E$ and $Y$. 
\begin{enumerate}
\item[{\bf Case IIa.}] $E$ and $Y$ are not isogenous and are both ordinary. Then $\Hom(Y[p^r], E[p^r])\cong \End(\mu_{p^r})\oplus \End(\ZZ/p^r\ZZ) \cong (\ZZ/p^r\ZZ)^2$ and thus $\Ext^1(Y, E)[p^r]\cong (\ZZ/p^r\ZZ)^2$.
\item[{\bf Case IIb.}] $E$ and $Y$ are not isogenous; one is ordinary and the other supersingular. In this case, as for one the $p^r$-torsion is a local-local group scheme and for the other it is a direct sum of an \'etale-local and a local-\'etale group scheme, $\Hom(Y[p^r], E[p^r])=0$ and hence $\Ext^1(Y, E)[p^r] = 0$.

\item[{\bf Case IIc.}] $E$ and $Y$ are isogenous and ordinary. In this case $\Hom(Y, E)$ is either a free $\ZZ$-lattice of rank $1$ or of rank $2$ (the first case may happen only if $k \not \cong \bar \FF_p$). The computation of $\Hom(Y[p^r], E[p^r])$ is as in case IIa. Consequently, $\Ext^1(Y, E)[p^r]\cong (\ZZ/p^r\ZZ)^d$, where $d = 0, 1$, and both cases may happen. 
\item[{\bf Case IId.}] $E$ and $Y$ are isogenous and supersingular. In this case both can be defined over $\FF_{p^2}$, but we consider them over an algebraically closed field $k$; $\Hom(Y, E)$ is a $\ZZ$-lattice of rank $4$. It is well known that $\End(E[p^\infty]) \cong \calO_p$, where $\calO_p$ is the maximal order of the unique division quaternion algebra over $\QQ_p$. It thus follows that 
\[ \Hom(Y, E)/p^r\Hom(Y, E) \cong \calO_p/p^r\calO_p\injects \Hom(Y[p^r], E[p^r]) \cong \End(E[p^r]).\]
But this injection has a non-trivial cokernel. One may calculate $\End(E[p^r])$ using Dieudonn\'e modules. Let $W_r(k)$ denote the Witt vectors of length $r$ over $k$.  Then:
\begin{multline*}
\qquad \qquad \qquad \End(E[p^r])\cong \\ \left\{
\begin{pmatrix} \alpha & p\beta^{\sigma} \\ \beta & \alpha^{\sigma} \end{pmatrix}
\;:\;
\alpha \in W_r(\FF_{p^2}),\;\;
\beta \in W_r(k),\;\;
\beta \bmod p^{r-1} \in W_{r-1}(\FF_{p^2})
\right\},
\end{multline*}
while 
\[\calO_p/p^r\calO_p\cong \left\{
\begin{pmatrix} \alpha & p\beta^{\sigma} \\ \beta & \alpha^{\sigma} \end{pmatrix}
\;:\;
\alpha, \beta \in W_r(\FF_{p^2})
\right\}.\]
It follows that $\Ext^1(Y, E)[p^r]\not\cong 0$ and in fact has cardinality equal to the cardinality of $k$.
\medskip
There are two interesting points to mention in this context; see \cite{Joelle}.
\begin{enumerate}
\item If $0 \arr E \arr A \arr Y \arr 0$ is such that $E, Y$ are supersingular and $a(A) = 1$, then the extension is non-split. For any $E, Y$ such extensions exist and can be obtained as a quotient $E^2/H$, where $H$ is a finite group scheme isomorphic to $\alpha_p$. Every such extension has order $p$.
\item In the same setting, if $a(A) = 2$ then the extension is necessarily split. 
\end{enumerate}
\end{enumerate}
\end{enumerate}

\

\subsubsection{Necessary and sufficient conditions for $p$-foliations} We keep the notation we have used so far, in particular $E, A, Y, \tau, \omega_Y$, and $n$, which denotes the order of the extension class $0 \arr E \arr A \arr Y \arr 0$ in $\Ext^1(Y, E)$. For an elliptic curve $X$ over a field of characteristic $p$, we let 
\[ h(X) = \begin{cases} 0 & $X$ \text{ supersingular,}\\ 1 & $X$ \text{ ordinary.}
\end{cases}\]
\begin{lem}\label{lem:normal form} Let $\tilde \omega_Y = \tau^\ast(\omega_Y)$, where $\omega_Y$ is chosen so that $\scrC_Y(\omega_Y) = h(Y) \cdot \omega_Y$. There exists a holomorphic differential $\tilde \omega_E$ such that $\{\tilde \omega_Y, \tilde \omega_E\} $ is a basis for $H^0(A, \Omega^1_A)$ and relative to it the Cartier operator has a normal form as in the following table, which covers all cases.

\

\begin{center}
\begin{tabular}{l|l|l|l|}
$n$ & $E, Y$& name & $\scrC_A$\\ \hline
$(n, p) = 1$ & any & $\text{\rm Case I}$ & $\left( \begin{smallmatrix} h(Y) & 0 \\ 0 & h(E) \end{smallmatrix} \right)$ \\ \hline
$p\vert n $ & both ordinary & $\text{\rm Case II}_\text{\rm ord}$ & $\left( \begin{smallmatrix} h(Y) & 0 \\ 0 & h(E) \end{smallmatrix} \right)$\\ \hline
$n=p $ & both supersingular & $\text{\rm Case II}_\text{\rm ssg}$ & $\small{\left( \begin{matrix} 0 \;\; & 1 \\ 0\;\;  &0 \end{matrix} \right)}$
\end{tabular}
\end{center}
\end{lem}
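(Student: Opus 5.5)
The plan is to study the $\sigma^{-1}$-linear endomorphism $\scrC_A$ of the two-dimensional space $H^0(A,\Omega^1_A)$. Regular forms are closed (Proposition~\ref{prop:p closed in terms of C}), so this makes sense. By Lemma~\ref{functoriality of Cartier} applied to $\tau$, we have $\scrC_A(\tilde\omega_Y)=\tau^\ast\scrC_Y(\omega_Y)=h(Y)\tilde\omega_Y$. Hence the line $\underline{\omega}_Y\subset\underline{\omega}_A$ is stable. Applying Lemma~\ref{functoriality of Cartier} to the inclusion $\iota\colon E\arr A$, for any $\tilde\omega$ we get $\iota^\ast\scrC_A(\tilde\omega)=\scrC_E(\iota^\ast\tilde\omega)$. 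So on the quotient $\underline{\omega}_E$ the operator is $\scrC_E$. Choose $\omega_E$ with $\scrC_E(\omega_E)=h(E)\omega_E$. For any lift $\tilde\omega_E$ this gives
\[ \scrC_A(\tilde\omega_E)=c\,\tilde\omega_Y+h(E)\tilde\omega_E, \qquad c\in k .\]
The whole problem is then to normalize $c$ by changing the lift $\tilde\omega_E\mapsto\tilde\omega_E+a\tilde\omega_Y$, and to show that $c$ can be killed exactly when the extension class $\zeta$ is prime to $p$ or the curves are ordinary.

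\emph{Linear algebra step.} Under the change of lift, $c$ becomes $c+a^{1/p}h(Y)-a\,h(E)$. If exactly one of $E,Y$ is ordinary, the equation $a^{1/p}h(Y)-ah(E)=-c$ is linear in $a$ or in $a^{1/p}$, so it is solvable. If both are ordinary, it is the Artin--Schreier type equation $a^{1/p}-a=-c$, solvable since $k$ is algebraically closed. This covers $\text{Case II}_{\rm ord}$, all of Case~I except the supersingular--supersingular subcase, and it shows Case~IIb cannot arise with $p\mid n$ (cf.\ the Ext computations in \S\ref{sec:extension}). If both are supersingular, $c$ is independent of the lift. So it remains to show two things: $c=0$ when $(n,p)=1$, and $c\neq0$ when $p\mid n$, where in the latter case $n=p$ by the cited results of \cite{Joelle}. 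When $c\neq0$ we rescale: replacing $\omega_E$ by $\mu\omega_E$ multiplies $c$ by $\mu^{1/p}$, so we can take $c=1$.

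\emph{Main obstacle: both curves supersingular.} First suppose $(n,p)=1$. Lemma~\ref{Lemma:product up to isogeny} gives an isogeny $g\colon E\times Y\arr A$ of degree $n^2$ prime to $p$. Then $g$ is étale and $g^\ast$ is an isomorphism on invariant differentials. Functoriality (Lemma~\ref{functoriality of Cartier}) transports $\scrC_A$ to $\scrC_{E\times Y}$, which is diagonal, in fact zero. Since $g$ restricted to $E$ is the inclusion and $\tau\circ g=[n]\circ\pi_Y$, the basis $\tilde\omega_Y,\tilde\omega_E$ pulls back compatibly, which gives $c=0$.

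Conversely, suppose $c=0$, so $\scrC_A=0$ on $\underline{\omega}_A$. Then the $a$-number of $A$ is $2$ (for instance via Oda's identification of $\scrC_A$ with the Verschiebung on $H^0$ of the Dieudonné module). A superspecial surface containing $E$ then has a split extension, by point (b) of \S\ref{sec:extension} \cite{Joelle}, so $n=1$. Contrapositively, $p\mid n$ forces $c\neq0$.

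This last identification is the step I expect to be hardest: linking the vanishing of $\scrC_A$ to $a(A)=2$ and hence to splitting. I would cite Oda and the results of Lim for it rather than reprove them.
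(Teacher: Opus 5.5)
Your proof is correct. It differs from the paper's mainly in the cases where one of $E,Y$ is ordinary.

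\textbf{Your organization.} You use functoriality for both $\tau$ and the inclusion $\iota\colon E\arr A$. This shows that for any lift $\tilde\omega_E$ of an eigenform of $\scrC_E$, the matrix of $\scrC_A$ is upper triangular with diagonal $(h(Y),h(E))$. You then kill the single off-diagonal entry $c$ by changing the lift. This settles, in one stroke, every configuration in which at least one curve is ordinary, independently of $n$. The isogeny of Lemma~\ref{Lemma:product up to isogeny} is needed only in the supersingular--supersingular subcase of Case I.

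\textbf{The paper's organization.} The paper handles all of Case I at once: it sets $\tilde\omega_E=\gamma^\ast\omega_E$ via the prime-to-$p$ isogeny $\gamma\colon A\arr E\times Y$. Since $\tilde\omega_E$ is then pulled back from $E$, diagonality follows immediately from functoriality. For Case $\text{II}_{\rm ord}$ it starts from an arbitrary complement $\alpha$ of $\tilde\omega_Y$ and uses only that $\scrC_A$ is invertible. It never identifies the second diagonal entry with $h(E)$ via restriction to $E$; it normalizes it by solving an Artin--Schreier type system.

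\textbf{The case $p\mid n$, both supersingular.} Here your argument is the paper's in contrapositive form. The paper says order $p$ forces $a(A)=1$ by \cite{Joelle}, hence $\scrC_A\neq 0$ on $H^0(A,\Omega^1_A)$. You say $\scrC_A=0$ forces $a(A)=2$, hence the extension splits. Both rest on the same link between $\scrC_A$ and the $a$-number and on Lim's results.

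\textbf{What your route buys.} It makes explicit that the extension class matters only when both curves are supersingular. In that case $c$ does not depend on the lift, and its vanishing is exactly the condition $\scrC_A=0$, i.e.\ $a(A)=2$.

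\textbf{One small correction.} The fact that $p\mid n$ cannot occur with exactly one curve ordinary comes from the Ext computation (Case IIb). Your linear algebra does not show it, although it does show the diagonal form would hold there anyway.
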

\begin{proof}
 Assume that $(n, p) = 1$. Then the isogeny of Lemma~\ref{Lemma:product up to isogeny} induces an isomorphism $\gamma^\ast \colon \pi_E^\ast\Omega^1_E \oplus \pi_Y^\ast\Omega^1_Y \overset{\cong}{\Arr} \Omega^1_A$. We define $\tilde{\omega}_E = \gamma^\ast \omega_E$, where $\omega_E$ is a nonzero holomorphic differential on $E$ such that $\scrC_E(\omega_E) = h(E) \cdot \omega_E$. Then, using the functoriality of the Cartier operator given in Lemma~\ref{functoriality of Cartier}, we find
\[ \scrC_A(\eta) = \tau^\ast (\scrC_Y(f\omega_Y)) + \gamma^\ast\scrC_E(\omega_E), \]
which establishes Case I. 

Consider now Case ${\rm II}_{\rm ord}$. Using Lemma~\ref{functoriality of Cartier}, we find $\scrC_A(\tilde \omega_Y) = \tau^\ast(\scrC_Y(\omega_Y)) = \tilde \omega_Y$. Let $\alpha$ be a holomorphic differential on $A$ such that $\{ \tilde \omega_Y, \alpha\}$ is a basis for $H^0(A, \Omega^1_A)$. As $\scrC_A$ is invertible on $H^0(A, \Omega^1_A)$, in this basis $\scrC_A(\alpha) = a \tilde \omega_Y + b\alpha$, for some $a, b \in k, b\neq 0$. 
For $c, d \in k, d \neq 0$, $\{ \tilde \omega_Y, c\tilde \omega_Y+ d\alpha\}$ is also a basis, and
\[ \scrC_A( c\tilde \omega_Y+ d\alpha) = (c^{1/p}+ d^{1/p}a)\tilde \omega_Y + d^{1/p}b\alpha. \]
This equals $c\tilde \omega_Y + d \alpha$ if and only if both $d^{p-1} = b^p$ and $c^p - c = da^p$ are satisfied with $d \neq 0$. As~$k$ is algebraically closed, such solutions $c, d$ always exist. Thus, Case ${\rm II}_{\rm ord}$ follows.

In any case where $p\vert n$, which is not Case ${\rm II}_{\rm ord}$, the results of \S \ref{sec:extension} imply that both $E$ and $Y$ are supersingular and, moreover, $n=p$. Thus, it only remains to consider Case ${\rm II}_{\rm ssg}$. 

As the extension has order $p$, $A$ is supersingular and not superspecial \cite{Joelle}. This implies that on $H^0(A, \Omega^1_A)$, the Cartier operator satisfies $\scrC_A^2 = 0$, but $\scrC_A \neq 0$. On the other hand, as~$Y$ is supersingular, $\scrC_A(\tilde \omega_Y) = \tau^\ast(\scrC_Y(\omega_Y)) = 0$. Taking any holomorphic differential $\tilde \omega_E$ on $A$ such that $\{ \tilde \omega_Y, \tilde \omega_E\}$ is a basis for $H^0(A, \Omega^1_A)$, one must have $\scrC_A(\tilde \omega_E) = a \tilde \omega_Y$ for some $a\in k^\times$. By rescaling $\tilde \omega_E$, we may assume $a = 1$, and Case ${\rm II}_{\rm ssg}$ follows. 
\end{proof}

\

\begin{prop}\label{cor: when are foliations p closed} Choose a basis $\{\tilde \omega_Y, \tilde \omega_E\}$ such that $\scrC_A$ is in normal form  as in Lemma~\ref{lem:normal form}.  A turbulent foliation defined by $\eta = \tau^\ast(f\omega_Y) + \tilde \omega_E$ is $p$-closed if and only if one of the following holds:
\begin{enumerate} 
\item Case I, $E$ ordinary and $\scrC_Y(f\omega_Y) = f\omega_Y$; equivalently, \cite[Proposition 7]{Cartier}, $f\omega_Y$ is a logarithmic differential, i.e. of the form $\frac{dg}{g}$ for a function $g \in k(Y)^\times$.

\item Case I, $E$ supersingular and $\scrC_Y(f\omega_Y) = 0$; equivalently, \cite[Proposition 8]{Cartier}, $f\omega_Y$ is an exact differential, i.e. of the sort $dg$ for a function $g \in k(Y)$.

\item Case $\text{II}_{ord}$, $E$ ordinary and $\scrC_Y(f\omega_Y) = f\omega_Y$.

\item Case $\text{II}_{ssg}$ and $\scrC_Y(f\omega_Y) + \omega_Y =0$. In this case, one may write $\omega_Y = dh$ and then $f$ is a solution if and only if $f = -h^{p-1} + g_{p-2}^ph^{p-2}+ \cdots + g_0^p$ for any meromorphic functions $g_i \in k(Y)$.
\end{enumerate}
\end{prop}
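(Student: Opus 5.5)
Combine Proposition~\ref{prop:p closed in terms of C}(2) with the normal form of Lemma~\ref{lem:normal form}. The foliation is $p$-closed exactly when $\scrC_A(\eta)=\lambda\eta$ for some meromorphic $\lambda$ on $A$. So the first step is to compute $\scrC_A(\eta)$ explicitly in the basis $\{\tilde\omega_Y,\tilde\omega_E\}$ of $\Omega^1_{k(A)}$ over $k(A)$.

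By Lemma~\ref{functoriality of Cartier} applied to the dominant map $\tau$, we get $\scrC_A(\tau^\ast(f\omega_Y))=\tau^\ast(\scrC_Y(f\omega_Y))$. Write $\scrC_Y(f\omega_Y)=g\omega_Y$ with $g\in k(Y)$. By additivity, $\scrC_A(\eta)=\tau^\ast(g)\tilde\omega_Y+\scrC_A(\tilde\omega_E)$, and $\scrC_A(\tilde\omega_E)$ is read off from the table:
\begin{itemize}
\item in Cases I and $\text{II}_{\rm ord}$ it is $h(E)\tilde\omega_E$;
\item in Case $\text{II}_{\rm ssg}$ it is $\tilde\omega_Y$.
\end{itemize}

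Since $\tilde\omega_Y,\tilde\omega_E$ are linearly independent over $k(A)$, the condition $\scrC_A(\eta)=\lambda\eta$ becomes a pair of coefficient equations.
\begin{itemize}
\item When $E$ is ordinary, comparing $\tilde\omega_E$-coefficients gives $\lambda=1$. Then the $\tilde\omega_Y$-coefficient gives $\tau^\ast g=\tau^\ast f$, i.e.\ $\scrC_Y(f\omega_Y)=f\omega_Y$, because $\tau^\ast$ is injective on function fields. This covers cases (1) and (3).
\item When $E$ is supersingular in Case I, the $\tilde\omega_E$-coefficient gives $\lambda=0$, hence $\scrC_Y(f\omega_Y)=0$. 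This is case (2).
\item In Case $\text{II}_{\rm ssg}$, the $\tilde\omega_E$-coefficient again gives $\lambda=0$, and then $\tau^\ast(g)+1=0$, i.e.\ $\scrC_Y(f\omega_Y)+\omega_Y=0$. This is case (4).
\end{itemize}
The converses are immediate from the same computation. The equivalences with logarithmic and exact differentials are the cited results of Cartier.

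It remains to solve the equation in case (4). Since $Y$ is supersingular, $\scrC_Y(\omega_Y)=0$, so by Cartier $\omega_Y$ is exact: write $\omega_Y=dh$ with $h\in k(Y)$ non-constant. Then $dh\neq0$, so $h\notin k(Y)^p$, and $h$ is a $p$-basis of the curve's function field. Expand $f=\sum_{i=0}^{p-1}g_i^p h^i$ uniquely as in Example~\ref{exa: cartier on curve}. That example gives $\scrC_Y(f\,dh)=g_{p-1}\,dh$. Hence the equation $\scrC_Y(f\omega_Y)+\omega_Y=0$ holds iff $g_{p-1}=-1$, i.e.\ $g_{p-1}^p=-1$, with $g_0,\dots,g_{p-2}$ arbitrary. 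This yields exactly the stated form $f=-h^{p-1}+g_{p-2}^ph^{p-2}+\cdots+g_0^p$.

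The main point requiring care is the first step's reliance on functoriality for meromorphic forms together with linear independence over $k(A)$. Both are supplied by Lemma~\ref{functoriality of Cartier} and the splitting in the proposition describing turbulent foliations, so no real obstacle is expected.
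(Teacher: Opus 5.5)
Your proposal is correct and follows the paper's proof essentially step for step. You compute $\scrC_A(\eta)$ via functoriality along $\tau$ and the normal form, compare $\tilde\omega_E$-coefficients to force $\lambda=h(E)$, read off the condition on $\scrC_Y(f\omega_Y)$, and in Case $\text{II}_{\rm ssg}$ use $h$ as a $p$-basis together with Example~\ref{exa: cartier on curve}. Your added justifications (linear independence of $\tilde\omega_Y,\tilde\omega_E$ over $k(A)$, injectivity of $\tau^\ast$ on function fields, and $dh\neq 0$ implying that $h$ is a $p$-basis) make explicit what the paper leaves implicit.
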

\begin{proof} Proposition~\ref{prop:p closed in terms of C} showed that $\scrF$ is $p$-closed if and only if $\scrC_A(\eta) = \lambda \eta$ for some $\lambda \in k(A)$. Using Lemma~\ref{functoriality of Cartier} and Lemma~\ref{lem:normal form}, we calculate that 
\[ \scrC_A(\eta) = \begin{cases}  \tau^\ast(\scrC_Y(f\omega_Y)) + h(E) \cdot \tilde \omega_E, & \text{Case I and Case $\text{II}_{\text{ord}}$};\\
\tau^\ast(\scrC_Y(f\omega_Y)) + \tilde \omega_Y, & \text{Case $\text{II}_{\text{ssg}}$.}
    \end{cases}
\]
Therefore, $\scrC_A(\eta) = \lambda \eta$ implies $\lambda = h(E)$. We now consider each case in turn. 
\begin{itemize}
    \item Case I, $E$ ordinary and Case $\text{II}_{\rm ord}$. The condition is then $\scrC_Y(f\omega_Y) = f\omega_Y$.
    \item Case I, $E$ supersingular. The condition is then $\scrC_Y(f\omega_Y) = 0$.
    \item Case $\text{II}_{\rm ssg}$, where $\lambda = 0$. Since $\scrC_Y(\omega_Y) = 0$, $\omega_Y = dh$ for some $h\in k(Y)$. The condition then is $\scrC_Y(f\omega_Y) = -\omega_Y$. Referring to Example~\ref{exa: cartier on curve}, we see $g_{p-1} = -1$ and that is the only condition. 
\end{itemize}
\end{proof}

\

\subsection{Construction of $p$-closed foliations}\label{123}
Based on Proposition~\ref{cor: when are foliations p closed} we therefore change our point of view. In cases (1)--(3) of loc. cit., we start from a logarithmic or an exact differential, as the case may be, define~$f$ accordingly and get turbulent smooth $p$-closed foliations. 
In case (4), we proceed differently and our results are not as complete. 

\subsubsection{Case I and $E$ supersingular} Start then with a non-scalar function $g\in k(Y)$ such that $f = dg/\omega_Y$ is non-constant. This then defines a foliation of the type we want by letting
\[ \eta = \tau^\ast(f \omega_Y) + \tilde \omega_E.\]
We get a foliation $\scrF$ then with poles determined by the effective divisor $(dg/\omega_Y)_\infty =: \sum_{P\in Y} n_P[P]$ on~$Y$. Therefore, given an effective divisor $\delta = \sum_{P\in Y} n_P[P]$ on $Y$, we want to determine if it is of the form $(dg/\omega_Y)_\infty$. 

\begin{thm}\label{p closed thm 1} Assume that $E$ is supersingular and we are in Case I (Proposition~\ref{cor: when are foliations p closed} (2)). Let $\delta$ be an effective divisor on $Y$ of degree $d\geq 2$. If the divisor $\delta = \sum_{P\in Y} n_P[P] = (dg/\omega_Y)_\infty$ for some function $g\in k(Y)$, then for every point~$P$, $n_P \not\equiv 1 \pmod{p}$. 
The converse holds when $\delta$ is not of the form $2[P]$, and also holds for $\delta = 2[P]$ if (and only if) $Y$ is ordinary.

Therefore, if $\delta$ is of the form $ \sum_{P\in Y} n_P[P] $ and, for some $P$, $n_P \equiv 1 \pmod{p}$, then any corresponding foliation defined by a point $(\delta, f) \in \WW^\circ_{Y, d}$ is \emph{not} $p$-closed. If, on the other hand, $\delta$ is of the form $ \sum_{P\in Y} n_P[P] $ and, for all~$P$, $n_P \not\equiv 1 \pmod{p}$, then there exists a $p$-closed foliation defined by some point $(\delta, f) \in \WW^\circ_{Y, d}$,
unless $\delta = 2[P]$ and~$Y$ is supersingular.
\end{thm}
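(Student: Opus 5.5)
Since $\omega_Y$ is a nowhere vanishing holomorphic differential, the divisor $(dg/\omega_Y)_\infty$ is the polar divisor of the exact differential $dg$. The whole statement is therefore about which effective divisors $\delta$ occur as polar divisors of exact meromorphic differentials on $Y$, and the plan is a local-then-global argument. The second paragraph of the theorem (the translation to foliations) then follows at once from Proposition~\ref{cor: when are foliations p closed}(2) and Theorem~\ref{thm classtrbfol}. Indeed, $f\omega_Y = dg$ with $(f)_\infty = \delta$ gives the point $(\delta, f)\in \WW^\circ_{Y,d}$. Here $f$ is non-constant because $d\ge 2$ and $\omega_Y$ has no poles.

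\emph{Necessity.} Let $P$ be a point, $t$ a uniformizer at $P$, and write $g = \sum_m a_m t^m$ locally. Then $dg = \sum_m m a_m t^{m-1}dt$. Terms with $p\mid m$ vanish, so the order of the pole of $dg$ at $P$ is $m_0+1$, where $m_0$ is the largest $m$ with $a_m\neq 0$ and $p\nmid m$. In particular $n_P = m_0+1$ with $p\nmid m_0$, so $n_P\not\equiv 1 \pmod p$. We must also rule out $n_P=1$ (the case $m_0=0$ does not arise, since then there would be no pole). A simple pole of $dg$ would need a $t^{-1}dt$ term, which is impossible because $m a_m$ vanishes for $m=0$; this is the residue-zero property of exact differentials. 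This settles the first claim.

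\emph{Sufficiency.} Given $\delta=\sum n_P[P]$ with all $n_P\not\equiv 1\pmod p$ (so $n_P\ge 2$ wherever $n_P>0$), I would construct $g$ with prescribed poles of order $n_P-1$ at each $P$, that is $g\in H^0(Y,\calO_Y(\sum (n_P-1)[P]))$. The coefficient of $t^{-(n_P-1)}$ in $g$ must be nonzero, and then $dg$ has a pole of order exactly $n_P$ at $P$ since $p\nmid n_P-1$. Riemann--Roch gives $\dim H^0 = \sum_P (n_P-1) =: e$ when $e\ge 1$. The functions with a pole of exact order $n_P-1$ at a given $P$ form the complement of a hyperplane, namely $H^0(\calO(\sum(n_Q-1)[Q] - [P]))$, which has dimension $e-1$ when $e\ge 2$. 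A finite union of proper subspaces cannot cover the space, so a suitable $g$ exists when $e\ge 2$. This step also gives $(dg)_\infty=\delta$ exactly, because away from $\operatorname{supp}\delta$ the function $g$ is regular.

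\emph{The obstacle: $e=1$.} Since every nonzero $n_P$ is at least $2$, we have $e=1$ only for $\delta = 2[P]$. Then $H^0(\calO_Y([P]))=k$ consists of constants, and no function has a simple pole. I would therefore handle $\delta=2[P]$ directly and classify all $g$ with $(dg)_\infty = 2[P]$. Necessity forces $m_0 = 1$, so $g = c^p\cdot(\text{stuff}) + a_1 t^{-1}+\dots$ locally; more precisely, modulo $k(Y)^p$ one may write $g = g_1 + h^p$, where $g_1$ contributes the $t^{-1}$ term. I would instead argue via the Cartier operator. The differential $\theta = dg$ is exact with only a double pole at $P$ and no residue, so it lies in the $2$-dimensional space $H^0(\Omega^1_Y(2[P]))$, all of whose elements have zero residue. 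This space contains $\omega_Y$ and one more form $\theta_0 = x\,\omega_Y$ with $(x)_\infty = 2[P]$; it is the space of differentials of the second kind, and modulo exact forms it maps onto $H^1_{dR}(Y)$. The form $\theta_0$ is exact if and only if $\scrC_Y(\theta_0)=0$. I expect the Cartier operator on this space, in the basis $\omega_Y,\theta_0$, to be a matrix whose kernel is described by Hasse invariant considerations. If $Y$ is ordinary, $\scrC_Y(\omega_Y)=\omega_Y$, and $\scrC_Y$ acts on $H^1_{dR}$ with a one-dimensional kernel complementary to $\omega_Y$; since $\omega_Y$ maps isomorphically to its image in $H^1_{dR}$, some $\theta_0 + c\,\omega_Y$ is exact and has a genuine double pole, giving existence. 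If $Y$ is supersingular, $\scrC_Y(\omega_Y)=0$, so $\omega_Y$ is exact in $H^1_{dR}$, and the kernel of $\scrC$ on $H^1_{dR}$ must be exactly the line spanned by $\omega_Y$ by Cartier's isomorphism on the 2-dimensional $H^1_{dR}$. Hence no form with a true double pole is exact, giving non-existence. Checking this Cartier-on-$H^1_{dR}$ computation carefully, for instance in Weierstrass coordinates $x\,dx/y$, is the main technical step.
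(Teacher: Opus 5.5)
Your necessity argument and your Riemann--Roch construction for $\delta\neq 2[P]$ are the same as the paper's. There is one slip: with $g=\sum a_m t^m$, the pole order of $dg$ is $1-m_0$ for the \emph{most negative} $m_0$ with $a_{m_0}\neq 0$ and $p\nmid m_0$, not $m_0+1$ for the largest; the conclusion is unaffected.

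The gap is in the case $\delta=2[P]$, the only non-routine part of the theorem. You claim that $V:=H^0(Y,\Omega^1_Y(2[P]))$ ``modulo exact forms maps onto $H^1_{\rm dR}(Y)$'', and that for supersingular $Y$ ``$\omega_Y$ is exact in $H^1_{\rm dR}$''. Both are false in characteristic $p$. They identify meromorphic exactness ($\theta=dg$ with $g\in k(Y)$, i.e.\ $\scrC_Y(\theta)=0$) with vanishing of the de Rham class. Since $\scrC_Y$ maps $V$ into the line $k\,\omega_Y$ (the paper checks this via $\scrC_Y(t^{-2}dt)=0$ at $P$; you do not), $V\cap dk(Y)$ is at least one-dimensional for every $Y$. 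So $V/(V\cap dk(Y))$ cannot surject onto the $2$-dimensional $H^1_{\rm dR}(Y)$, and $[\omega_Y]\neq 0$ in $H^1_{\rm dR}(Y)$ for every $Y$. Taken literally, your setup forces $V\cap dk(Y)=0$, which contradicts your own ordinary-case conclusion. Above all, the decisive supersingular fact is only asserted (``by Cartier's isomorphism'') and left as the main technical step. That fact is that the only exact forms in $V$ are multiples of $\omega_Y$, i.e.\ $\scrC_Y(x_P\omega_Y)\neq 0$, and proving it is what the paper's treatment of this case consists of.

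The missing ingredient is surjectivity of $\scrC_Y|_V\colon V\to k\,\omega_Y$; with it your idea becomes a genuinely different, valid proof.
\begin{itemize}
\item In correct de Rham terms: $V\to H^1_{\rm dR}(Y)$ is an isomorphism. Meromorphic exactness of $\theta\in V$ means $[\theta]$ lies in the kernel of the conjugate-filtration (Cartier) map $H^1_{\rm dR}(Y)\to H^0(Y^{(p)},\Omega^1)$, not that $[\theta]=0$. That map is onto because $H^2(Y^{(p)},\calO)=0$.
\item Elementarily: given a holomorphic $\omega$, choose $\theta\in\Omega^1_{k(Y)}$ with $\scrC_Y(\theta)=\omega$. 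As $\scrC_Y(\theta)$ is regular, at each pole $R$ of $\theta$ one has $\theta=dh_R+(\text{regular at }R)$ for a Laurent tail $h_R$. Since $H^1(Y,\calO_Y([P]))=0$, there is $u\in k(Y)$ realizing these polar parts up to an extra simple pole at $P$. Then $\theta-du\in V$ and $\scrC_Y(\theta-du)=\omega$.
\end{itemize}
Hence $\ker(\scrC_Y|_V)=V\cap dk(Y)$ is a line for every $Y$. It differs from $k\,\omega_Y$ when $Y$ is ordinary, giving existence, and equals $k\,\omega_Y$ when $Y$ is supersingular, giving non-existence.

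The paper instead reduces to the scalar $\gamma$ with $\scrC_Y(x_P\omega_Y)=\gamma\,\omega_Y$. It proves $\gamma\neq 0$ via Manin's computation in Legendre form, the congruence $B\equiv\lambda(A+2(\lambda-1)A')$, and Igusa's theorem that the Hasse polynomial has simple roots, checking $p=2,3$ by hand. The repaired argument is uniform in $p$ and needs no Igusa.
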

\begin{proof} Given a point $P$, let $x$ be a local uniformizer at $P$. We can expand $g$ and $\omega_Y$ in the form 
\[ g = a_nx^n + a_{n+1}x^{n+1} + a_{n+2}x^{n+2} + \dots, \quad \omega_Y = u(x) dx, \]
for a uniformizer $x$ and a unit $u(x) = \alpha^{-1} + h.o.t.$ of the local ring of $P$, and $a_n \neq 0$. Since $dg/\omega_Y$ for $n\geq 0$ is holomorphic at $P$, we may assume $n<0$ and then
\[ dg/\omega_Y = a_n n \alpha x^{n-1}  + h.o.t.\]
has valuation at $x$ equal to $n-1$ if and only if $p \nmid n$. More precisely, the valuation will be equal to $n+i-1$ for the first $i$ such that $p\nmid  n+i$ and $a_{n+i} \neq 0$. Thus, $n_P -1 = -(n+i-1) -1 = -(n+i)\not\equiv 0 \pmod{p}$.

Conversely, given such a divisor $\delta$, consider the divisor $\sum_{P}(n_P - 1)[P] $. By Riemann-Roch, excluding the case $\delta = 2[P]$, we can find a function $g$ such that $(g)_\infty =\sum_{P}(n_P - 1)[P]$ (it is a function in $\calO_Y(\sum_{P}(n_P - 1)[P] )$ and not in $\calO_Y(\sum_{P}(n_P - 1)[P] - [Q])$ for any $Q$ in the support of $\sum_{P}(n_P - 1)[P]$). Then $\delta = (dg/\omega_Y)_\infty$.  

Finally, for $\delta = 2[P]$ we proceed differently. We can rephrase the question as follows: when is there an exact differential (to wit, $dg$) with polar divisor $2[P]$? Such a differential is a global section of $H^0(Y, \Omega^1_Y(2[P]))$, which is a $2$-dimensional space. Let $x_P$ be a global  function on $Y$ with pole of order $2$ at $P$ and otherwise regular. Then, $H^0(Y, \Omega^1_Y(2[P]))$ has a basis $\omega_Y, x_P\omega_Y$ and so we ask: when is a differential of the form $\nu = \alpha\omega_Y + \beta x_P\omega_Y$, with $\alpha, \beta \in k, \beta \neq 0$, an exact differential? By Cartier, this is the case if and only if $\scrC_Y(\nu) = 0$. 

Remark that a uniformizer $t$ at a point $Q$ on $Y$ is a $p$-basis and so we can use the expression for the Cartier operator in \S \ref{subsec:Cartier} also as a local expression at $Q$. At every point $Q\neq P$ of $Y$ the differential $x_P \omega_Y$ is regular, and so $\scrC_Y(x_P\omega_Y)$ is regular at $Q$. On the other hand, letting $t$ be a local uniformizer at $P$ we have $x_P\omega_Y = (a_{-2}t^{-2} + a_0 + h.o.t.)dt$ and $\scrC_Y(t^{-2}dt) = \scrC_Y(t^{p-2}t^{-p}dt) = t^{-1}\scrC_Y(t^{p-2}dt) = 0$. Thus, $\scrC_Y(x_P\omega_Y)$ is regular at $P$ too and so there exists $\gamma \in k$ such that $\scrC_Y( x_P \omega_Y) = \gamma \omega_Y$. We now find
\[ \scrC_Y(\nu) = \alpha^{1/p} \scrC_Y(\omega_Y) + \gamma \beta^{1/p} \omega_Y. \]

If $Y$ is ordinary, we can choose $\omega_Y$ such that $\scrC_Y(\omega_Y) = \omega_Y$ and solve the equation by taking $\beta = 1$ and $\alpha^{1/p} = -\gamma$. Thus, if $Y$ is ordinary and $\delta = 2[P]$ there exists a $p$-closed foliation defined by some point $(\delta, f) \in \WW^\circ_{Y, d}$.

On the other hand, if $Y$ is supersingular, $\scrC_Y(\omega_Y) = 0$ and so we can find $\nu$ such that $\scrC_Y(\nu) = 0$ if and only if $\gamma = 0$. We claim that this is not possible, namely that \[\scrC_Y(x_P\omega_Y) \neq 0.\]

To simplify, we write $\omega = \omega_Y, \scrC = \scrC_Y$. The cases of $p=2, 3$ are easy to verify directly.
We perform the computation using Manin's method \cite{Manin}. We use the Legendre equation $y^2 = f(x) = x(x-1)(x-\lambda)$ ($p>2$) and  find that 
\[C(\omega) = A^{1/p} \omega, \qquad C(x \omega) = B^{1/p} \omega,\]
where $A$ is the coefficient of $x^{p-1}$ in $f(x)^m$ and $B$ is the coefficient of $x^{p-2}$ in that polynomial. Again, $A$ is proportional to the Hasse invariant. Both $A$ and $B$ have elementary expressions:
\[A = (-1)^m \sum_{b=0}^{m} \binom{m}{b}^{2} \lambda^{b},
\qquad
B = (-1)^{m+1} \sum_{b=1}^{m} \binom{m}{b-1}\binom{m}{b} \lambda^{b},
\qquad m = \tfrac{p-1}{2}.\]
They satisfy the following relation
\[B = \lambda(A + 2(\lambda-1)A'(\lambda)) \pmod{p},\] 
where $A'$ is the derivative of $A$ relative to the variable $\lambda$. There are more and less sophisticated methods to prove this relation. The simplest we found, much simpler than Claude's original suggestion, is to use that the relation is equivalent to the identities
\[-\binom{m}{b-1}\binom{m}{b} \equiv (2b-1)\binom{m}{b-1}^{2} - 2b\binom{m}{b}^{2} \pmod{p},\]
which one might prove by patiently expanding both sides (the first step would be to multiply through by $\frac{(b! \cdot (m-b+1)!)^{2}}{(m!)^2}$).

Since $Y$ is supersingular, $A = 0$ and the relation simplifies to $B =  2\lambda(\lambda-1)A'(\lambda) \pmod{p}$. By a result of Igusa, the supersingular polynomial $A(\lambda)$ has simple roots at the supersingular $\lambda$'s and so $B$ is not zero.  
\end{proof}

\begin{exa} Let $d \geq 3$ and $p>2$. Consider $\WW^\circ_{Y, d} \arr \Sym^dY$. For a general point $\delta$ of $\Sym^dY$ the divisor~$\delta$ has multiplicity $1$ at every point $P$ of its support and thus the corresponding foliations are not $p$-closed. To simplify, suppose that $d \leq p$. The locus $Z$ where all the multiplicities are not congruent to~$1$ modulo $p$ is in general reducible, but it has a unique component of maximal dimension, which is equal to $\lfloor d/2\rfloor$ (for $d$ odd, choose one point to be of multiplicity $3$ and the rest of multiplicity $2$; for $d$ even, choose all points to have multiplicity $2$). In this case, $Z$ is closed and the locus of $p$-closed foliations in $\WW^\circ$ surjects onto $Z$ (and it intersects properly and non-trivially each fiber over $Z$). But for $d = p+1$ the locus $Z$ is not closed because $(p+1)[Q]$, for any point $Q$ of $Y$, is not in $Z$ but is a limit point of every component of $Z$.

\end{exa}

\begin{exa}
For another example, take $p=3$, and let $Y_\lambda$ be the elliptic curve
\[Y_\lambda:\quad  y^2 = x(x-1)(x-\lambda), \]
where $\lambda \in \fpbar, \lambda \neq 0, 1$. It is a non-singular elliptic curve. Assume that $Y$ is supersingular. As $(p-1)/2 = 1$, one finds immediately that 
\[ \calC_Y(\omega_Y) = -(1+\lambda)^{1/3} \omega, \quad \calC_Y(x\omega) = \lambda^{1/3}\omega.\]
Incidentally, that shows that $Y_\lambda$ is supersingular if and only if $\lambda = -1$.

Take $d = 4$. By our results, there exist $p$-closed foliations over $(\delta, f)$ where $\delta \in \Sym^4 Y$ if and only if the support of $\delta$ consists of exactly $2$ points. That is, $\delta$ must be of the form $2P + 2Q$ for some points $P \neq Q$ on $Y_\lambda$. The set $Z$ is isomorphic to $(Y^2 - \Delta(Y))/\Sigma_2$, where $\Delta$ is the image of the diagonal embedding  $Y \injects Y^2$. In particular, $Z$ is not closed. 

We want to understand the fibers of the morphism $\WW^\circ_{Y, 4} \arr \Sym^4 Y$ over a point $\delta\in Z$. Writing $\delta = 2[P]+2[Q]$, the fiber over $\delta$ of $\WW_{Y, 4} \arr \Sym^4 Y$ is $H^0(Y, 2[P]+2[Q])$. Using translation symmetries, we may assume for convenience that $[P] = 0_{Y_\lambda}$. This is a $4$-dimensional vector space that has a basis of the form $\{1, x, g, h\}$ where $g, h$ are functions with the property that $g$ has polar divisor $2[P]+[Q]$ and $h$ has a polar divisor $2[P]+2[Q]$. We further normalize $x$ and~$h$ so that their expansions in terms of a local parameter $t$ at $[0_Y]$ begin with $t^{-2}$. For $b, d \in k$, consider then $f = bx + dh\in H^0(Y, 2[P]+2[Q])$. It has polar divisor $2[P]+ 2[Q]$ as long as $d(b+ d) \neq 0$. On the other hand,
\[ \calC_Y(f\omega) = -b^{1/3} \omega + d^{1/3} \calC_Y(h\omega).\]
It follows that whether $\calC_Y(h\omega) = 0$ or not, one can choose $b, d$ such that both $d(b+ d) \neq 0$ and $\calC_Y(f\omega) \neq 0$, thus showing that in the fiber of $\WW^\circ_{Y, 4}$ over $\delta = 2[P]+2[Q]$ (for $\delta \in Z$, in fact!) there are points corresponding to $p$-closed foliations
and there are points corresponding to foliations that are not $p$-closed. 
\end{exa}

\

\subsubsection{Case I and $E$ ordinary, and Case $\text{II}_{\text{ord}}$} In this case we want $f\omega_Y = dg/g$ for some $g \in k(Y)^\times$ and non-constant $f$. Starting with any $g\in k(Y)^\times$ we define 
\[ f =  \frac{dg}{g \omega_Y}.\]
We want to find the possibilities for $(\frac{dg}{g \omega_Y})_\infty$.

As before, for a point $P$ and a uniformizer $x$ at $P$, write
\[ g = a_nx^n + a_{n+1}x^{n+1} + a_{n+2}x^{n+2} + \dots \; (a_n \neq 0), \quad \omega = u(x) dx,\]
but now we consider every possible $n$. For $n=0$, $g$ is a unit at $P$ and $dg$ is regular at $P$, so $P$ does not appear in $(\frac{dg}{g \omega_Y})_\infty$. For $n\neq 0$,
\[  \frac{dg}{g \omega_Y} = u(x)^{-1}\cdot \frac{na_nx^{n-1} + (n+1)a_{n+1}x^{n} + (n+2)a_{n+2}x^{n+1} + \dots}{a_nx^n + a_{n+1}x^{n+1} + a_{n+2}x^{n+2} + \dots}.\]
If $p \vert n$, there is no pole at $P$, while if $p \nmid n$ there is a simple pole at $P$. We conclude that 
\[\left(\frac{dg}{g \omega_Y}\right)_\infty = \sum_{i=1}^r [P_i], \]
where the sum is over distinct points $P_i$. 

\ 

Conversely, let $\delta = \sum_{i=1}^r [P_i]$ be such a divisor. We claim that there is a function $g\in k(Y)^\times$ such that $\left(\frac{dg}{g \omega_Y}\right)_\infty = \delta$ if and only if there are integers $a_i, i=1, \dots, r$, such that $\forall i, (a_i, p) = 1$ and $a:=\sum_{i=1}^r a_i \equiv 0 \pmod{p}$. 

Indeed, if such a function $g$ exists, the computation above shows that for all $P_i$, $a_i :=\val_{P_i}(g)$ is not divisible by $p$, and for any other point $Q$, $\val_Q(g) \equiv 0 \pmod{p}$. Since the divisor of a function has degree~$0$, $a \equiv 0 \pmod{p}$. In the other direction, if such $a_i$ exist, by modifying $a_1$ to $a_1+p$ we may assume $a \neq 0$; choose a point $Q$ such that $[a]Q = \sum_{i=1}^r [a_i] P_i$ (we use $[r]$ to indicate the multiplication by $r$ map on $Y$). The divisor $\sum_{i=1}^r a_i[P_i]-a[Q]$ is then principal, equal to $(g)$ for some function $g$ and $\left(\frac{dg}{g \omega_Y}\right)_\infty = \sum_{i=1}^r [P_i]$. To remove any doubts, this argument works even if $Q$ is one of the $P_i$.

It therefore remains to analyze the condition 
\[ \exists a_1, \dots, a_r, \forall i \;(a_i, p) = 1, \quad \sum_{i=1}^r a_i \equiv 0 \pmod{p}.\]
Assume first that $r>1$ and $p>2$. Then such integers $a_i$ can always be found. If $r \not\equiv 1 \pmod{p}$ take $(a_1, \dots, a_r)=(1^{r-1}, p-(r-1))$ and if $r \equiv 1 \pmod{p}$ take $(a_1, \dots, a_r)=(1^{r-2}, 2, p-1)$.

If $r = 1$, there are no such integers $a_i$. 
Finally, if $r>1$ and $p=2$ then such integers $a_i$ exist if and only if $r$ is even.

\ 

We summarize our discussion in the following theorem. 

\begin{thm}\label{p closed thm 2} Assume that $E$ is ordinary and we are in Case I or Case $\text{II}_{\text{ord}}$. Let $\delta$ be an effective divisor on $Y$ of degree $d\geq 2$. If the divisor $\delta = \sum_{P\in Y} n_P[P] = (dg/g\omega_Y)_\infty$ for some function $g\in k(Y)$, then for every point~$P$, $n_P = 0$ or $1$.

Therefore, if $\delta$ is an effective divisor of the form $ \sum_{P\in Y} n_P[P] $ and, for some $P$, $n_P > 1$, then \emph{any} corresponding foliation defined by a point $(\delta, f) \in \WW^\circ_{Y, d}$ is  \emph{not $p$-closed}. 

If, on the other hand, $\delta$ is an effective divisor of the form $ \sum_{i=1}^r [P_i]$, then \emph{there is} a \emph{$p$-closed} foliation defined by a point $(\uy, f) \in \WW^\circ_{Y, d}$ such that $\uy =\delta$ if and only if there exist integers $a_1, \dots, a_r$, $(a_i, p) = 1$, $\sum_{i=1}^r a_i \equiv 0\pmod{p}$. Such integers exist if and only if $r>1$, and $r$ is even when $p=2$.
\end{thm}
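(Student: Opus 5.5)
The theorem collects the preceding discussion, and I will organize the proof around Proposition~\ref{cor: when are foliations p closed}. In Case I with $E$ ordinary, and in Case $\text{II}_{\text{ord}}$, the foliation defined by $(\uy,f)$ is $p$-closed if and only if $\scrC_Y(f\omega_Y)=f\omega_Y$. By Cartier's Proposition 7 this holds if and only if $f\omega_Y = dg/g$ for some $g\in k(Y)^\times$. So $p$-closed foliations over $\delta$ correspond exactly to functions $g$ with $(dg/(g\omega_Y))_\infty=\delta$. Note that $f$ must be non-constant, which is automatic once $\deg\delta\ge 2$. The theorem therefore reduces to describing which divisors occur as $(dg/(g\omega_Y))_\infty$.

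\textbf{Necessity.} I will work locally. Fix $P$, a uniformizer $x$ at $P$, and write $g=a_nx^n+\cdots$ with $a_n\ne 0$, and $\omega_Y=u\,dx$ with $u$ a unit. Then
\[ \frac{dg}{g\omega_Y}=u^{-1}\Big(\frac{n}{x}+\text{regular}\Big), \]
since $dg/g = d\log(x^n v) = n\,dx/x + dv/v$ with $v$ a unit. This has a simple pole at $P$ when $p\nmid n$ and is regular at $P$ otherwise. Hence every $n_P$ is $0$ or $1$, which gives the first assertion. The second paragraph of the theorem is then immediate: if some $n_P>1$, no $f$ with $(f)_\infty=\delta$ can satisfy the Cartier condition. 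Moreover, if $\delta=\sum_{i=1}^r[P_i]$ is realized by some $g$, then $a_i=\val_{P_i}(g)$ is prime to $p$. At every other point $Q$ the valuation $\val_Q(g)$ is divisible by $p$. Since $\deg(g)=0$, it follows that $\sum a_i\equiv 0\pmod p$.

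\textbf{Sufficiency.} Given integers $a_i$ prime to $p$ with $\sum a_i\equiv 0 \pmod p$, I first replace $a_1$ by $a_1+p$ if necessary, so that $a:=\sum a_i\neq 0$. Since $[a]$ is surjective on $Y$, I can pick $Q$ with $[a]Q=\sum [a_i]P_i$ in the group law. Abel's theorem then makes $\sum a_i[P_i]-a[Q]$ principal, say equal to $(g)$. The coefficient at $Q$ is $-a+a_j$ if $Q=P_j$ and $-a$ otherwise. In either case it is $\equiv a_j$ or $\equiv 0 \pmod p$, so the local computation gives no pole at $Q$ beyond the prescribed one. Therefore $(dg/(g\omega_Y))_\infty=\delta$, and $f=dg/(g\omega_Y)$ yields a point of $\WW^\circ_{Y,d}$ over $\delta$ defining a $p$-closed foliation.

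\textbf{The arithmetic condition.} It remains to decide when such $a_i$ exist. If $r=1$, the condition forces $p\mid a_1$, so no choice works. If $p=2$, all $a_i$ are odd and $\sum a_i$ is even exactly when $r$ is even. If $p>2$ and $r>1$, I will take $(1,\dots,1,p-(r-1))$ when $r\not\equiv 1\pmod p$, and $(1,\dots,1,2,p-1)$ when $r\equiv1\pmod p$. Every entry is prime to $p$ and the sums are $p$ and $r+p$ respectively, both divisible by $p$. The only step needing real care is the local analysis of the logarithmic derivative, in particular the case $Q=P_j$ in the sufficiency construction.
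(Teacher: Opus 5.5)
Your proof is correct and follows the paper's argument essentially step for step. The steps match: the reduction via Proposition~\ref{cor: when are foliations p closed} and Cartier's criterion to logarithmic differentials $dg/g$; the local computation showing $dg/(g\omega_Y)$ has at most a simple pole, occurring exactly when $p\nmid \val_P(g)$; the degree-zero constraint; the construction of $g$ from $\sum a_i[P_i]-a[Q]$ with $[a]Q=\sum[a_i]P_i$, including the case $Q=P_j$; and the same explicit tuples. One trivial slip: the tuple $(1^{r-2},2,p-1)$ sums to $r+p-1$, not $r+p$, and it is $r+p-1$ that is divisible by $p$ when $r\equiv 1\pmod p$.
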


\

\subsubsection{Case $\text{II}_{\text{ssg}}$} In this case our results are not as complete as in previous cases. To fix ideas, suppose that $p>2$ and the supersingular elliptic curve $Y$ is given by a Weierstrass equation
\[ Y\colon \quad y^2 = g(x),\]
where $g(x)$ is a monic separable cubic polynomial. We may take $\omega_Y = \frac{dx}{y}$. Let $Q_1, Q_2, Q_3$ be the nonzero points on $Y[2]$ and define the divisor \[\delta_2 = [Q_1]+[Q_2]+[Q_3].\]
We write
\[ \omega_Y = y^{-p}g(x)^{(p-1)/2} dx.\]
Then $\scrC_Y(\omega_Y) = y^{-1}\scrC_Y(g(x)^{(p-1)/2} dx)$. Now, as $g(x)^{(p-1)/2} = x^{3(p-1)/2} + l.o.t.$, the only obstruction to integrating $g^{(p-1)/2}$ (namely, finding a function $G\in k[x]$ such that $dG = g^{(p-1)/2}dx$) is that the coefficient of $x^{p-1}$ is nonzero; at the same time, referring to Example~\ref{exa: cartier on curve}, the vanishing of this coefficient is equivalent to $\scrC_Y(\omega_Y) = 0$. We conclude that there is a function $G \in k[x]$ such that $dG = g^{(p-1)/2}dx$ and $G = -2 x^{1+ 3(p-1)/2} + l.o.t.$. By replacing $G$ by $G+c$ for a suitable scalar $c$, we may assume that $G$ does not vanish along $\delta_2$.

Let $h = y^{-p} G$. As $(y^{-p}) = 3p [0_Y] - p\delta_2$ and $(G)_\infty = (3p-1)[0_Y]$, we find that
\[ dh = \omega_Y, \qquad (h)_\infty =  p\delta_2.\]

\begin{thm}\label{thm: p closed tricky} Let $p>2$.  Let $\delta\geq 0$ be any effective divisor on $Y$ such that ${\rm supp}(\delta) \cap \delta_2 = \emptyset$. There is then a function $f\in k(Y)$ such that $\scrC_Y(f\omega_Y) +\omega_Y=0$ and \[(f)_\infty = p((p-1)\delta_2 + \delta).\] Therefore, there are $p$-closed smooth turbulent foliations $\scrF$ on $A$ such that 
\[ \scrF \cong \calO_A(-D), \quad D = p\tau^\ast((p-1)\delta_2 + \delta).  \]
\end{thm}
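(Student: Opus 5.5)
The plan is to use the explicit description of the solution set in Proposition~\ref{cor: when are foliations p closed}(4). With the function $h = y^{-p}G$ constructed just before the statement, we have $dh = \omega_Y$ and $(h)_\infty = p\delta_2$. I will look for $f$ of the simplest admissible shape
\[ f = -h^{p-1} + g^p, \qquad g \in k(Y), \]
which is the case $g_0 = g$ and $g_1 = \dots = g_{p-2} = 0$. The Cartier condition can be checked directly from the listed properties of $\scrC_Y$:
\[ \scrC_Y(f\omega_Y) = -\scrC_Y(h^{p-1}dh) + g\,\scrC_Y(dh) = -dh + 0 = -\omega_Y. \]
So $\scrC_Y(f\omega_Y) + \omega_Y = 0$ holds for every choice of $g$. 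The whole task is therefore to choose $g$ so that the polar divisor of $f$ is exactly $p((p-1)\delta_2 + \delta)$.

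Since $(h)_\infty = p\delta_2$, the term $h^{p-1}$ has polar divisor exactly $p(p-1)\delta_2$, and $g^p$ has polar divisor $p\,(g)_\infty$. I want $(g)_\infty = \delta + \epsilon$, where $\epsilon$ is supported on $\delta_2$ with multiplicities $< p-1$. On ${\rm supp}(\delta)$, which is disjoint from $\delta_2$, the only poles of $f$ come from $g^p$, so they are exactly $p\delta$. At each $Q_i$ the pole of $g^p$ has order at most $p(p-2) < p(p-1)$, so it cannot cancel the leading pole of $h^{p-1}$, which has order $p(p-1)$. Away from these points $f$ is regular. Hence
\[ (f)_\infty = p(p-1)\delta_2 + p\delta, \]
as required.

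The choice of $g$ splits into cases by degree. If $\delta = 0$, take $g = 0$. If $\deg\delta \ge 2$, Riemann--Roch gives $g$ with $(g)_\infty = \delta$ exactly, by the same argument used in the proof of Theorem~\ref{p closed thm 1}. The only genuine obstacle is $\delta = [P]$, since no function on $Y$ has a single simple pole. There I take $g$ with $(g)_\infty = [P] + [Q_1]$, which Riemann--Roch again provides. Its pole at $Q_1$ has order $1 \le p-2$, so it is absorbed, and this is exactly where the hypothesis $p>2$ is used. In every case $f$ is non-constant, because it has poles along $\delta_2$.

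Finally, Proposition~\ref{cor: when are foliations p closed}(4) shows that the foliation defined by $\eta = \tau^\ast(f\omega_Y) + \tilde\omega_E$ is $p$-closed. Theorem~\ref{thm classtrbfol} then gives $\scrF \cong \calO_A(-D)$ with $D = \tau^\ast((f)_\infty) = p\,\tau^\ast((p-1)\delta_2 + \delta)$.
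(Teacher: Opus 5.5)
Your proposal is correct and is essentially the paper's proof. It uses the same ansatz $f=-h^{p-1}+g_0^p$ and the same three choices of $g_0$: $g_0=0$, $(g_0)_\infty=\delta$, and $(g_0)_\infty=[P]+[Q_1]$ when $\delta=[P]$. It also makes explicit two points the paper leaves implicit: the direct check $\scrC_Y(f\omega_Y)=-\omega_Y$, and the pole-order comparison at the $Q_i$, where $p>2$ ensures the pole of $g_0^p$ at $Q_1$ cannot cancel the order-$p(p-1)$ pole of $h^{p-1}$.
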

\begin{proof} First, note that the function $h$ is a $p$-basis. Referring to \S~\ref{subsec:Cartier}, in particular Example~\ref{exa: cartier on curve}, we find that the most general form of a function $f\in k(Y)$ such that $\scrC_Y(f\omega_Y) = -\omega_Y$ is a function with a $p$-basis expansion
\[ f = -h^{p-1} + g_{p-2}^ph^{p-2} + \cdots + g_0^p,  \]
where $g_i \in k(Y)$. Let $\delta$ be an effective divisor of degree $d\geq 2$ with support disjoint from $\delta_2$. By Riemann-Roch, there is a function $g_0\in k(Y)$ such that $(g_0)_\infty = \delta$. Take then $f = -h^{p-1} + g_0^p$. If~$\delta$ is a single point, there is a function $g_0$ such that $(g_0)_\infty = Q_1 + \delta$. Take again $f =  -h^{p-1} + g_0^p$. If $\delta = 0$, we simply take $f = -h^{p-1}$.
\end{proof}

\medskip
\begin{rmk} The proof shows that further variations are possible, even restricting to combinations of the form $-h^{p-1} + g_0^p$, and thus the result of Theorem~\ref{thm: p closed tricky} is not the last word on the subject. However, we were not able to characterize all the divisors that may arise as $(f)_\infty$, for $f = -h^{p-1} + g_{p-2}^ph^{p-2} + \cdots + g_0^p$.
\end{rmk}

\

\subsection{Linear $p$-closed foliations}\label{subsec:linear p closed} For completeness, we discuss $p$-closed linear foliations on abelian varieties $A$ of arbitrary dimension $d$ over an algebraically closed field $k$ of characteristic $p$. A rank $r$ linear foliation on $A$ is a translation-invariant rank $r$ saturated subsheaf of $\calT_A$. These are in bijection with $k$ vector subspaces of the tangent space to $A$ at the origin $D = \calT_{A, 0}$. In fact, in characteristic $p$, we may view $D$ as the covariant Dieudonn\'e module of the subgroup scheme $G:=\Ker(\Fr_A\colon A \arr A^{(p)})$. As such, $p$-closed foliations $\scrF$ (of rank $r$) correspond to Dieudonn\'e submodules of $D$, which, in turn, correspond to finite group schemes $H$ of $A$ (of rank $p^r$) contained in $G$. Moreover, the quotient of $A$ by $\scrF$ in the sense of foliations is precisely the quotient $A/H$ in the sense of abelian varieties.  

It is thus quite a simple matter to classify those. Supposing, for example, that $A$ has a polarization of degree prime to $p$, one can use the work of Ekedahl and Oort \cite{EO} to enumerate all the possibilities. 

For illustration, we consider the case of an abelian surface $A$, regardless of polarization. In this case $G$ has rank $p^2$. Using the classification of commutative group schemes over $k$, we have the following possibilities:
\begin{enumerate}
\item $G \cong \alpha_p \oplus \alpha_p$. This happens if and only if $A$ is a superspecial surface. In this case, the linear foliations are in bijection with lines in $D$, namely with $\PP^1(k)$. Each linear foliation corresponds to a copy of the group scheme $\alpha_p$ inside $A$. In this case $A/H$ is a supersingular abelian surface that could be either superspecial, or not. 
\item $G$ is a non-split extension of $\alpha_p$ by $\alpha_p$. This happens if and only if $A$ is a supersingular, but not superspecial, abelian surface. Thus, there is a unique $p$-closed foliation, corresponding to the unique copy of $\alpha_p$ inside $A$. In this case $A/H$ is a superspecial abelian surface.
\item $G \cong \mu_p \oplus \alpha_p$. This case can arise in two different ways: (i) $A$ is isogenous to a product of a supersingular elliptic curve by an ordinary elliptic curve, or (ii) $A$ is a simple abelian surface. In fact, the second case is the generic case. If one considers such $A$ that are principally polarized, there is a $2$-dimensional irreducible surface in the moduli space that parametrizes such simple surfaces, while the moduli of case (i) is a countable union of curves. In either case, there are precisely two $p$-closed linear foliations; they correspond to $\mu_p$ and $\alpha_p$ inside $A$.
\item $G \cong \mu_p \oplus \mu_p$. In this case $A$ is ordinary and the rank-$1$ $p$-closed linear foliations correspond to subgroups of order $p$ of $\mu_p^2$. There are $p+1$ such subgroups.  
\end{enumerate}

\

\

\section{Families of foliations}
Recall our conventions: $k$ is an algebraically closed field, $Y$ is an elliptic curve over $k$ and $d\geq 2$ is an integer. We recall that given a scheme $S$ over $k$, an effective Cartier divisor $\delta\subset Y\times_k S$ of degree $d$, relative to $S$, is a closed subscheme $\delta$ of $Y\times_k S$,  finite and flat of degree $d$ over $S$ and such that the ideal sheaf $\calO_{Y\times_k S}(-\delta)\subset \calO_{Y\times_k S} $ defining $\delta$ is an invertible $\calO_{Y\times_k S}$-module (cf. \cite[Definition 3.4]{Milne}). 

Given a $k$-scheme $S$ we denote by $\mathrm{Div}^d_Y(S)$ the set of effective Cartier divisors $\delta_S\subset Y\times_k S$, relative to $S$ of degree~$d$. Given a morphism of $k$-schemes $f\colon T \to S$, one has a natural notion of pullback of relative Cartier divisors $f^\ast\colon\mathrm{Div}^d_Y(S) \to \mathrm{Div}^d_Y(T) $. We get in this way a contravariant functor $\mathrm{Div}^d_Y$ from $k$-schemes to sets. 

We define $\Sym^d Y$ to be the quotient $Y^d/\Sigma_d$ of the $d$-fold product of $Y$ over $k$, modulo the action of the group of permutations of~$d$ elements. We then have the following

\begin{prop}\label{prop: Symd} The quotient $\Sym^d Y$ is a projective and smooth variety over $k$, of dimension~$d$. Moreover, it represents the functor $\mathrm{Div}^d_Y$.
\end{prop}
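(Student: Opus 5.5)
The plan is to reduce everything to standard facts about quotients of projective varieties by finite groups, together with the classical theory of divisors on smooth curves. The quotient $Y^d/\Sigma_d$ exists as a projective variety because $Y^d$ is projective: every $\Sigma_d$-orbit lies in an affine open, since $Y^d$ carries a $\Sigma_d$-invariant ample line bundle such as $\boxtimes_i \calO_Y([0_Y])$. On an invariant affine $\Spec R$, the quotient is locally $\Spec R^{\Sigma_d}$; it is projective, irreducible and of dimension $d$, since $Y^d \arr \Sym^d Y$ is finite and surjective.

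For smoothness I will work locally at a closed point. Consider a point of $\Sym^d Y$ given by $\sum m_i [P_i]$. The completed local ring is the tensor product over the distinct $P_i$ of the invariants of $k[[t_1,\dots,t_{m_i}]]$ under $\Sigma_{m_i}$, using a uniformizer at $P_i$; this uses that the stabilizer of a point of $Y^d$ over it is $\prod \Sigma_{m_i}$ and that completion commutes with taking invariants. By the fundamental theorem on symmetric polynomials, which holds over $\ZZ$ and hence in every characteristic, the invariant ring $k[[t_1,\dots,t_m]]^{\Sigma_m}$ is a power series ring in the elementary symmetric functions. So every completed local ring is regular of dimension $d$, and $\Sym^d Y$ is smooth.

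For representability there are two steps.

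\textbf{The natural transformation.} First I will construct $\Sym^d Y \arr$ (representing object of) $\mathrm{Div}^d_Y$ by building a universal relative divisor on $Y \times \Sym^d Y$. On $Y\times Y^d$ take the sum $\sum_j \Delta_j$ of the graphs of the projections. Each graph is an effective Cartier divisor, finite flat of degree $1$ over $Y^d$, so the sum is a relative effective Cartier divisor of degree $d$. It is $\Sigma_d$-invariant, and I will show it descends to $Y\times\Sym^d Y$.

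The cleanest way to do this is to argue locally. Over an affine open $U=\Spec B \subset Y$ with a coordinate, the divisor over $Y^d$ is cut out by $\prod_j (x - x_j)$, whose coefficients are symmetric and hence defined on the quotient. This requires choosing opens where all points lie in a common affine, which is possible because any $d$ points of $Y$ lie in an affine open. Alternatively one can cite \cite[Theorem 3.13]{Milne}, where exactly this statement (for a smooth projective curve over a field) is proved, namely that $\Sym^d Y$ represents $\mathrm{Div}^d_Y$. In the paper I would likely invoke that reference and only sketch the argument.

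\textbf{Bijectivity on $S$-points.} Given $\delta_S \subset Y\times S$ finite flat of degree $d$, I need a unique morphism $S\arr\Sym^dY$ pulling the universal divisor back to $\delta_S$. Locally on $S$, $\delta_S$ sits inside $U\times S$ for an affine open $U$. There $\calO_{\delta_S}$ is locally free of rank $d$ over $\calO_S$, and multiplication by a coordinate function has a characteristic polynomial whose coefficients define a map to the local chart $\Sym^d U = \Spec(B^{\otimes d})^{\Sigma_d}$. Checking that this glues and is inverse to pullback is the main obstacle, since one cannot use a single global coordinate on $Y$. The standard resolution is Milne's approach via norms of the ideal, $\mathrm{Nm}_{\delta_S/S}$, and I would follow it rather than redo it.
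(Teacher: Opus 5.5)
Your proposal is correct and follows the paper's route: the paper gives no argument of its own and simply cites \cite[Proposition 3.2 \& Theorem 3.13]{Milne}, and your sketch (symmetric power series in the completed local rings for smoothness, descent of $\sum_j\Delta_j$ and then deferring to Milne for representability) is what those references contain. One small imprecision: on an affine open $U$ of an elliptic curve no single function $x$ cuts out the diagonal of $U\times U$ (that would force $x\colon U\arr\AA^1$ to be an open immersion), so $\prod_j(x-x_j)$ is only a local equation for $\sum_j\Delta_j$ near a given $\Sigma_d$-orbit, with $x$ chosen \'etale at the relevant point and separating it from the other points of the divisor, which is all the descent requires.
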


We refer to \cite[Proposition 3.2 \& Theorem 3.13]{Milne} for a proof.

\subsection{$\WW^\circ_{Y, d}$ as a moduli space}\label{sec: WW}
Let $\pi\colon Y\times_k \Sym^d Y  \to \Sym^d Y $ be the projection map. Let $\delta\subset Y\times_k \Sym^d Y $ be the corresponding  universal relative Cartier divisor. Let $\calW:=\pi_\ast\bigl( \calO_{Y\times_k \Sym^d Y }(\delta)\bigr)$. Since $\pi$ is projective, $\calW$ is a coherent $\calO_{\Sym^d Y }$-module.

\begin{lem} The $\calO_{\Sym^d Y }$-module $\calW$ is locally free of rank $d$, and for any point~$t$ of $\Sym^d Y$, the natural map $\calW\otimes_{\calO_{\Sym^d Y}} k(t) \arr H^0(Y, \calO_Y(\delta_t))$ is an isomorphism. 
\end{lem}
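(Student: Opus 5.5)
The plan is to apply cohomology and base change to the invertible sheaf $\calL := \calO_{Y\times_k \Sym^d Y}(\delta)$ on $Y\times_k \Sym^d Y$, viewed over $\Sym^d Y$ via $\pi$. Two inputs make this work. First, $\pi$ is projective and flat (it is a base change of $Y \to \Spec k$). Second, $\calL$ is invertible, hence flat over $\Sym^d Y$. So the theory of cohomology and base change (Grauert's theorem, or Mumford's Abelian Varieties \S5, or Hartshorne III.12.9) applies to $\calL$.

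First I would identify the fibres. For any point $t$ of $\Sym^d Y$, with residue field $k(t)$, the restriction of $\calL$ to $Y_{k(t)}$ is $\calO_{Y_{k(t)}}(\delta_t)$. Here $\delta_t$ is an effective Cartier divisor of degree $d$ on $Y_{k(t)}$, because pullback of relative effective Cartier divisors is compatible with restricting the associated invertible sheaves. On the genus $1$ curve $Y_{k(t)}$, a line bundle of degree $d\geq 1$ has $H^1 = 0$: by Serre duality $H^1(\calO(\delta_t)) \cong H^0(\calO(-\delta_t))^\vee$, since $\Omega^1_Y\cong\calO_Y$, and the right side vanishes because the degree is negative. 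Riemann--Roch then gives $h^0 = d$. This computation holds over any field, so it covers non-closed points as well.

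Next I would deduce the conclusions. The function $t\mapsto h^1(Y_t,\calL_t)=0$ is constant. By cohomology and base change, $R^1\pi_\ast\calL = 0$ and the base change map $\varphi^1(t)$ is an isomorphism at every $t$. Then $\varphi^0(t)$ is surjective, hence an isomorphism, because $R^1\pi_\ast\calL$ is locally free (it is zero). Finally, $\pi_\ast\calL=\calW$ is locally free, of rank equal to the constant $h^0 = d$. Alternatively, since $\Sym^d Y$ is reduced (it is smooth by Proposition~\ref{prop: Symd}), Grauert's theorem gives the same conclusion directly from the constancy of $t\mapsto h^0(\calL_t)=d$. The map $\calW\otimes k(t)\to H^0(Y_{k(t)},\calO(\delta_t))$ is exactly $\varphi^0(t)$, so this is the asserted isomorphism.

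I don't expect a real obstacle here. The only point needing care is the fibre computation at non-closed points, where one should use that $\delta_t$ is a degree-$d$ divisor on $Y_{k(t)}$. Since the statement is for points $t$ with residue field $k(t)$, the vanishing of $H^1$ has to be checked over that field, and it does hold there.
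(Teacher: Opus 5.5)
Your proposal is correct and is essentially the paper's argument. The paper computes $h^0=d$ fibrewise by Riemann--Roch and then invokes \S II.5, Corollary 2 of Mumford's \emph{Abelian Varieties} (Grauert's theorem over the reduced, connected base $\Sym^d Y$), which is exactly your alternative route. Your primary route, via the vanishing $H^1=0$ on every fibre and cohomology and base change, is a harmless variant; its small advantage is that it does not need reducedness of the base.
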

\begin{proof} Since $Y$ is an elliptic curve, it follows from the Riemann-Roch theorem that for every point~$t$ of $\Sym^d Y $ we have $h^0(Y,\calO_Y(\delta_t))=d$. Since $\Sym^dY$ is reduced and connected, and $\calO_Y(\delta_t) \cong 
(\calO_{Y\times_k \Sym^d Y }(\delta))_t$, the claim follows, for example, from \cite[\S II.5, Corollary 2]{MumfordAbelianVarieties}.
\end{proof}

We denote by $\WW_{Y, d}$ the vector bundle over  $\Sym^d Y $ associated to the $\calO_{\Sym^d Y }$-module $\calW$. Given a $k$-algebra $R$, an $R$-valued point of $\WW_{Y, d}$ corresponds to a pair $(\delta_R,f_R)$ where $\delta_R$ is an effective Cartier divisor $\delta_R\subset Y_R:=Y\times_k \Spec(R)$ of degree $d$, relative to $\Spec(R)$, and  $f_R$ is a section $f_R\in H^0\bigl(Y_R, \calO_{Y_R}(\delta_R)\bigr)$. In particular, over $\WW_{Y, d}$ we have a universal Cartier divisor $\delta_{\WW_{Y, d}} \subset Y\times_k \WW_{Y, d}$ and a universal section $f_{\WW_{Y, d}}\in H^0\bigl(Y\times_k \WW_{Y, d}, \calO_{Y\times_k \WW_{Y, d}}(\delta_{\WW_{Y, d}})\bigr)$.  We claim:

\begin{lem}\label{lem quotientofO2} There is a non-empty open subvariety $\WW_{Y, d}^\circ\subset \WW_{Y, d}$ whose $R$-valued points, for a $k$-algebra $R$, are characterized by the fact that the sections $1$ and the base change $f_R$ of $f_{\WW_{Y, d}}$ together span $\calO_{Y_R}(\delta_R)$ as an $\calO_{Y_R}$-module.
\end{lem}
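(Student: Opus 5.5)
Consider the universal line bundle $\calL := \calO_{Y\times_k \WW_{Y,d}}(\delta_{\WW_{Y,d}})$ on $Y\times_k \WW_{Y,d}$ and the morphism of $\calO$-modules
\[ \phi\colon \calO_{Y\times_k \WW_{Y,d}}^{2} \Arr \calL, \qquad (a,b)\mapsto a\cdot 1 + b\cdot f_{\WW_{Y,d}}, \]
where $1$ is the canonical section of $\calL$ vanishing exactly on $\delta_{\WW_{Y,d}}$. The plan is to let $Z\subset Y\times_k \WW_{Y,d}$ be the support of ${\rm coker}(\phi)$, a closed subset since the cokernel is coherent. Then set $\WW^\circ_{Y,d} := \WW_{Y,d}\setminus {\rm pr}(Z)$, where ${\rm pr}\colon Y\times_k \WW_{Y,d}\arr \WW_{Y,d}$ is the projection. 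Because $Y$ is proper, ${\rm pr}$ is a closed map, so ${\rm pr}(Z)$ is closed and $\WW^\circ_{Y,d}$ is open.

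Next I will verify the moduli description. For an $R$-point $x\colon \Spec R\arr \WW_{Y,d}$, the pullback of $\phi$ to $Y_R$ is the map $(a,b)\mapsto a+bf_R$ into $\calO_{Y_R}(\delta_R)$. Formation of cokernels commutes with base change (right exactness of $\otimes$), so ${\rm coker}(\phi_R) = (1\times x)^\ast{\rm coker}(\phi)$. By Nakayama, its support is $(1\times x)^{-1}(Z)$. Hence $1, f_R$ span $\calO_{Y_R}(\delta_R)$ if and only if $(1\times x)^{-1}(Z)=\emptyset$. Since $Y_R\arr \Spec R$ is surjective, this holds if and only if $x$ misses ${\rm pr}(Z)$, i.e. $x$ factors through the open subscheme $\WW^\circ_{Y,d}$. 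This gives the functorial characterization, and it shows that $\WW^\circ_{Y,d}$ represents the stated subfunctor.

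It remains to check non-emptiness and to match the $k$-points with the earlier set-theoretic definition. At a $k$-point $(\uy,f)$, the sections $1$ and $f$ generate $\calO_Y(\uy)$ at a point $P$ if and only if one of them is a local generator at $P$. Away from ${\rm supp}(\uy)$, $1$ already generates. At $P\in{\rm supp}(\uy)$, a generator is a section with pole of order exactly $n_P$; $1$ has no pole, so generation there holds precisely when $\ord_P(f) = -n_P$. Thus the $k$-points are exactly the pairs with $(f)_\infty=\uy$, which agrees with the earlier definition of $\WW^\circ_{Y,d}$.

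For non-emptiness, take $\uy = d[0_Y]$ with $d\geq 2$. Riemann--Roch gives a function with polar divisor exactly $d[0_Y]$, for example $x$ or $y$ in Weierstrass form, or suitable products of them. I expect no real obstacle here. The only delicate point is the base-change compatibility of the support of the cokernel and the use of properness to get openness, and both are standard.
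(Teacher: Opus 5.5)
Your proof is correct and is essentially the paper's argument. The cokernel of $(a,b)\mapsto a+bf$ on $Y\times_k\WW_{Y,d}$ is exactly the quotient $\calN_\delta/\calF_{\WW_{Y,d}}$ that the paper considers, and where the paper gets openness from the finiteness of $\delta_{\WW_{Y,d}}$ over $\WW_{Y,d}$, you get it from the properness of $Y$. You also spell out the base-change compatibility of the support and the identification of $k$-points, which the paper leaves implicit; strictly, that base-change step uses that $Y_R$ is the fiber product $(Y\times_k\WW_{Y,d})\times_{\WW_{Y,d}}\Spec R$, not merely that $Y_R\arr\Spec R$ is surjective.
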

\begin{proof} The section $f_{\WW_{Y, d}}$ defines a morphism $F_{\WW_{Y, d}}\colon \calO_{Y\times_k \WW_{Y, d}} \arr \calN_\delta$, of sheaves on $Y\times_k \WW_{Y, d}$, as the composition
\[\xymatrix@C=1.5cm{\calO_{Y\times_k \WW_{Y, d}} \ar[r]^-{\times f_{\WW_{Y, d}}} & 
\calO_{Y\times_k \WW_{Y, d}}(\delta_{\WW_{Y, d}}) \ar[r] & \calO_{Y\times_k \WW_{Y, d}}(\delta_{\WW_{Y, d}})/\calO_{Y\times_k \WW_{Y, d}}\cdot 1\cong \calN_\delta } \]
The quotient of $\calO_{Y\times_k \WW_{Y, d}}(\delta_{\WW_{Y, d}})$ by the $\calO_{Y\times_k \WW_{Y, d}}$-module spanned by the constant function $1$ is isomorphic to the normal sheaf $\calN_\delta$ for the inclusion $\delta_{\WW_{Y, d}}\subset Y\times_k \WW_{Y, d}$, i.e.,  the invertible $\calO_{\delta_{\WW_{Y, d}}}$-module  given by the pullback of $\calO_{Y\times_k \WW_{Y, d}}(\delta_{\WW_{Y, d}})$ to $\delta_{\WW_{Y, d}}$. The image of $F_{\WW_{Y, d}}$ defines an $\calO_{\delta_{\WW_{Y, d}}}$-submodule $\calF_{\WW_{Y, d}}$ of $\calN_\delta$. Since $\delta_{\WW_{Y, d}}$ is finite and locally free over $\WW_{Y, d}$, the locus in $\WW_{Y, d}$ where the inclusion $\calF_{\WW_{Y, d}}\subset \calN_\delta$ is an isomorphism is open in $\WW_{Y, d}$. This coincides with $\WW_{Y, d}^\circ$. 

It remains to prove that $\WW_{Y, d}^\circ(k)$ is non-empty. This follows as, given a divisor $\delta\subset Y$ of degree $d$, the Riemann-Roch theorem implies that there exists an $f \in H^0\bigl(Y,\calO_Y(\delta)\bigr)$ with polar divisor exactly $\delta$, and so $1$ and $f$ span $\calO_Y(\delta)$ as an $\calO_Y$-module.
\end{proof}

In particular, the sections  $1$ and $f_{\WW_{Y, d}^\circ}:=f_{\WW_{Y, d}}\vert_{\WW_{Y, d}^\circ}$ of $\calO_{Y\times_k \WW_{Y, d}}(\delta_{\WW_{Y, d}})$ over $\WW_{Y, d}^\circ$ span the sheaf $\calO_{Y\times_k \WW_{Y, d}^\circ}(\delta_{\WW_{Y, d}})$ so we get a morphism  $$\Psi_{\WW_{Y, d}^\circ}\colon Y\times_k \WW_{Y, d}^\circ \longrightarrow \PP^1_k\times_k \WW_{Y, d}^\circ,$$over $\WW_{Y, d}^\circ$ such that the pullback of $\calO_{\PP^1_k\times_k \WW_{Y, d}^\circ}(1)$ is $\calO_{Y\times_k \WW_{Y, d}^\circ}(\delta_{\WW^\circ_{Y, d}})$ and the pullbacks of the two standard sections of $\calO_{\PP^1_k}(1)$ are the sections $1$ and $f_{\WW^\circ_{Y, d}}$ respectively. 

\begin{prop}\label{prop PsiR} The morphism $\Psi_{\WW_{Y, d}^\circ}$ is finite and flat of degree $d$ and the pullback of the relative Cartier divisor~$\infty$ of $\PP^1_k\times_k \WW_{Y, d}^\circ$ is $\delta_{\WW_{Y, d}^\circ}$.  

Vice versa, for every $k$-algebra $R$, the $R$-valued points of $ \WW_{Y, d}^\circ$ are in one-to-one correspondence with finite and flat morphisms $\Psi_R\colon Y_R \to \PP^1_R$, of degree $d$.

\end{prop}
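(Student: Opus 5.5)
The plan is to prove the two directions separately, reducing everything to fibrewise statements over points of $\Spec R$ together with the standard criteria for finiteness and flatness.

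\emph{Forward direction: the universal $\Psi$ is finite, flat of degree $d$.} Write $W=\WW^\circ_{Y,d}$. Let $[s_0:s_1]$ be the coordinates of $\PP^1_W$. Since $\Psi^\ast s_0=1$ and $\Psi^\ast s_1=f$, the scheme-theoretic preimage of $\infty=\{s_0=0\}$ is cut out by the image of $s_0$ in $\calO(\delta)$, namely the canonical section $1$. Its zero scheme is exactly $\delta_W$, which gives the pullback statement.

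For finiteness, $\Psi$ is a morphism of proper $W$-schemes, hence proper. It is also quasi-finite: on a geometric fibre over $w\in W$, the map $Y\to\PP^1$ is given by $(1,f_w)$ with $\calO(\delta_w)$ globally generated. It is therefore nonconstant, because $\deg\delta_w=d\ge 2>0$. Being proper and quasi-finite, $\Psi$ is finite.

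For flatness, both $Y\times W$ and $\PP^1_W$ are flat over $W$, and $\Psi$ is flat on each fibre, since a nonconstant map of smooth curves is flat. The fibrewise flatness criterion then shows that $\Psi$ is flat. The degree is $d$ because it can be read off from the fibre over $\infty$, which is $\delta_w$ of degree $d$.

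\emph{Converse: a finite flat $\Psi_R$ of degree $d$ gives an $R$-point of $W$.} Given such a $\Psi_R\colon Y_R\to\PP^1_R$, set $\delta_R:=\Psi_R^\ast(\infty)$. Since $\infty$ is a relative effective Cartier divisor on $\PP^1_R$ and $\Psi_R$ is flat, $\delta_R$ is a relative effective Cartier divisor. Because $\Psi_R$ is finite flat of degree $d$, the divisor $\delta_R$ is finite flat of degree $d$ over $R$.

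We have $\Psi_R^\ast\calO(1)\cong\calO_{Y_R}(\delta_R)$, with $\Psi_R^\ast s_0$ corresponding to the canonical section $1$. Put $f_R:=\Psi_R^\ast s_1$. Because $s_0,s_1$ generate $\calO(1)$, the sections $1$ and $f_R$ generate $\calO(\delta_R)$, so $(\delta_R,f_R)\in W(R)$ by Lemma~\ref{lem quotientofO2}.

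One subtlety needs care. The isomorphism $\Psi_R^\ast\calO(1)\cong\calO(\delta_R)$ has to be normalised so that $s_0\mapsto1$. Such a normalisation is unique, since an automorphism of a line bundle fixing a section whose zero scheme is a relative Cartier divisor is the identity. This makes $f_R$ well defined.

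\emph{The two constructions are mutually inverse.} Since $\PP^1$ represents pairs consisting of a line bundle and two generating sections (up to isomorphism), the two constructions invert each other.

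\emph{Main obstacle.} I expect the hardest step to be the flatness in the forward direction over a possibly nonreduced base $W$. This relies on the fibrewise criterion for flatness, which applies here because $Y\times W$ is flat and finitely presented over $W$. The same criterion yields local freeness of $\Psi_\ast\calO$. Its rank is $d$, by computing it on any fibre, and this rank is constant on connected components.
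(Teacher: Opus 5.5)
Your proposal is correct and follows essentially the same route as the paper: proper plus quasi-finite gives finiteness, the fibrewise flatness criterion gives flatness, and for the converse you pull back $\infty$ and the two standard sections of $\calO_{\PP^1}(1)$, then use that $1$ and $f_R$ span $\calO_{Y_R}(\delta_R)$. Your additional points make explicit what the paper leaves tacit: normalising $\Psi_R^\ast\calO(1)\cong\calO_{Y_R}(\delta_R)$ so that $s_0\mapsto 1$, reading off the degree from the fibre over $\infty$, and checking that the two constructions are mutually inverse (this last step implicitly uses that the $\Psi_R$ attached to an $R$-point is the base change of the universal one).
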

\begin{proof} For the first part of the proposition, it remains to show that $\Psi_{\WW_{Y, d}^\circ}$ is finite and flat of degree $d$. 
The morphism $\Psi_{\WW_{Y, d}^\circ}$ is quasi-finite. It is also proper as it is a morphism of schemes projective over $\WW_{Y, d}^\circ$. It follows that it is finite. It is flat pointwise on $\WW_{Y, d}^\circ$ and, hence, it is flat by the fiberwise criterion of flatness \cite[Lemma 37.16.3]{StackProject}. The pullback via an $R$-valued point of $\WW_{Y, d}^\circ$ then defines a finite and flat morphism $\Psi_R\colon Y_R \to \PP^1_R$, of degree $d$. 

Vice versa, given a finite and flat morphism $\Psi_R\colon Y_R \to \PP^1_R$ of degree $d$, the pullback of the Cartier divisor $\infty$ defines a relative Cartier divisor $\delta_R\subset Y_R$ of degree $d$ over $\Spec(R)$. We also get two sections $1$ and $f_R$ corresponding to the two standard sections of $\calO_{\PP^1_k}(1)$. Hence, $(\delta_R,f_R)$ defines an $R$-valued point of $\WW_{Y, d}$. Since $1$ and $f_R$ span $\calO_{Y_R}(\delta_R)$, by definition this $R$-valued point factors through $\WW_{Y, d}^\circ$.    
\end{proof}

\subsection{The Quot scheme ${\rm Quot}^P$ and the morphism $\WW_{Y, d}^\circ \arr {\rm Quot}^P$}\label{subsec: morphism to quot}

Let $A$ be an abelian surface. Our notation is as in \S \ref{sec Hilbertpol}. Consider the polynomial $P(t)  = n^6t^2 + n^4 d t$  introduced in (\ref{eqn:P}). Denote by ${\rm Quot}^P$ the scheme classifying quotient $\calO_A$-modules $\calO_A^2\to \calN$ with Hilbert polynomial  equal to $P(t)$; see \cite[Theorem 2.2.4]{HL}.

It follows from Lemma \ref{lem quotientofO2} that over $Y\times_k \WW_{Y, d}^\circ$ we have a surjective map $\calO_{Y\times_k \WW_{Y, d}^\circ}^2\to \calO_{Y\times_k \WW_{Y, d}^\circ}(\delta_{\WW_{Y, d}^\circ})$ given by the two sections $1$ and $f_{\WW_{Y, d}^\circ}$.  Let $D_{\WW_{Y, d}^\circ}\subset A\times_k \WW_{Y, d}^\circ$ be the pullback of the relative Cartier divisor $\delta_{\WW_{Y, d}^\circ}$ via $\tau$. Pulling back via $\tau$ we get over $A\times_k \WW_{Y, d}^\circ$ a surjective morphism  $\upsilon\colon \calO_{A\times_k \WW_{Y, d}^\circ}^2\to \calO_{A\times_k \WW_{Y, d}^\circ}(D_{\WW_{Y, d}^\circ})$ of $\calO_{A\times_k \WW_{Y, d}^\circ}$-modules. It follows from \S \ref{sec Hilbertpol} that $\calO_{A\times_k \WW_{Y, d}^\circ}(D_{\WW_{Y, d}^\circ})$ has Hilbert polynomial $P(t)$ and so, by the universal property of ${\rm Quot}^P$, that $\upsilon$ defines, and is defined by, a morphism of schemes over $k$: $$\rho\colon \WW_{Y, d}^\circ\longrightarrow {\rm Quot}^P.$$Over $A\times_k {\rm Quot}^P $ we have a universal quotient $\calO_{A\times_k {\rm Quot}^P}^2 \to \calN^{\rm univ}$. We let \[{\rm Quot}^{P, \circ}\subset {\rm Quot}^P\] be the locus where $\calN^{\rm univ}$ is an invertible $\calO_{A\times_k {\rm Quot}^P}$-module. Then, ${\rm Quot}^{P, \circ}\subset {\rm Quot}^P$ is open by \cite[Lemma 2.1.8]{HL} and $\rho$ factors via ${\rm Quot}^{P, \circ}$.

\begin{prop}\label{prop Quotosmooth} The tangent space of ${\rm Quot}^{P, \circ}$  at $k$-points  in the image of $\rho$ is of dimension $2d$.
\end{prop}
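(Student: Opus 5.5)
The tangent space of ${\rm Quot}^P$ at a $k$-point $[\calO_A^2 \surjects \calN]$ is $\Hom_{\calO_A}(\scrK, \calN)$, where $\scrK$ is the kernel \cite[Proposition 2.2.7]{HL}. At a point $\rho(\uy,f)$ we have $\calN = \calO_A(D)$ with $D = \tau^\ast\uy$, and $\scrK \cong \det(\calO_A^2)\otimes \calN^{-1} = \calO_A(-D)$. The last identification is because the sequence $0\arr\scrK\arr\calO_A^2\arr\calN\arr 0$ consists of vector bundles. Hence
\[ T_{\rho(\uy,f)}{\rm Quot}^{P,\circ} = \Hom(\calO_A(-D),\calO_A(D)) = H^0(A, \calO_A(2D)). \]
Since ${\rm Quot}^{P,\circ}$ is open in ${\rm Quot}^P$, the tangent spaces agree.

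Next I would compute this space by reducing to $Y$, as in the proof of the Proposition in \S\ref{sec turbfol}. We have $\calO_A(2D) = \tau^\ast\calO_Y(2\uy)$, and $\tau_\ast\calO_A = \calO_Y$ because the fibers of $\tau$ are connected and projective. The projection formula then gives
\[ H^0(A,\tau^\ast\calO_Y(2\uy)) \cong H^0(Y,\calO_Y(2\uy)). \]
Riemann--Roch on the elliptic curve $Y$ gives $\dim H^0(Y,\calO_Y(2\uy)) = 2d$, since $\deg(2\uy)=2d>0$. This yields dimension $2d$.

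The only step needing care is the identification of the kernel $\scrK$ with $\calO_A(-D)$ and its compatibility with the deformation-theoretic description of the tangent space. The surjection $\upsilon$ is given by the pair $(1,f)$, so its kernel is generated locally by $(f,-1)$ after rescaling. Globally it is the line bundle $\calO_A(-D)$, matching $\scrF\cong\calO_A(-D)$ from Theorem~\ref{thm classtrbfol}. As a sanity check, $\WW^\circ_{Y,d}$ has dimension $2d$: it has $d$ dimensions from $\Sym^dY$ plus $d$ from the fiber. So the equality of tangent dimensions is what will later show that $\rho$ is étale onto a smooth component.
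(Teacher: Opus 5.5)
Your proposal is correct and is essentially the paper's proof. Both arguments identify the tangent space with $\Hom(\scrK,\calN)\cong H^0(A,\calN^{\otimes 2})=H^0(A,\tau^\ast\calO_Y(2\uy))$ via Huybrechts--Lehn, reduce to $H^0(Y,\calO_Y(2\uy))$ using $\tau_\ast\calO_A=\calO_Y$ and the projection formula, and conclude $\dim = 2d$ by Riemann--Roch on $Y$.
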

\begin{proof}
Let $x$ be a $k$-valued point of ${\rm Quot}^{P,\circ}$ corresponding to a quotient $\calO_A^2\to \calN$ with $\calN$ an invertible sheaf. Let $\scrF \subset \calO_A^2$ be the kernel of the projection to $\calN$. It follows from \cite[Proposition 2.2.7]{HL} that the tangent space of ${\rm Quot}^{P,\circ}$ at~$x$ is identified with $\Hom_{\calO_A}(\scrF,\calN)$. Since $\calN$ is invertible,  we find $\Hom_{\calO_A}(\scrF, \calN)\cong H^0(A,\scrG)$ with $\scrG=\calN\otimes_{\calO_A} \scrF^\vee\cong \calN^2$. 

Assume that  $\calN = \tau^\ast \calO_Y(\delta)$. Using that the morphism $A \arr Y$ is proper with connected fibers, and making use of the projection formula, we get $\tau_\ast(\scrG)= \tau_\ast\!\left(\tau^\ast\calO_Y(2\delta)\right) \;\cong\; \left( \tau_\ast\,\mathcal{O}_A\right) \otimes_{\calO_Y}\calO_Y(2\delta)$. Since the morphism $\tau$ is proper with connected fibers, the latter is $\calO_Y(2\delta)$. We find that $H^0(A, \scrG) \cong H^0(Y, \calO_Y(2\delta))$ and the latter is $2d$-dimensional by Riemann-Roch.
\end{proof}

\ 

\begin{thm} \label{W and quot} The morphism $\rho$ identifies $\WW^\circ_{Y, d}$ with an irreducible component of ${\rm Quot}^{P, \circ}$; this component is therefore smooth and irreducible of dimension $2d$, naturally fibered over $\Sym^d Y$ with geometric fibers isomorphic to the complement of finitely many codimension $1$ subspaces in $\AA^d_k$.
\end{thm}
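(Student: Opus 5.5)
The plan is to show that $\rho$ is an open immersion with closed image in ${\rm Quot}^{P,\circ}$, and to read off the geometry from $\WW^\circ_{Y,d}$ itself.

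\emph{The source.} $\WW^\circ_{Y,d}$ is open in the rank-$d$ vector bundle $\WW_{Y,d}$ over $\Sym^d Y$. By Proposition~\ref{prop: Symd}, $\Sym^dY$ is smooth, projective and irreducible of dimension $d$, so $\WW^\circ_{Y,d}$ is smooth and irreducible of dimension $2d$.

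\emph{The fibers.} Over a geometric point $\uy=\sum n_P[P]$, the fiber of $\WW_{Y,d}$ is the $d$-dimensional space $H^0(Y,\calO_Y(\uy))$. The condition $(f)_\infty=\uy$ fails exactly when $f$ lies in $H^0(Y,\calO_Y(\uy-[P]))$ for some $P$ in the support of $\uy$. Each of these is a codimension-one subspace by Riemann--Roch, and there are finitely many of them. This gives the description of the geometric fibers.

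\emph{$\rho$ is a monomorphism.} Let $R$ be a $k$-algebra. A point of ${\rm Quot}^{P,\circ}(R)$ is a quotient $\calO_{A_R}^2\to\calN$ with $\calN$ invertible. This is the same as a morphism $\Phi_R\colon A_R\to\PP^1_R$ together with an identification $\calN\cong\Phi_R^\ast\calO(1)$. If the point comes from $\WW^\circ_{Y,d}(R)$, then $\Phi_R=\Psi_R\circ\tau_R$. Since $\tau$ is flat and proper with $\tau_\ast\calO_A=\calO_Y$ universally, the factorization through $\tau_R$ is unique. Hence $\Psi_R$ is determined, and by Proposition~\ref{prop PsiR} so is $(\delta_R,f_R)$. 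Thus $\rho$ is a monomorphism of schemes of finite type, hence unramified, hence injective on tangent spaces.

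\emph{$\rho$ is an open immersion.} At a $k$-point $x=\rho(w)$, Proposition~\ref{prop Quotosmooth} gives $\dim T_x{\rm Quot}^{P,\circ}=2d=\dim T_w\WW^\circ_{Y,d}$. So $d\rho$ is an isomorphism at $w$. The image of the $2d$-dimensional variety $\WW^\circ_{Y,d}$ under the injective map $\rho$ forces $\dim_x{\rm Quot}^{P,\circ}\geq 2d$. Combined with the tangent-space bound, ${\rm Quot}^{P,\circ}$ is smooth of dimension $2d$ at $x$. Then $\rho$ is a morphism between smooth varieties of the same dimension that is étale at every point of its source, and it is a monomorphism. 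Hence $\rho$ is an open immersion.

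\emph{The image is closed (main obstacle).} It remains to show that $\rho(\WW^\circ_{Y,d})$ is closed in ${\rm Quot}^{P,\circ}$, so that it is a full irreducible component. This is the step I expect to be hardest. Take a $k$-point $\calO_A^2\to\calN$ in the closure, with classifying map $\Phi\colon A\to\PP^1$. The degree of $\calN$ restricted to the translates $a+E$ is locally constant in flat families, so it vanishes. Hence $\Phi$ is constant on every translate of $E$ and factors as $\Psi\circ\tau$ with $\Psi\colon Y\to\PP^1$. The degree of $\Psi$ is read off from the linear coefficient $n^4 d$ of the Hilbert polynomial $P$ (see \S\ref{sec Hilbertpol}), so $\deg\Psi=d$. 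By Proposition~\ref{prop PsiR}, $\Psi$ is a point of $\WW^\circ_{Y,d}$, so the limit point lies in the image.

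The delicate parts are two. First, one must ensure that the degree-zero-on-$E$-fibers argument really forces factorization, which uses that $\Phi$ is defined everywhere because $\calN$ is invertible. Second, one must exclude degenerations in which the limit quotient corresponds to a linear foliation. The latter has $\deg\calN=0$ and therefore a different Hilbert polynomial, so it cannot occur.
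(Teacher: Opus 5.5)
Your argument is correct, and it takes a genuinely different route from the paper's.

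\textbf{The paper's route.} The paper works on the reduced connected component $S$ of ${\rm Quot}^{P,\circ}$ containing the image of $\rho$. Since $\mathrm{Hom}(A,A^\vee)$ is discrete, $\lambda_\calN$ is constant on $S$, so $\lambda_\calN=\tau^\vee\circ\lambda_Y\circ\tau$. The Mumford theta group of $\calN$, restricted to $E_S$, gives a point of $E^\vee(S)$. At $k$-points this point is trivial, by the classification of \S\ref{sec turbfol} and because the polarization forces $\tau'=\tau$. Since $S$ is reduced, it is therefore trivial. Hence $\calN$ descends to $Y_S$, which yields an explicit inverse $\nu\colon S\to\WW^\circ_{Y,d}$. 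Proposition~\ref{prop Quotosmooth} enters only at the end, to show that $S'$ is reduced.

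\textbf{Your route.} You get the family-level information infinitesimally instead. Unique factorization through the epimorphism $\tau_R$, together with Proposition~\ref{prop PsiR}, makes $\rho$ a monomorphism. Combined with the tangent-space count, $\rho$ is an \'etale monomorphism, hence an open immersion. This also fixes the scheme structure on the image, so no separate reducedness argument is needed. Closedness then becomes a purely pointwise question. You settle it using three ingredients:
\begin{itemize}
\item the degree of $\calN$ on $E$-translates is constant on the connected closure of the image, hence zero;
\item rigidity (or seesaw) then makes $\Phi$ factor through $\tau$;
\item the linear coefficient of $P$ forces $\deg\Psi=d$.
\end{itemize}
This replaces theta groups and faithfully flat descent.

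\textbf{Comparison.} Your approach is more elementary and avoids the fiber analysis of \S\ref{sec turbfol}. The paper's approach is more conceptual, since it shows directly that the universal quotient over the whole component descends to $Y_S$. Both arguments actually show that the image is a connected component, not merely an irreducible one.

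One step deserves to be written out. To pass from ``every $k$-point of the closure lies in the image'' to ``the image is closed'', you need that the image is open, which your open-immersion step supplies. You also need that ${\rm Quot}^{P,\circ}$, being of finite type over $k$, is Jacobson, so the closed set (closure minus image) is empty once it has no $k$-points.
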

\begin{proof}
Let $S'\subset {\rm Quot}^{P, \circ}$ be the connected component containing the image of $\rho$. Take $S$ to be $S'$  with its reduced induced structure. Let $\calO_{A_S}^2\to \calN$ be the universal quotient with $\calN$ an invertible $\calO_{A_S}$-module  with Hilbert polynomial $P(t)$. We claim that $\calO_{A_S}^2\to \calN$ is obtained by pullback via $\tau\colon A \to Y$ of a surjection $\calO_{Y_S}^2 \arr \calL$, where $\calL$ is an invertible $\calO_{Y_S}$-module of degree~$d$. 

Let $\lambda_\calN\colon A_S \to A_S^\vee$ be the homomorphism of abelian $S$-schemes defined by $\calN$. Since the Hom-scheme $\mathrm{Hom}(A,A^\vee)$ is the disjoint union of copies of $\Spec(k)$, $\lambda_\calN$ is constant on $S$. Computing at one point in the image of $\rho$, we deduce that $\lambda_\calN= \tau^\vee \circ \lambda_Y \circ \tau$ for $\lambda_Y$ a polarization on $Y$ of degree $d^2$.  Let $E\subset A$ be the elliptic curve given by the kernel of $\tau$. 

Consider the Mumford theta group $\scrG^+$ of $\calN$ (see, e.g., \cite{MB}), which sits in an exact sequence
\[1 \arr \GG_{m, S} \arr \scrG^+ \arr \scrK \arr 0,\]
and let $\scrG$ be its restriction to $E_S$. Note that $\lambda_\calN$ vanishes on $E_S$ and therefore $E_S \subseteq \scrK$. Now, $\scrG$ is contained in the Mumford theta group of $\calN\vert_{E_S}$ and thus we conclude that $\scrG$ is an extension of $E_S$ by $\GG_{m,S}$ and coincides with the Mumford theta group of $\calN\vert_{E_S}$. We now prove that $\scrG$ is a split extension. 

First of all, viewing~$\scrG$ as a $\GG_m$-bundle over $E_S$ whose associated line bundle is invariant under translations by $E_S$ (hence algebraically equivalent to $0$), $\scrG$ defines and is defined by a point $g\in E^\vee(S)$. So we have to show that $g$ is the zero section.  Since $S$ is reduced, it suffices to show that this holds true for any $k$-valued point of $S$. Take $s\in S(k)$ and consider the associated surjection $\calO_A^2\to \calN_s$. We know from the discussion in \S \ref{sec turbfol}  that there exists a quotient elliptic curve $\tau'\colon A\to Y'$ and a quotient invertible sheaf $\calO_{Y'}^2\to \scrL'$ whose pullback via $\tau'$ is $\calO_A^2\to \calN_s$. Then $\lambda_{\calN_s}= \tau'^\vee\circ \lambda_{Y'} \circ \tau'$ for $\lambda_{Y'}$ the polarization on $Y'$ defined by $\scrL'$. We know already that $\lambda_{\calN_s}= \tau^\vee \circ \lambda_Y \circ \tau$ for
$\lambda_Y$ a polarization on $Y$. Hence, $A\to Y'$ must coincide with the quotient $\tau\colon A \to Y$. Since $\calN_s=\tau^\ast(\scrL')$ we conclude that $\scrG_s$ is trivial, i.e., $g_s=0 $ in $E^\vee(k)$. 

We have shown that $\scrG$ is a split extension. Since $A\to Y$ is smooth and surjective, and hence faithfully flat, this implies that $\calN$ descends to $Y_S=A_S/E_S$ by faithfully flat descent (see \cite[(1.2) Proposition]{MB}).

We have shown that  $\calN$ is $\tau^\ast_S(\calL)$ for an invertible $\calO_{Y_S}$-module $\calL$. Since $\tau$ is proper with connected fibers, we have $\calL\cong \tau_{S,\ast}(\calN)$. Taking $\tau_{S,\ast}$ of the map $\calO_{A_S}^2\to \calN$ we get a morphism 
$\calO_{Y_S}^2=\tau_{S,\ast}\bigl(\calO_{A_S}^2) \arr \tau_{S,\ast}(\calN)\cong \calL$. Its pullback via $\tau_S$ is the map $\calO_{A_S}^2\to \calN$. Since $\tau_S$ is faithfully flat, we conclude that $\calO_{Y_S}^2\to \calL$ is surjective. The fact that $\calL$ has degree $d$ fiberwise over $S$ can be checked pointwise on $S$ and follows from the computation in \S \ref{sec Hilbertpol}. 

We deduce that the morphism $A_S\to \PP^1_S$ defined by  $\calO_{A_S}^2\to \calN$ factors as the composition of $\tau_S=\tau\times_k S\colon A_S\to Y_S$ and an $S$-morphism $\Psi_S\colon Y_S\to \PP^1_S$.  Since $\calL$ has degree $d$ fiberwise over $S$, we conclude that $\Psi_S$ is finite and flat of degree $d$ fiberwise over $S$. It follows from the fiberwise criterion of flatness \cite[Lemma 37.16.3]{StackProject} that $\Psi_S$ is flat. Since $\Psi_S$ is a morphism of projective schemes over $S$, it is proper. Since it is quasi-finite, it is also finite. Hence, $\Psi_S$ is finite and flat of degree $d$. Proposition \ref{prop PsiR} implies that $\Psi_S$ defines a morphism $\nu\colon S \to \WW_{Y, d}^\circ$. Since the pullback of $\calO_{Y_S}^2\to \calL$ via $\tau_S$ is  $\calO_{A_S}^2\to \calN$ we have $\rho \circ \nu={\rm Id}_S$.   Similarly, we have $\nu \circ \rho={\rm Id}_{\WW_{Y,d}^\circ}$. 

Hence, $\rho$ defines an isomorphism $S\cong \WW_{Y, d}^\circ$.  Since $\WW_{Y, d}^\circ$  is smooth and $2d$-dimensional, it follows from Proposition \ref{prop Quotosmooth} that $S'=S$. 

\end{proof}

\ 

\begin{cor} \label{cor: 098245} Let $k$ be an algebraically closed field of characteristic $p$ and $A$ an abelian surface over $k$. Let $E \subset A$ be an elliptic curve and $Y = A/E$, as before. Choose a trivialization of $\calT_A \cong \calO_A^2$. There exists a closed subset $\calZ_{Y,d}$ of $\WW^\circ_{Y,d}$ parameterizing $p$-closed foliations $\scrF$ with invariants $E$ and $d$. 
\end{cor}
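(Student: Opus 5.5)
The plan is to express $p$-closedness, for the universal family over $\WW^\circ_{Y,d}$, as the vanishing of a global section of a coherent sheaf. The zero locus of such a section is closed.

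\emph{Step 1: the universal foliation.} Over $A\times_k\WW^\circ_{Y,d}$ we have the universal quotient $\upsilon\colon\calO^2\to\calN=\calO(D_{\WW^\circ_{Y,d}})$ constructed in \S\ref{subsec: morphism to quot}. Using the chosen trivialization $\calT_A\cong\calO_A^2$, its kernel is an invertible subsheaf $\scrF^{\rm univ}\subset\calT_{A\times\WW^\circ/\WW^\circ}$. On each fiber over a $k$-point $(\uy,f)$ it restricts to the foliation of Theorem~\ref{thm classtrbfol}. The fiberwise restriction is correct because $\calN$ is flat over the base and $\upsilon$ is surjective, so taking the kernel commutes with base change.

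\emph{Step 2: the $p$-curvature map.} Following the criterion recalled in \S\ref{567} (cf.\ \cite[Lemma 4.2]{Ekedahl}), consider the relative $p$-th power map $D\mapsto D^p$ composed with the projection to $\calN$:
\[\psi\colon F^\ast\scrF^{\rm univ}\to\calN .\]
Here we work with relative derivations, i.e.\ those that are $\calO_{\WW^\circ}$-linear. The map $D\mapsto D^p$ is $p$-linear, and for the composite $D\mapsto D^p\bmod\scrF$ the Hochschild formula $(gD)^p=g^pD^p+(gD)^{p-1}(g)D$ shows that the correction term lies in $\scrF$. Hence $\psi$ is $\calO$-linear on the Frobenius twist, and it is a global section $s$ of the line bundle $\mathcal M=\calN\otimes(\scrF^{\rm univ})^{\otimes -p}$ on $A\times\WW^\circ_{Y,d}$. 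This construction is compatible with base change to a point, so $\scrF_w$ is $p$-closed if and only if $s|_{A\times\{w\}}=0$.

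\emph{Step 3: closedness of the vanishing locus.} The key point is to show that $\{w: s|_{A_w}=0\}$ is closed in $\WW^\circ_{Y,d}$. The zero scheme $V(s)\subset A\times\WW^\circ$ is closed. The required set is $\{w : A_w\subset V(s)\}$, and since $A\times\WW^\circ\to\WW^\circ$ is proper and flat, this set is closed. To see this, write it as the complement of the image of the open set $A\times\WW^\circ\setminus V(s)$ under the projection, which is open because the projection is flat of finite type. Alternatively, the set is the vanishing locus of the section of the coherent sheaf $\pi_\ast\mathcal M$, taken fiberwise via cohomology and base change. The openness-of-image argument is cleaner, so I would use it. I would then define $\calZ_{Y,d}$ as this set with its reduced structure.

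The main obstacle I expect is verifying in Step 2 that the relative $p$-th power map is genuinely $\calO$-linear modulo $\scrF$ in families, and that it agrees fiberwise with the pointwise notion. This is essentially Ekedahl's lemma applied to a smooth morphism rather than a variety, and I would check it locally using the explicit generator $f_2\partial_1-f_1\partial_2$ of $\scrF$.
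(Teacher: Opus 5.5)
Your proposal is correct and follows essentially the same route as the paper. You form the universal kernel $\scrF\subset\calO^2$, take the $p$-curvature map $F_{\rm abs}^\ast\scrF\to\calO^2/\scrF$, and show that the set of base points over which its zero scheme $\calV$ fills the whole fiber is closed. The differences are cosmetic: the paper works on the Quot component $S\cong\WW^\circ_{Y,d}$ and concludes by upper semicontinuity of fiber dimension for the proper map $\calV\to S$ (EGA IV 13.1.5, with $p$-closed $\Leftrightarrow \dim\calV_s=2$). You work directly on $\WW^\circ_{Y,d}$ and use that the flat projection sends the open complement of $V(s)$ to an open set, which is equally valid given that the fibers $A_w$ are reduced.
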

\begin{proof} Given Theorem~\ref{W and quot}, this is a statement about a component $S$ of ${\rm Quot}^{P, \circ}$. If $\calO_{A_S}^2 \arr \calN^{\rm univ}$ is the universal quotient over $S$, with kernel $\calF$, then we have the $p$-curvature morphism of \S~\ref{567}, $\kappa\colon F^\ast_{abs}\calF \arr \calO_{A_S}^2/\calF$. Its vanishing locus $\calV$ is a closed subscheme of~$A_S$. Applying the semi-continuity theorem for the proper morphism $\calV \arr S$ (\cite[IV 13.1.5]{EGA}), we find that the $p$-closed locus $\calZ_{Y,d}$, characterized as the points $s\in S$ such that  
$\dim(\calV_s) = 2$, is closed. 
\end{proof}

\begin{rmk} The computations of \S~\ref{123} can be viewed as understanding the image of the projection $\calZ_{Y,d} \arr \Sym^d Y$.
\end{rmk}

\section{Quotients of abelian surfaces by foliations and $K3$ surfaces} In this part of the paper we analyze some properties of the quotient of an abelian surface $A$ by a smooth $p$-closed foliation $\scrF$ and provide an application to $K3$ surfaces. In particular, we show that if $\scrF$ is not linear then $B = A/\scrF$ is not an abelian surface, and its Albanese variety is $A^{(p)}$. We also construct $p$-closed foliations on certain Kummer $K3$ surfaces with well-controlled singularities. At least in characteristic $0$, or characteristic $p>3$, $K3$ surfaces do not carry smooth foliations, let alone smooth $p$-closed foliations, so this is the best one can hope for. 
\

\subsection{Quotients of abelian surfaces by foliations} 
Let $A$ be an abelian surface over an algebraically closed field $k$ of characteristic $p$ and let $\scrF$ be a rank~$1$ $p$-closed smooth foliation on $A$, linear or turbulent. Let $B = A/\scrF$ be the associated variety with the canonical factorization of the Frobenius morphism $A \arr A^{(p)}$, 
\[ \xymatrix{A \ar[r]^\pi & B \ar[r]^\rho & A^{(p)}.}\]

\begin{prop}\label{prop: geometry of the quotient of A by a foliation}  The variety $B$ carries a canonical $p$-closed foliation $\scrG$ such that we can identify $B/\scrG$ with $A^{(p)}$ in such a way that the canonical morphism $B\arr B/\scrG$ is $\rho$. Furthermore, as in \cite{SB}, we have the exact sequences
\begin{equation}
 \xymatrix{0 \ar[r]  & \scrF \ar[r] &\calT_A \ar[r] & N_\scrF = \pi^\ast \scrG \ar[r] & 0;}
\end{equation}
\begin{equation}\label{equation 987}
 \xymatrix{0 \ar[r] &\scrG \ar[r] &\calT_B \ar[r] & \calT_B/\scrG = \rho^\ast \scrF^{(p)} \ar[r] & 0.}
\end{equation}
The following statements hold:\begin{enumerate}
    \item If $\scrF$ is linear, $B$ is an abelian variety. 
    \item If $\scrF$ is turbulent, $B$ is not an abelian variety; in fact, $\pi^\ast K_B = -(p-1) \scrF$.
    \item $c_2(B)$ is torsion. 
\end{enumerate}
\end{prop}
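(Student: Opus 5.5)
The plan rests on the Ekedahl/Rudakov--Shafarevich correspondence between $p$-closed foliations and purely inseparable factorizations of Frobenius. First I construct $\scrG$ and verify the two exact sequences. Since $\pi\colon A\arr B$ is finite purely inseparable of degree $p$ and $\rho\circ\pi$ is the relative Frobenius, the composite $B \arr A^{(p)} \arr B^{(p)}$ is the relative Frobenius of $B$ (up to the identification $A^{(p)}\arr B^{(p)}$ induced by $\pi^{(p)}$). So $\rho$ is a degree $p$ morphism through which $\Fr_B$ factors. By Jacobson/Ekedahl, $\rho$ corresponds to the $p$-closed foliation $\scrG:=\ker(d\rho\colon \calT_B\arr \rho^\ast\calT_{A^{(p)}})$, and $B/\scrG\cong A^{(p)}$. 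Smoothness of $B$ holds because $\scrF$ is smooth. The two sequences are then the standard ones from \cite{SB}. Concretely, $d\pi\colon \calT_A\arr \pi^\ast\calT_B$ has kernel $\scrF$ and image $\pi^\ast\scrG$, since $\pi^\ast\scrG$ is exactly what dies under $d(\rho\pi)=d\Fr=0$. Symmetrically, $d\rho$ has kernel $\scrG$ and image $\rho^\ast\scrF^{(p)}$. I will quote these rather than reprove them, checking only the ranks and the saturation claims.

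For (1), if $\scrF$ is linear it comes from a $p$-Lie subalgebra, hence from a subgroup scheme $H\subset\Ker\Fr_A$. By \S\ref{subsec:linear p closed}, $B=A/H$, which is an abelian variety.

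For (2), I take determinants in \eqref{equation 987}: $K_B^{-1}=\det\calT_B=\scrG\otimes\rho^\ast\scrF^{(p)}$. Pulling back by $\pi$ gives $\pi^\ast\scrG\cong N_\scrF\cong\scrF^{-1}$ (using $\det\calT_A$ trivial) and $\pi^\ast\rho^\ast\scrF^{(p)}=\Fr_A^\ast\scrF^{(p)}\cong\scrF^{p}$. Hence $\pi^\ast K_B^{-1}\cong\scrF^{p-1}$, that is, $\pi^\ast K_B=-(p-1)\scrF$. For turbulent $\scrF$, Theorem~\ref{thm classtrbfol} gives $\scrF\cong\calO_A(-D)$ with $D=\tau^\ast(f)_\infty\neq 0$ effective, so $\pi^\ast K_B=(p-1)D$ is nontrivial and $K_B$ is not torsion. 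An abelian variety has trivial canonical class, so $B$ is not abelian. The one point to double-check is the identification $\Fr_A^\ast\scrF^{(p)}=\scrF^{\otimes p}$, which is standard for line bundles.

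For (3), I use the Whitney formula on \eqref{equation 987}: $c_2(B)=c_1(\scrG)\cdot c_1(\rho^\ast\scrF^{(p)})$. Applying $\pi^\ast$ multiplies degrees by $p$ and gives $c_1(\scrF^{-1})\cdot c_1(\scrF^{p})=-p\,\scrF^2$. In the turbulent case $\scrF^2=D^2=0$ because $D$ is a pullback from the curve $Y$, and in the linear case $\scrF$ is trivial. Thus $\deg c_2(B)=0$, so $c_2(B)$ is numerically zero, which is the sense of ``torsion'' here for a zero-cycle class. The main obstacle I expect is pinning down the precise meaning of \emph{torsion} for $c_2$. If it is meant in $CH_0$ or in $\ell$-adic cohomology, the degree computation above settles the cohomological version via $\pi_\ast\pi^\ast=p$. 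For Chow groups I would instead argue with $\pi_\ast\pi^\ast c_2(B)=p\,c_2(B)$ and the vanishing of $\pi^\ast c_2(B)$ up to torsion on $A$, for which I would try the same Whitney computation carried out at the level of Chow classes.
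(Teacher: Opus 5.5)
Your proposal follows the paper's proof. Both quote the two exact sequences, pull the second one back along $\pi$ using $\pi^\ast\scrG\cong\scrF^\vee$ and ${\rm Fr}_A^\ast\scrF^{(p)}\cong\scrF^{\otimes p}$, take determinants for (2), and use the Whitney formula to get $\pi^\ast c_2(B)=-pD^2$ for (3). For (3), the paper means torsion in ${\rm CH}_0(B)$, so it is your Chow-level fallback that proves the statement; the numerical reading is much weaker, since numerical classes of zero-cycles are just degrees. That fallback needs no ``up to torsion'' caveat, because $D^2=0$ already in ${\rm CH}_0(A)$: $D=\tau^\ast\delta$ is pulled back from the curve $Y$. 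Hence $p\,c_2(B)=\pi_\ast\pi^\ast c_2(B)=0$, exactly as in the paper.
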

\begin{proof} This is mostly well known. For the first claim one uses that $\scrF$ is the covariant Dieudonn\'e module of a group scheme $G \subseteq \Ker({\rm Fr}_A\colon A \arr A^{(p)})$ and $A/\scrF  = A/G$ is therefore an abelian variety. 

For the exact sequences, see for example \cite{Ekedahl} (especially Corollary 3.4), \cite{MP} (especially Corollary 1.11, which however has a sign mistake) or \cite{SB} (especially Proposition 9.1.2.3). Write $\scrF = \calO_A(-D)$; when $\scrF$ is linear, $D = 0$, while when $\scrF$ is turbulent $D$ is an effective fibral divisor on $A$, as we discussed above. Applying $\pi^\ast$ to (\ref{equation 987}) we get
\[ \xymatrix{0 \ar[r] &\pi^\ast\scrG  \ar[r] &\pi^\ast\calT_B \ar[r] & \pi^\ast \rho^\ast \scrF^{(p)} \ar[r] & 0.}\]
As $\rho\circ \pi = {\rm Fr}_A$, we find
\[ \xymatrix{0 \ar[r] &\scrF^\vee  \ar[r] &\pi^\ast\calT_B \ar[r] & p\scrF \ar[r] & 0.}\]
Taking determinants we find the previous formula $\pi^\ast K_B = -(p-1) \scrF$, but we also find
\[c_2(\pi^\ast \calT_B) = \pi^\ast(c_2(B)) = \scrF^\vee . p\scrF = -pD^2 = 0.\] 
Use that $\pi$ is a finite flat morphism of degree $p$ and so $\pi_\ast \pi^\ast$ acts as multiplication by $p$ on ${\rm CH}_0(B)$ to conclude that $c_2(B)$ is $p$-torsion. 
\end{proof}

\vspace{3cm}

\begin{prop} \label{Prop: albanese} Assume that $\scrF$ is a turbulent foliation. Taking $0_B$ to be the image of $0_A$ under the morphism $\pi\colon A \arr B$, we have
\[ {\rm Alb}(B, 0_B) \cong A^{(p)}.\]
\end{prop}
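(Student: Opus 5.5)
The plan is to show that the morphism $\rho\colon B \arr A^{(p)}$ from the factorization of Frobenius has the universal property of the Albanese. Let $\alpha\colon B \arr {\rm Alb}(B)$ be the Albanese morphism with $\alpha(0_B)=0$. By that universal property, $\rho$ factors as $\rho = \phi\circ\alpha$ for a homomorphism $\phi\colon {\rm Alb}(B)\arr A^{(p)}$. Composing the other way, $\alpha\circ\pi\colon A \arr {\rm Alb}(B)$ is a pointed morphism from an abelian variety, hence a homomorphism $\psi$. Then $\phi\circ\psi = \rho\circ\pi = {\rm Fr}_A$, so $\psi$ is an isogeny onto its image and $\dim {\rm Alb}(B) \geq 2$. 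Since $\alpha$ is generated by $B$ (which is a surface) and $\alpha\circ \pi$ is surjective onto the abelian subvariety generated, we get $\dim{\rm Alb}(B)=2$. It follows that $\phi$ and $\psi$ are isogenies with $\deg\phi\cdot\deg\psi = \deg {\rm Fr}_A = p^2$.

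The key step is then to pin down the degrees. We know $\pi$ is purely inseparable of degree $p$, and $\alpha\circ\pi = \psi$. Suppose $\phi$ were not an isomorphism. Then $\psi$ would be a homomorphism $A\arr{\rm Alb}(B)$ of degree $1$ or $p$ through which $\pi$ is dominated, since $\psi = \alpha\circ\pi$.

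If $\deg\psi = 1$, then $\psi$ is an isomorphism, so $\pi$ has a left inverse. This is impossible because $\pi$ has degree $p$.

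If $\deg\psi = p$, then $\deg\phi = p$, so $\ker\psi$ is a subgroup scheme $H\subset \ker{\rm Fr}_A$ of order $p$. Since $\psi = \alpha\circ\pi$ and $\deg\pi = p = \deg\psi$, the map $\alpha$ restricted to the image is birational. On function fields, $k(B) = k(A)^{\scrF}$ and $k(A/H)$ are then both degree-$p$ subfields of $k(A)$ containing $k(A)^p$, with $k(A/H)\subseteq k(B)$, hence they are equal. Equality of these subfields means the foliation $\scrF$ coincides generically with the linear foliation attached to $\Lie(H)$, because Jacobson's correspondence between $p$-closed foliations and intermediate fields is bijective. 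Two saturated foliations agreeing generically are equal, so $\scrF$ would be linear, contradicting turbulence. Hence $\deg\psi = p^2$, $\phi$ is an isomorphism, and ${\rm Alb}(B,0_B)\cong A^{(p)}$.

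The expected obstacle is making the ``dominated'' step rigorous. Concretely, one must show that $\psi = \alpha\circ\pi$ forces $k(A/H)\subseteq k(B)$ inside $k(A)$, i.e.\ that $\alpha$ is defined on all of $B$ and that the inclusion of function fields holds. This needs care because $B$ may be singular only if $\scrF$ has singularities; here $\scrF$ is smooth, so $B$ is smooth and the Albanese morphism exists as a morphism. I would argue with function fields, $\psi^\ast k({\rm Alb}) \subseteq \pi^\ast k(B)$, and then invoke Jacobson's Galois correspondence as recalled in the introduction (Ekedahl).
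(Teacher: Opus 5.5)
Your proof is correct, and its skeleton matches the paper's. Both use the Albanese diagram with $\psi=\alpha\circ\pi$ and $\phi\circ\psi=\mathrm{Fr}_A$, and the degree count $\deg\phi\cdot\deg\psi=p^2$; in the paper's notation this reads $p^2=\deg\gamma\cdot\deg\alpha\cdot\deg\pi$.

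The two arguments differ only in how they exclude the bad case $\deg\alpha=1$. The paper observes that $\alpha$ is finite, since $\gamma\circ\alpha=\rho$ is. It then says $\alpha$ cannot be an isomorphism because $B$ is not an abelian variety. That fact rests on the canonical class formula $\pi^\ast K_B=-(p-1)\scrF$, with $\scrF\cong\calO_A(-D)$ and $D\neq 0$, from Proposition~\ref{prop: geometry of the quotient of A by a foliation}. Implicitly, it also uses that a finite birational morphism onto the normal variety ${\rm Alb}(B)$ is an isomorphism.

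You instead stay at the level of function fields. If $\alpha$ is birational, then $k(A/H)=k(B)$ inside $k(A)$, where $H=\Ker\psi\subseteq\Ker\mathrm{Fr}_A$ has order $p$. Jacobson's correspondence, together with the identification of $A\arr A/H$ with the quotient by the linear foliation $\Lie(H)\otimes\calO_A$ (recorded in \S\ref{subsec:linear p closed}), then forces $\scrF$ to be linear.

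What your route buys: it needs neither the computation of $K_B$ nor the finiteness of $\alpha$, and it uses turbulence directly through its definition, namely non-linearity. It also explains why the answer is $A^{(p)}$. The inclusion $\psi^\ast k({\rm Alb}(B))\subseteq\pi^\ast k(B)$ forces $\Lie(\Ker\psi)$ to contain $\scrF$ generically. For a non-linear rank-$1$ foliation on a surface, the only subgroup scheme of $\Ker\mathrm{Fr}_A$ with this property is $\Ker\mathrm{Fr}_A$ itself. The paper's route, in turn, is shorter once the proposition on the geometry of $B$ is available.

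Two minor points. The case $\deg\psi=1$ can be dispatched at once by multiplicativity of degrees, since $\deg\psi=\deg\alpha\cdot p$. Your sentence ``$\alpha$ is generated by $B$'' should instead say that $\alpha(B)=\psi(A)$ is an abelian subvariety generating ${\rm Alb}(B)$, hence equal to it.
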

\begin{proof} We have a commutative diagram 
\[ \xymatrix@C=2cm{A \ar[r]^\cong\ar[d]_\pi^{/\scrF} & {\rm Alb}(A) = A\ar[d]^\varphi\\
B \ar[r]^\alpha \ar[d]_\rho^{/\scrG} & {\rm Alb}(B, 0_B)\ar[d]^\gamma \\
A^{(p)} \ar[r]^\cong & {\rm Alb}(A^{(p)}) = A^{(p)}.
}\]
This diagram shows that $\alpha$ is a finite flat morphism. Note that the morphisms $\rho\circ \pi$ and $\gamma\circ \varphi$ are simply  ${\rm Fr}_A$. Therefore, $p^2 = \deg(\gamma) \deg(\alpha) \deg(\pi) = p\deg(\gamma) \deg(\alpha)$. Thus, as $\alpha$ is \textit{not} an isomorphism, $\deg(\alpha) =~p$ and $\deg(\gamma) = 1$, and we conclude that $ {\rm Alb}(B, 0_B) \cong A^{(p)}$.
\end{proof}

\ 

\subsection{Foliations on Kummer $K3$ surfaces} 
We shall use our results to construct Kummer $K3$ surfaces with $p$-closed foliations with rather mild singularities. We begin with a quick recall concerning $K3$ surfaces and motivate our construction by explaining why the introduction of singularities is necessary. Before getting to the case of $K3$ surfaces proper, we discuss an example in the plane as it will inform our calculations later. 

\subsubsection{A quick recall about $K3$ surfaces.} A good reference is \cite{H}. Recall that a $K3$ surface over an algebraically closed field~$k$ is a projective non-singular connected surface $X$ such that its canonical bundle $K_X = \bigwedge^2 \Omega_X$ is trivial and its irregularity is zero. That is,
\[ K_X \cong \calO_X, \qquad H^1(X, \calO_X) = 0.\]
One can reformulate the first property as saying that $X$ has a nowhere-vanishing regular $2$-form.

Using Serre duality and the triviality of $K_X$, one finds $h^0(X, \calO_X) = h^2(X, \calO_X) = 1$ and thus the arithmetic genus
\[ g_a(X) = \chi(\calO_X) -1 = 1.\]
Using Noether's formula $2 = g_a(X) +1= \chi(\calO_X) =  \frac{1}{12}(c_1(X)^2 + c_2(X))$ one concludes that
\[ c_1(X) = 0, \quad c_2(X) = 24.\]

\medskip

If $k = \CC$, $c_2(X)$ is the topological Euler characteristic. As $b_0(X) = b_4(X) = 1$ (as $X$ is connected) and $b_1(X) = b_3(X) = 0$ (the first is deduced by taking the cohomology of the exponential sequence $0 \arr \ZZ \arr \calO_X \arr \calO_X^\times \arr 0$, and the second follows by Poincar\'e duality), one can thus deduce that $b_2(X) = 22$. Further, as $H^1(X, \calO_X) = 0$, ${\rm Pic}(X)$ embeds as a saturated sublattice in $H^2(X, \ZZ)$, which is a torsion-free group, known to be isomorphic to the even unimodular lattice $E_8(-1)^{\oplus 2} \oplus U^3$, where $U$ is a hyperbolic plane and $E_8(-1)$ is the $E_8$-lattice with minus its quadratic form. It is thus of signature $(3,19)$. By the Hodge index theorem, ${\rm Pic}(X)$ has signature $(1, \rho-1)$ for some $\rho$ and it is known that $\rho$ can take any value between $1$ and $20$, with $\rho=1$ being the generic case. 

In characteristic $p$ the bound on $\rho$ fails and in fact $\rho$ could be as large as $22$. Examples are the supersingular Kummer surfaces, obtained by the procedure we study below where the original abelian surface $A$ is supersingular (that is, in our notation, both $E$ and $Y$ are supersingular elliptic curves).

\subsubsection{Examples of $K3$ surfaces.} The main examples of $K3$ surfaces are the following.
\begin{itemize}
\item Kummer $K3$ surfaces. This is our main example. Let $k$ be an algebraically closed field of characteristic different from $2$. One starts with an abelian surface $A$ over $k$ and takes the quotient $Q = A/\{\pm 1\}$ that has 16 singular points. The blow-up $\tilde Q$ of $Q$ at those singular points is a so-called Kummer $K3$ surface.  
\item The double cover $X$ of the projective plane $\PP^2$ branched along a smooth sextic curve, in characteristic different than $2$.
\item A smooth quartic surface in $\PP^3$.
\item Let $k$ be an algebraically closed field of characteristic zero or $p>3$. Let $\pi\colon X \arr \PP^1_k$
 be a relatively
minimal elliptic fibration with a section, where $X$ is a smooth projective surface. If $\chi(X, \calO_X) = 2$, then $X$ is a $K3$ surface. Its singular fibers have Kodaira types and their Euler characteristics sum to $24$. Here Euler characteristic is topological over $\CC$ and is interpreted using \'etale cohomology in general. A general elliptic $K3$ surface with a section has 24 singular fibers, all of type $I_1$, that is, irreducible rational curves with one node.
\end{itemize}

\subsubsection{Non-existence of smooth foliations on $K3$ surfaces.} In characteristic $0$, or $p> 3$, on a  $K3$ surface $X$ there are no smooth $p$-foliations. In characteristic $0$, where the notion of $p$-foliation does not make sense, we are talking simply about smooth foliations and the statement is that none exist for any $K3$ surface.  The proof is based on analytic techniques. 

Let $\scrF$ be a foliation on a complex projective non-singular surface $X$ with isolated singularities. Let $\calT_\scrF$ be its tangent bundle (we introduce the notation to conform with cited literature; for us there is no difference between $\scrF$ and $\calT_\scrF$), which is a line bundle. On the other hand, if we define $N_\scrF$ to be $\calT_X/\calT_\scrF$ then $N_\scrF$, which we call here the virtual normal bundle, is not a line bundle, but only a coherent sheaf that is free outside the singular locus ${\rm Sing}(\scrF)$ of $\scrF$; cf. \S~\ref{sec:foliations generalities}. In $K(X)$ (the $K$-group of coherent sheaves on $X$) we can write
\[ [N_\scrF]  = [\calT_X]-[\calT_\scrF], \]
and the formalism of Chern classes applies. In particular, the total Chern class $c(N_\scrF)$ is defined by 
\[ c(N_\scrF) = \frac{c(\calT_X)}{c(\calT_\scrF)} = \frac{1 + c_1(X) + c_2(X)}{1 + c_1(\calT_\scrF)},\]
yielding the classes
\[ c_1(N_\scrF) = c_1(X) - c_1(\calT_\scrF), \qquad c_2(N_{\scrF}) = c_2(X) - c_1(\calT_\scrF) \cdot c_1(N_{\scrF}). \]
The Baum--Bott Theorem states the following formulas.
\[c_1(N_{\scrF})^2=
\sum_{p \in {\rm Sing}(\scrF)} {\rm Res}_{\scrF}(c_1(N_{\scrF})^2, p), 
\]

\[c_2(N_{\scrF})=
\sum_{p \in {\rm Sing}(\scrF)} {\rm Res}_{\scrF}(c_2(N_{\scrF}), p) .
\]
The exact definition of the residues is of no importance to us, as we now specialize to the case where $\scrF$ is a smooth foliation in which case the sum is empty and $N_\scrF$ is a line bundle and so $c_2(N_\scrF) = 0$. We then find
\[ c_1(N_\scrF)^2 = 0, \qquad c_2(N_\scrF) = 0.\]
Assuming further that $X$ is a $K3$ surface, we have $c_1(X) = 0, c_2(X) = 24$. Thus, we find that 
$ c_1(N_\scrF) = c_1(X) - c_1(T_\scrF) = - c_1(T_\scrF)$ and substituting that in the formula for $c_2(N_\scrF)$ we get that
\[ 0 = c_2(N_\scrF) = c_2(X) - c_1(T_\scrF) \cdot c_1(N_{\scrF}) = 24 + c_1(N_\scrF)^2.\]
But $c_1(N_\scrF)^2 = 0$. Contradiction. 

\medskip

\id In characteristic $p$ the argument can be made to work with certain modifications, namely, when one views Chern classes as taking values in $ H^r(X,\Omega_X^r)$. This interpretation uses a cycle class map $K_0({\rm Vect}(X)) \arr H^{2\ast}_{\text{dR}}(X)$ \cite[50.21]{StackProject}. The Bott vanishing result $c_1(N_\scrF)^2 = 0$ follows from \cite[Theorem 4.1]{FMST}. One again derives the equality $24 = 0$, which now takes place in $k$. Thus, if $p = {\rm char}(k) > 3$ we conclude that there are no smooth foliations on $K3$ surfaces in characteristic~$p$ ($p$-closed or not!). We remark that the non-existence of $p$-closed foliations for $p>3$ also follows from \cite{RS}.

\subsubsection{Foliations on the plane and the quadratic cone} Let $k$ be a field of characteristic different from~$2$. Consider a foliation $\scrF_{\AA^2}$ in the plane $\AA^2_{x, y}$, with coordinates $x, y$, given by the differential form
\[ \omega = f(x, y) dx + dy,\]
where $f(x, y)$ is an even polynomial: $f(-x, -y) = f(x, y)$ and is thus a polynomial in $x^2, xy$ and $y^2$.
Let $G = \{ \pm 1\}$ act on $\AA^2$ by 
\[ [-1](x, y) = (-x, -y).\]
Note that $[-1]^\ast \omega = -\omega$ and consequently, $\scrF_{\AA^2}\cong [-1]^\ast \scrF_{\AA^2}$.

Let $Q = \AA^2/G$ be the quotient of the plane by the action of $G$. Then $Q$ is the quadratic cone with coordinate ring 
\[ k[x^2,xy,y^2]\cong k[u,v,w]/(uw-v^2).\] 
The morphism $\varphi\colon \AA^2 \arr Q\subset \AA^3_{u, v, w}$ is given by
\[ \varphi(x, y) = (x^2, xy, y^2) = (u, v, w),\]
and is a Galois cover that is \'etale outside the point $0$. The function $f$ can be expressed by a polynomial $F$ in $u, v, w$. The differentials $du, dv, dw$ pull back to differentials on $\AA^2$ given by
\[ du = dx^2 = 2x \,dx, \quad dv = d(xy) = x \,dy + y \,dx, \quad dw = dy^2 = 2y\, dy.\] 
We may replace $\omega$ by $2x\,\omega$, which is now invariant under the action of $G$, and 
\[ 2x\,\omega = f\cdot 2x\,dx + 2x\,dy = F\,du + 2\,dv-2y\,dx.\]
As $y/x=v/u$ we have $2y\, dx = (v/u) 2x\,dx$ and so $2x\,\omega = F\,du + 2\,dv-(v/u) \,du$. Multiplying by $u$, we get that 
\[ \eta:= 2ux\,\omega = uF\,du + 2u\,dv-v \,du = (uF-v)\,du + 2u\,dv.\]
It is a regular $1$-form on $Q$ that vanishes along the line on the cone given by $u=v=0$, and only there. Similarly, consider $\eta^\prime$ given by
\[ \eta^\prime = 2wy \omega = (w - F v)dw  + 2F w dv,\]
which is also a regular $1$-form on $Q$ that vanishes along the line $v = w = 0$, and only there. As $\eta$ and $\eta^\prime$ differ by a meromorphic function, they define together a foliation $\scrF_Q$ on $Q$, with an isolated singular locus equal to the vertex of the cone $u=v=w=0$. This foliation is the descent of $\scrF_{\AA^2}$.

Let $\tilde Q$ be the blow-up of $Q$ at its vertex. We can also blow up $\AA^2$ at the origin to get a variety~$\tilde \AA^2$. The action of $G$ extends to $\tilde \AA^2$ and, as the points on the exceptional divisor $E$ correspond to the slopes $x/y$, we find that the action of $G$ is trivial on $E$. We have
\[ \Phi\colon \tilde \AA^2 \arr \tilde Q = \tilde \AA^2/G,\]
where the map is completely ramified along $E$; namely, if we call $D$ the exceptional divisor on $\tilde Q$, then $\Phi^{-1}(D) = 2E$.

As $\tilde Q$ is birational to $Q$, the foliation $\scrF_Q$ induces a foliation $\scrF_{\tilde Q}$ on $\tilde Q$ that is smooth at the very least outside~$D$. In the same way, the foliation $\scrF_{\AA^2}$ induces a foliation $\scrF_{\tilde \AA^2}$. All this is summarized in the commutative diagram of foliated varieties
\[ \xymatrix@C=3cm{(\AA^2, \scrF_{\AA^2}) \ar[d]^{/G} & (\tilde \AA^2, \scrF_{\tilde \AA^2}) \ar[l]^{\text{blow-down}}\ar[d]^{/G}\\ (Q, \scrF_Q) & (\tilde Q, \scrF_{\tilde Q})\ar[l]^{\text{blow-down}}. }
\]
It is easier to calculate the singularities of $\scrF_{\tilde \AA^2}$.

\subsubsection{Singularities of $\mathscr{F}_{\tilde{\mathbb{A}}^2}$} The blow-up $\tilde \AA^2$ of $\AA^2$ is defined in $\AA^2 \times \PP^1$, with the coordinates $(x, y; s:t)$, by the equation $xt = ys$. One chart is given by $t\neq 0$, so without loss of generality $t=1$ and we obtain the chart in $\AA^3$ with coordinates $x, y, s$ described by $x = ys$. This chart is isomorphic to $\AA^2$ by the map $(x, y, s) \mapsto (y, s)$, and $y=0$ is the part of the exceptional divisor in this chart. We can express the form $\omega$ by
\[\omega = f\,dx + dy = f(ys, y) (y\,ds + s\,dy) + dy = (sf(ys,y) + 1)dy + yf(ys,y)\, ds.\] 
This is a regular differential that vanishes only where $y=sf(ys,y)+1 = 0$; namely, it could only vanish at the point $sf(0,0)+1 = 0$ on the exceptional divisor. If $f(0,0) = 0$ this is not possible. Otherwise, we get a unique point where the differential vanishes and it is the point \[(x, y; s:1) = (0, 0; -1/f(0,0):1).\]

The other chart is obtained by letting $s = 1$ and so $xt = y$ and this chart is isomorphic to $\AA^2$ by the map $(x, y, t) \mapsto (x, t)$. We can write now
\[\omega  = f\,dx + dy = f(x, xt)\,dx + t\,dx + x\,dt = (f(x, xt) + t)dx + x \,dt,\] which is a regular differential that can only vanish when $x = 0$ and $f(0,0) = -t$. We get a unique vanishing point 
\[ (x, y; 1:t) = (0, 0; 1: - f(0,0)),\] which, if $f(0,0) \neq 0$, is also the singular point found in the other chart. 

Either way, \textit{the foliation has a unique singular point on $\tilde \AA^2$, which is a point on the special fiber. The same holds for $\tilde Q$.} 

\subsubsection{Integral curves}\label{sec integralcurves} Note that a derivation $\partial$ annihilated by $\omega$ is given by \[\partial = \partial_x - f\partial_y;\] 
it is a tangent generator of the foliation. We look for polynomials $g\not\in k[x^p,y^p]$ such that $\partial g=0$ in order to define integral curves for the foliation.

\begin{lem}\label{lem: solutions to partial} There exists $g\not\in k[x^p,y^p]$ such that $\partial g=0$  if and only if  $\partial^p$ is the zero derivation. Every such $g$ satisfies $\partial_y g \neq 0$. Moreover, if $f$ is even, there exists such $g$ which is odd. 

Furthermore, if $\partial^p=0$ then $k[x,y]\cdot \partial$ is a smooth $p$-closed foliation on $\AA^2$, and thus the set $k[x,y]^\partial$ of all solutions to $\partial g = 0$ is a ring satisfying
\[ k[x^p, y^p] \subset k[x, y]^\partial \subset k[x, y],\]
and is a locally-free $k[x^p, y^p]$-module of rank $p$.
\end{lem}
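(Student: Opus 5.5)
The plan is to work in the fraction field $K=k(x,y)$, which has $p$-basis $\{x,y\}$ and satisfies $[K:K^p]=p^2$, and to reduce everything to whether $\partial$ generates a $p$-closed foliation. Since $\partial = \partial_x - f\partial_y$ has $\partial_x$-coefficient $1$, it is nowhere vanishing. The sheaf $k[x,y]\cdot\partial$ is saturated in $\calT_{\AA^2}$, with quotient free of rank one, generated by the image of $\partial_y$. So it is a smooth foliation, and it equals $dg^\perp$ for any $g$ with $\partial g=0$ and $dg\neq 0$.

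\textbf{Step 1: $p$-closedness forces $\partial^p=0$.} Note that $\partial^p(x)=\partial^{p-1}(1)=0$. Hence if $\partial^p=\lambda\partial$ for some $\lambda\in K$, evaluating at $x$ gives $\lambda=0$. So the foliation generated by $\partial$ is $p$-closed if and only if $\partial^p=0$.

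\textbf{Step 2: the equivalence.} Suppose $g\notin k[x^p,y^p]$ and $\partial g=0$. Then $g\notin K^p$, because $K^p\cap k[x,y]=k[x^p,y^p]$ by freeness of $k[x,y]$ on the monomials $x^iy^j$, $0\le i,j<p$. Hence $dg\neq 0$. Apply formula (\ref{formula: cartier}) with $\omega=dg$, using $\scrC(dg)=0$ and $\langle\partial,dg\rangle=\partial g=0$. This gives $\langle\partial^p,dg\rangle=0$, so $\partial^p\in dg^\perp=K\partial$, and Step 1 gives $\partial^p=0$.

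Conversely, if $\partial^p=0$, then by Jacobson's inseparable Galois theory the field of constants $K^\partial$ satisfies $[K:K^\partial]=p$. Hence $K^\partial\supsetneq K^p$. Take $h=a/b\in K^\partial\setminus K^p$ with $a,b$ polynomials; then $g:=b^p h = b^{p-1}a$ is a polynomial. It is killed by $\partial$ because $b^p$ is a constant, and $g\notin K^p$. This Jacobson step is where I expect the real input to lie. A more hands-on alternative would be to invoke the Ekedahl correspondence already used in the paper, which produces the intermediate variety between $\AA^2$ and its Frobenius twist.

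\textbf{Step 3: $\partial_y g\neq 0$ and odd solutions.} If $\partial g=g_x-fg_y=0$ and $g_y=0$, then $g_x=0$ as well, so $g\in k[x^p,y^p]$; hence every solution $g\notin k[x^p,y^p]$ has $\partial_y g\neq 0$.

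For the odd solution, let $\sigma$ be the involution $g(x,y)\mapsto g(-x,-y)$. When $f$ is even, $\sigma\partial\sigma=-\partial$, so $\ker\partial$ is $\sigma$-stable. Since $p\neq 2$ (the characteristic is assumed different from $2$ in this section), a solution $g$ splits as $g=g_++g_-$ into even and odd parts, both still solutions. If $g_-\notin k[x^p,y^p]$ we are done. Otherwise $g_+\notin k[x^p,y^p]$, and then $x^pg_+$ is an odd solution. It is not in $k[x^p,y^p]$, again by freeness on the monomial basis.

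\textbf{Step 4: the final statement.} Smoothness was shown above, and $p$-closedness is Step 1. The kernel of a derivation is a subring, and it contains $k[x^p,y^p]$. Moreover, $k[x,y]^\partial$ is the ring of the quotient $\AA^2/\scrF$, which is smooth because $\scrF$ is a smooth $p$-closed foliation (Ekedahl). It is finite over the regular ring $k[x^p,y^p]$, being contained in the finite module $k[x,y]$, and it is Cohen–Macaulay. By miracle flatness it is therefore locally free over $k[x^p,y^p]$. Its rank is $[K^\partial:K^p]=p^2/p=p$.
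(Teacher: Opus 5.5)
Your proof is correct. For the key existence step, however, it takes a different route from the paper.

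The direction ``solution $\Rightarrow$ $\partial^p=0$'' is the same in substance. The paper observes that $\partial^p(x)=0$ forces $\partial^p=\partial^p(y)\,\partial_y$ on $k[x,y]$, so $0=\partial^p g=\partial^p(y)\,\partial_y g$ with $\partial_y g\neq 0$. Your appeal to the Cartier formula is valid but tautological here, since $\langle \partial^p, dg\rangle=\partial^{p-1}(\partial g)=0$ directly.

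For the converse, the paper does not use Jacobson's theorem. It writes down the explicit solution $g=\sum_{i=0}^{p-1}\frac{(-x)^i}{i!}\partial^i(y)$. This is killed by $\partial$ once $\partial^p(y)=0$, and it is not in $k[x^p,y^p]$ because its $i=0$ term is $y$. The paper then gets oddness from the parity of $\partial^i(y)$ by induction. You instead symmetrize an arbitrary solution under $(x,y)\mapsto(-x,-y)$ and multiply an even one by $x^p$.

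What each route buys: the paper's is elementary and produces the normalized solution $g\in y+(x)$. That solution is used immediately afterwards (the curves $C_\kappa$, $\partial_y g(0,0)=1$), as is the criterion $\partial^p=0\iff\partial^p(y)=0$ in the subsequent remark. Yours is non-constructive, but needs no computation and applies to any solution.

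For the last assertion, the paper only cites Hochschild's formula $(h\partial)^p=h^p\partial^p+(h\partial)^{p-1}(h)\,\partial$ and the general correspondence with purely inseparable quotients. You give an actual argument for local freeness of rank $p$: finiteness, Cohen--Macaulayness, miracle flatness, and $[K^\partial:K^p]=p$. Two small points on this part:
\begin{itemize}
\item Hochschild's formula is implicitly needed in your Step 1, to pass from $\partial^p=0$ to $(h\partial)^p\in k[x,y]\,\partial$ for all $h$. It is needed again when you apply Jacobson to $K\partial$.
\item Cohen--Macaulayness does not require Ekedahl's smoothness theorem. The ring $k[x,y]^\partial=K^\partial\cap k[x,y]$ is already a normal two-dimensional noetherian domain, since its fraction field is $K^\partial$ by your $b^p$ trick.
\end{itemize}
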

\begin{proof} Since $\partial(x)=1$, we have $\partial^p(x)=0$. Hence, given a polynomial $F$, we get $\partial^p F=\partial^p(y) \partial_y F$. Hence, $\partial^p=0$ if and only if $\partial^p(y)=0$. 

If $g\not\in k[x^p,y^p]$ is a solution then $0=\partial^p g= \partial^p(y) \partial_y g$. If $\partial^p(y)$ were not $0$ then $\partial_y g=0$, and it follows that $0=\partial g=\partial_ x g=0$. This implies $g\in k[x^p,y^p]$ (contradiction).  The same argument implies that if $g\not\in k[x^p,y^p]$ is a solution then $\partial_y g\neq 0$ as claimed. 

Vice versa, if   $\partial^p(y)=0$, one checks that 
\begin{equation}\label{eqn: explicit g}
g=\sum_{i=0}^{p-1}\frac{(-x)^i}{i!}\,\partial^i(y)
\end{equation} 
is a solution to $\partial g=0$ and $g\not\in k[x^p,y^p]$ as the term for $i=0$ is $y$. 

One proves by induction that $\partial^i(y)$ is odd for $i$ even, and even for $i$ odd. It thus follows that $g$ is odd. 

As to the second part of the lemma, a formula of Hochschild \cite[Theorem 25.5]{Matsumura} gives
\[(h\partial)^p=h^p\partial^p+(h\partial)^{p-1}(h)\,\partial.
\]
from which it follows that $k[x,y]\cdot \partial$ defines a $p$-closed foliation, which is smooth as $\omega$ is nowhere vanishing; denote by $k[x,y]^\partial$ its ring of constants. The rest follows from the connection between foliations and purely inseparable maps $\AA^2\arr B$, factoring Frobenius.
\end{proof} 

\begin{prop} Let $f(x, y) \in k[x, y]$ be such that the derivation $\partial = \partial_x - f\partial_y$ satisfies $\partial^p = 0$. Take a solution $g$ as in Equation (\ref{eqn: explicit g}). The open set 
\[ U = \{ (x, y) \colon \partial_y g \neq 0\}\]
contains $(0,0)$. The curves,
\[C_\kappa\colon \quad g = \kappa,\]
are integral curves on $U$ for the foliation defined by $\omega = fdx + dy$. The involution $[-1]$ takes $C_\kappa$ to $C_{-\kappa}$. The integral curve $C_0$ passes through $(0, 0)$ and its strict transform meet the exceptional fiber at the point $(0, 0; s:t) = (0, 0; 1:-f(0))$, which is precisely the singular point of the foliation on $\tilde \AA^2$. 
\end{prop}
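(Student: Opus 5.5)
We obtain everything from the explicit solution $g$ of Equation~(\ref{eqn: explicit g}) together with Lemma~\ref{lem: solutions to partial}.

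\emph{$U$ contains the origin.} Differentiate $g=\sum_{i=0}^{p-1}\frac{(-x)^i}{i!}\partial^i(y)$ with respect to $y$. Every term with $i\ge 1$ carries a factor $x^i$, so at $x=0$ only the $i=0$ term survives and $\partial_y g(0,y)=\partial_y(y)=1$. In particular $\partial_y g(0,0)=1\neq 0$, so $(0,0)\in U$.

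\emph{The level sets $C_\kappa$ are integral curves on $U$.} On $U$ we have $dg\neq 0$, so each $C_\kappa\cap U$ is a smooth curve with tangent line $\ker(dg)$. The equation $\partial g=0$ reads $\partial_x g=f\,\partial_y g$, hence
\[
dg=\partial_x g\,dx+\partial_y g\,dy=\partial_y g\,(f\,dx+dy)=\partial_y g\cdot\omega .
\]
Since $\partial_y g$ is a unit on $U$, $\ker(dg)=\ker(\omega)$ there, so the tangent lines of $C_\kappa$ are exactly the leaves' directions.

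\emph{The involution.} For even $f$, Lemma~\ref{lem: solutions to partial} says $g$ is odd: $\partial^i(y)$ is odd for $i$ even and even for $i$ odd, so each term $x^i\partial^i(y)$ is odd. Therefore $g(-x,-y)=-g(x,y)$, and $[-1]$ maps $C_\kappa$ onto $C_{-\kappa}$. The set $U$ is $[-1]$-stable, because $\partial_y g$ is even. If $f$ is not assumed even, this claim simply uses the evenness hypothesis of the ambient setting.

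\emph{$C_0$ and the exceptional divisor.} Since $g$ is odd it has no constant term, so $g(0,0)=0$ and $(0,0)\in C_0$. Because $\partial_y g(0,0)=1$, the curve $C_0$ is smooth at the origin with tangent line $\ker(\omega_{(0,0)})=\{f(0)\,dx+dy=0\}$, i.e. $y=-f(0)x$. The strict transform of a curve smooth at the origin meets the exceptional divisor in the single point given by its tangent direction. In the chart $s=1$ (where $y=xt$), this is $t=y/x=-f(0)$, i.e. the point $(0,0;1:-f(0))$.

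The previous subsection found the unique vanishing point of the pulled-back form to be $(0,0;1:-f(0,0))$, so the two points coincide. The case $f(0)=0$ still lies in the $s=1$ chart, with $t=0$, and the earlier analysis covers it (there the singular point is $t=0$ in that chart).

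The main care needed is the tangent-direction computation, in particular keeping the chart conventions $(s:t)$ consistent with the previous subsection. Everything else is direct.
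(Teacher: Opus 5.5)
Your proposal is correct and follows essentially the same route as the paper: use $\partial g=0$ to get $dg=\partial_y g\cdot\omega$, and observe that $g-y\in(x)$, so $\partial_y g(0,0)=1$. You also spell out correctly what the paper dismisses as ``a straightforward calculation'', namely the oddness of $g$ behind the involution claim and the tangent direction $y=-f(0)x$ of $C_0$, whose strict transform therefore meets the exceptional divisor at $t=-f(0)$ in the chart $s=1$.
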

\begin{proof} The relation $0 = \partial g = \partial_x g - f\partial_y g$ gives
\[ f\partial_y g= \partial_xg.\]
It then follows that 
\[ dg = \partial_y g (f dx+ dy),\]
which is nowhere vanishing along the open set $U$, and only there. As $g = y + h.o.t.$, where the higher order terms belong to the ideal $(x, y^2)$, we find that $\partial_yg (0, 0) = 1$ and thus the point $(0,0) \in U$. 
Thus, the level curves $C_\kappa$ are smooth integrals curves for $\partial$ along $U$. The assertion regarding $C_0$ is a straight-forward calculation. 
\end{proof}
\begin{rmk} If $f(x, y) \in k[x]$, then the condition $\partial^p = 0$ is equivalent to $\partial^p(y) = 0$. However, $\partial^p(y) = - f^{(p-1)}$ and the condition $\partial^p = 0$ is equivalent to the condition that every power $x^a$ appearing in $f$ satisfies $a \not\equiv p-1 \pmod{p}$. This condition is equivalent to $f$ having an anti-derivative $h$ relative to $\partial_x$. In fact, in this case a solution to $\partial(g) = 0$ can be chosen to be $g = h+y$.
\end{rmk}

\medskip

\id Passing now to $\tilde Q$, or rather to $\tilde \AA^2$, let us consider the exceptional divisor in the first chart. In this chart, isomorphic to $\AA^2$ with coordinates $y, s$, the foliation is defined by the form \[\omega = (sf(ys,y) + 1)\,dy + yf(ys,y)\, ds.\] 
A derivation $\partial$ killed by $\omega$ is given by
\[ \partial = yf(ys,y) \partial_y- (sf(ys,y) + 1) \partial_s.\]
The exceptional divisor is given in this chart by $y=0$ and we have $\partial(y) = yf(ys,y)$, which vanishes along the exceptional curve. Thus, \textit{the exceptional divisor is an integral curve.} Note this last conclusion does not depend on the existence of the integral curves $C_\kappa$; namely, it holds true for any even $f$, whether $\partial^p = 0$ or not.

\

\subsubsection{Construction of singular $p$-foliations on Kummer surfaces} As above, assume that $p>2$. As we have done before, let $A$ be an abelian surface over an algebraically closed field $k$ of characteristic $p$, such that there is an exact sequence 
\[ \xymatrix{0 \ar[r]& E \ar[r] & A \ar[r]^\tau & Y \ar[r] & 0},\]
where we treat the first arrow as an inclusion; $E, Y$ are elliptic curves. Let $f$ be an even function on $Y$, relative to its involution $[-1]$. Otherwise phrased, if $Y$ is given by a Weierstrass equation $y^2 = x^3 + ax^2 + bx + c$ then $f$ is a function of~$x$. With our previous conventions, we have the differential 
\[\eta = \tau^\ast (f(x) \omega_Y) + \tilde \omega_E.  \]
It is a meromorphic differential on $A$ that under suitable conditions (see Theorems~\ref{p closed thm 1},~\ref{p closed thm 2}) defines a $p$-closed foliation; it has the property 
\[ [-1]^\ast \eta = -\eta,\]
and so the foliation $\scrF$ it defines is preserved under $[-1]$. It descends to a foliation $\scrF_Q$ on the singular Kummer surface 
\[ Q = A/\{ \pm 1\}.\]
The morphism 
\[ \pi\colon A \arr Q\]
is $2:1$ and is ramified at the $16$ points $A[2]$ that become singular points on $Q$ (elsewhere it is an \'etale double cover). Thus, $Q$ is a surface with $16$ singular points comprising $Q^{sing}$. The foliation~$\scrF$ defines a foliation $\scrF_{\tilde Q}$ on the non-singular Kummer $K3$ surface $\tilde Q$ obtained by blowing up the $16$ singular points of $Q$. A priori, the foliation $\scrF_{\tilde Q}$ is only defined on the complement of the exceptional divisor $E$ on $\tilde Q$ simply because $\tilde Q - E \cong Q - Q^{sing}$. However, it extends uniquely, by a general procedure, to a foliation $\scrF_{\tilde Q}$ on $\tilde Q$ with isolated singularities. 

\begin{lem}\label{lem:local coord} Let $P$ be a $2$-torsion point of $A$ and assume $f$ is regular at $\tau(P)$. There is a choice of a local coordinate $x_1$ on $Y$ at $\tau(P)$ such that $[-1]^\ast(x_1) = -x_1$. Denoting by $x_1$ also the pullback $\tau^\ast(x_1)$, there is a choice of a local coordinate $x_2$ at $P$ with the properties (i) $[-1]^\ast(x_2) = -x_2$ and (ii)  for suitable power series $\alpha, \beta \in k[\![x_1, x_2]\!]$, with $\beta$ invertible,
\[ \eta = \alpha(x_1, x_2) dx_1 + \beta(x_1, x_2) dx_2. \]

\end{lem}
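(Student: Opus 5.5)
The plan is to treat the two coordinates separately, using the fact that $[-1]$ acts on the cotangent space at a $2$-torsion point by $-1$, and that $p>2$ lets us average any coordinate into an odd one. Note that $[-1]$ fixes $P$ and $\tau(P)$, since $P$ and $\tau(P)$ are $2$-torsion. Also $\tau\circ[-1]_A = [-1]_Y\circ\tau$.

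For $x_1$, start from any uniformizer $t$ of $\calO_{Y,\tau(P)}$ and set $x_1 = \tfrac12\bigl(t - [-1]^\ast t\bigr)$. On the cotangent space $\mathfrak m/\mathfrak m^2$ at a fixed point, $[-1]$ acts as $-1$. Indeed, the differential of $[-1]$ at a $2$-torsion point is $-\mathrm{id}$: translate by $\tau(P)$, which commutes with $[-1]$ up to translation by $2\tau(P)=0$. Hence $[-1]^\ast t \equiv -t \bmod \mathfrak m^2$, so $x_1\equiv t$ is again a uniformizer, and it is odd by construction.

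For $x_2$ on $A$ at $P$, first observe that $\tau^\ast x_1$ is part of a regular system of parameters at $P$. This holds because $\tau$ is smooth, so $d(\tau^\ast x_1)$ is nonzero at $P$. Choose $u\in\mathfrak m_P$ with $\{\tau^\ast x_1, u\}$ a basis of $\mathfrak m_P/\mathfrak m_P^2$, and replace $u$ by $\tfrac12(u-[-1]^\ast u)$. This is still congruent to $u$ modulo $\mathfrak m_P^2$, by the same differential computation on $A$, and it is odd.

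It remains to arrange (ii), namely that the coefficient of $dx_2$ in $\eta$ is a unit. Write $\eta = \tau^\ast(f\omega_Y) + \tilde\omega_E$. Since $f$ is regular at $\tau(P)$, $\tau^\ast(f\omega_Y) = \tau^\ast(f\,u_Y)\,dx_1$ is a multiple of $dx_1$ with regular coefficient. Here $u_Y$ is the unit with $\omega_Y = u_Y\,dx_1$. So the $dx_2$-coefficient of $\eta$ equals that of $\tilde\omega_E$.

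Since $\tilde\omega_E$ and $\tilde\omega_Y$ form a basis of $\Omega^1_A$ everywhere, and $\tilde\omega_Y$ is a unit multiple of $dx_1$, the form $\tilde\omega_E$ is not in the span of $dx_1$ at $P$. Its $dx_2$-coefficient is therefore nonzero at $P$, hence a unit $\beta$ in $k[\![x_1,x_2]\!]$. The coefficient $\alpha$ is then $\tau^\ast(fu_Y)$ plus the $dx_1$-coefficient of $\tilde\omega_E$, which is a power series.

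There is no real obstacle. The one point needing care is justifying that $[-1]$ acts by $-1$ on the cotangent space at $2$-torsion points, so that the averaging does not collapse the parameters modulo $\mathfrak m^2$. Averaging requires inverting $2$, which is allowed since $p>2$.
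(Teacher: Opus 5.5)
Your proposal is correct and follows essentially the same route as the paper: average a uniformizer on $Y$ and a complementary parameter at $P$ into odd ones, using that $[-1]$ acts by $-1$ on cotangent spaces at $2$-torsion points, then read off that $\beta$ is a unit from the independence of $\tilde\omega_E$ and $\tilde\omega_Y$ at $P$. The only cosmetic difference is that the paper normalizes $x_2$ so that $dx_2 \equiv \tilde\omega_E$ in the cotangent space at $P$, which gives $\beta(0,0)=1$ directly, whereas you take an arbitrary complement and deduce $\beta(P)\neq 0$ from the basis property.
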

\begin{proof} Denote  $\sigma = [-1]$. Note that $\sigma$ acts by $-1$ on the tangent space of $A$ (or of $Y$) at any $2$-torsion point. Taking any local uniformizer $s$ on $Y$ at $\tau(P)$, we may replace it by $(s-\sigma(s))/2$, because modulo the square of the maximal ideal, $(s-\sigma(s))/2\equiv s$ and so, by Nakayama's lemma, $(s-\sigma(s))/2$ is also a uniformizer. In this notation $\tau^\ast\omega_Y = \gamma_1(x_1) dx_1$.

Choose a local parameter $x_2$ at $P$ on $A$ such that $\{x_1, x_2\}$ are local coordinates at $P$ and $dx_2 \equiv \tilde \omega_E$ in the cotangent space at $P$. By the same argument of replacing $x_2$ by $(x_2 - \sigma(x_2))/2$, we may achieve $\sigma(x_2) = -x_2$ and then $\tilde \omega_E = \gamma_2(x_1, x_2) dx_1+\beta(x_1, x_2) dx_2$, where $\beta(x_1, x_2) \in k[\![x_1,x_2]\!]^\times$ and $\gamma_2 \in (x_1, x_2) \subset k[\![x_1, x_2]\!]$.
In this notation \[\eta = \tau^\ast(f(x_1) \omega_Y) + \tilde \omega_E = (f(x_1) \gamma_1(x_1) + \gamma_2(x_1, x_2))\;dx_1 + \beta(x_1, x_2)\; dx_2.\]
\end{proof}

Lemma~\ref{lem:local coord} provides us with local coordinates at any ramification point $P$ of the morphism $A \arr Q$ that show that at the complete local ring of $P$ the description of the foliation is essentially as given in the plane example studied above, once we replace $\eta$ with 
$\omega = \beta(x_1,x_2)^{-1}\eta = f(x_1, x_2)dx_1 + dx_2$,
with the minor difference that $f(x_1, x_2)$ is now a power series in $x_1, x_2$ and not just a polynomial. None of this affects the discussion. We can therefore conclude the following proposition, once we note that being $p$-closed can be checked on a dense open set.  

\begin{prop}  \label{prop: foliations on kummer}With the notation above, assume that $f$ is an even function that is regular 
at every point of $Y[2]$. Then $\scrF_{\tilde Q}$ is a singular foliation on the $K3$ surface $\tilde Q$ obtained from $A$ and $\scrF_{\tilde Q}$ has exactly $16$ singular points, one on each exceptional fiber of the morphism $\tilde Q \arr Q$. Each exceptional curve is a rational integral curve of $\scrF_{\tilde Q}$. Furthermore, if the foliation $\scrF$ on $A$ is $p$-closed, so is $\scrF_{\tilde Q}$.
\end{prop}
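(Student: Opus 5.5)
The argument reduces everything to the local model of the plane and quadratic cone, applied at each of the $16$ points of $A[2]$, and handles the region away from those points by an étale argument.

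\emph{Away from the exceptional curves.} Since $\pi\colon A\arr Q$ is étale outside $A[2]$ and $\scrF$ is smooth and $[-1]$-stable (because $[-1]^\ast\eta=-\eta$), the descended foliation $\scrF_Q$ is smooth on $Q-Q^{sing}\cong \tilde Q - E$. Indeed, smoothness is étale local: at a point $x\notin A[2]$ a local generator of $\scrF$ near $x$ maps isomorphically to one of $\scrF_Q$ near $\pi(x)$. Hence every singular point of $\scrF_{\tilde Q}$ lies on one of the $16$ exceptional curves.

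\emph{Near the exceptional curves.} Fix $P\in A[2]$. Because $f$ is even, $\tau(P)\in Y[2]$, and $f$ is regular there by hypothesis, so Lemma~\ref{lem:local coord} applies. It gives anti-invariant formal coordinates $x_1,x_2$ at $P$ in which
\[\omega=\beta^{-1}\eta=f(x_1,x_2)\,dx_1+dx_2 .\]
Here $\beta^{-1}$ is $[-1]$-invariant, or can be made so by averaging. Also $[-1]^\ast\omega=-\omega$, so $f(x_1,x_2)$ is an even power series. This is exactly the situation of the plane model, with power series in place of polynomials. The chart computations of the blow-up go through verbatim because they only use that $f(0,0)$ is a constant.
\begin{itemize}
\item On $\tilde\AA^2$ the pulled-back form vanishes at exactly one point of the exceptional curve, namely $(0,0;1:-f(0,0))$, or $(0,0;0:1)$ if $f(0,0)=0$.
\item Since $\Phi\colon\tilde\AA^2\arr\tilde Q$ is totally ramified along the exceptional curve, which $G$ fixes pointwise, the foliation on $\tilde Q$ near $D$ is obtained as in the cone computation.
\item The smooth points upstairs descend to smooth points of $\scrF_{\tilde Q}$, and the unique singular point descends to a singular point.
\end{itemize}

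The main obstacle is making this descent across the ramification divisor rigorous, since $\Phi$ is not étale there. I would do it in coordinates. In the chart $x_1=ys$, with $G$ acting by $(y,s)\mapsto(-y,s)$, the quotient coordinates are $(y^2,s)$. The generator $\partial=yf\,\partial_y-(sf+1)\partial_s$ is $G$-invariant, because $yf$ is odd and $sf+1$ is even. It therefore induces a derivation in the coordinates $(Y=y^2,s)$:
\[2Yf\,\partial_Y-(sf+1)\partial_s .\]
This derivation vanishes only where $Y=0$ and $sf+1=0$. That gives exactly one singular point on each exceptional curve, and the same holds in the other chart.

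\emph{Integral curves.} The $\partial_Y$-coefficient $2Yf$ vanishes along $Y=0$, so each exceptional curve $D\cong\PP^1$ is an integral (rational) curve of $\scrF_{\tilde Q}$. This matches the observation after the integral-curves discussion.

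\emph{$p$-closedness.} The $p$-closed condition is that the $p$-curvature map $F_{\rm abs}^\ast\scrF_{\tilde Q}\arr \calT_{\tilde Q}/\scrF_{\tilde Q}$ vanishes. Its target is torsion-free by saturation, so it suffices to check vanishing on a dense open set. On $\tilde Q-E\cong Q-Q^{sing}$, which is étale-locally isomorphic to $A-A[2]$, it follows from $\scrF$ being $p$-closed: the $p$-th power of a local generator of $\scrF$ lies in $\scrF$, and étale morphisms are compatible with $p$-th powers of derivations.
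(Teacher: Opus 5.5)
Your proposal is correct and follows the paper's route: at each point of $A[2]$ you reduce via Lemma~\ref{lem:local coord} to the plane/cone model, read off the unique singular point and the invariance of the exceptional curve from the blow-up charts, and check $p$-closedness on the dense open set where $\pi$ is \'etale. Your explicit descent of the invariant derivation to $2Yf\,\partial_Y-(sf+1)\partial_s$ usefully fills in the paper's bare assertion that ``the same holds for $\tilde Q$.'' One small slip: when $f(0,0)=0$ the singular point is $(0,0;1:0)$, in the chart $s=1$, which is just $(1:-f(0,0))$; it is not $(0,0;0:1)$, where the form equals $dy\neq 0$.
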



\begin{thebibliography}{99}

\bibitem{Bost} Bost, Jean-Beno\^{i}t:
Algebraic leaves of algebraic foliations over number fields.
Publ. Math. Inst. Hautes \'Etudes Sci. No. 93 (2001), 161--221.

\bibitem{Bru1} Brunella, Marco:
Minimal models of foliated algebraic surfaces.
Bull. Soc. Math. France 127 (1999), no. 2, 289--305.

\bibitem{Brunella} Brunella, Marco:
Codimension one foliations on complex tori.
Ann. Fac. Sci. Toulouse Math. (6) 19 (2010), no. 2, 405--418.

\bibitem{Cartier} Cartier, Pierre:
Une nouvelle opération sur les formes diff\'erentielles.
C. R. Acad. Sci. Paris 244 (1957), 426--428.

\bibitem{EGA} Grothendieck, A.; Dieudonn\'e, J.: \'Eléments de g\'eom\'etrie alg\'ebrique: IV. Troisi\`eme partie. \'Etude locale des sch\'emas et des morphismes de sch\'emas. Publ. Math. Inst. Hautes \'Etudes Sci. No. 28 (1966), 5--255.

\bibitem{EvdGM} Edixhoven, B.; van der Geer, G.; and Moonen, B.: \textit{Abelian Varieties} (draft of a book). \url{http://van-der-geer.nl/~gerard/AV.pdf}.

\bibitem{Ekedahl} Ekedahl, Torsten:
Foliations and inseparable morphisms. Algebraic geometry, Bowdoin, 1985 (Brunswick, Maine, 1985), 139--149.
Proc. Sympos. Pure Math., 46, Part 2.
American Mathematical Society, Providence, RI, 1987.

\bibitem{FMST}  Fassarella, Thiago;  Mendson, Wodson;  dos Santos, Jo\~ao Pedro;  Touzet, Fr\'edéric: Foliations with small singular set in arbitrary characteristic. 2026. \url{https://arxiv.org/abs/2607.18571}

\bibitem{GdS} Goren, Eyal Z.; de Shalit, Ehud:
Foliations on Shimura varieties in positive characteristic.
J. Algebraic Geom. 33 (2024), no. 3, 401--454.

\bibitem{HartshorneReflexive} Hartshorne, Robin:
Stable reflexive sheaves.
Math. Ann. 254 (1980), no. 2, 121--176.

\bibitem{HindrySilverman} Hindry, Marc; Silverman, Joseph H.:
\textit{Diophantine geometry. An introduction.}
Grad. Texts in Math., 201.
Springer-Verlag, New York, 2000. 

\bibitem{H} Huybrechts, Daniel: \textit{Lectures on K3 Surfaces}, Cambridge Studies in Advanced Mathematics 158, Cambridge University Press, 2016.

\bibitem{HL} Huybrechts, Daniel; Lehn, Manfred:
\textit{The geometry of moduli spaces of sheaves.}
Second edition.
Cambridge Math. Lib.
Cambridge University Press, Cambridge, 2010.

\bibitem{Katz}Katz, Nicholas M.:
Nilpotent connections and the monodromy theorem: Applications of a result of Turrittin.
Inst. Hautes \'Etudes Sci. Publ. Math. No. 39 (1970), 175--232.

\bibitem{Katz2} Katz, Nicholas M.:
Algebraic solutions of differential equations ($p$-curvature and the Hodge filtration).
Invent. Math. 18 (1972), 1--118.

\bibitem{Joelle} Lim, Joelle: McGill Ph.D. thesis. \textit{In preparation}.

\bibitem{LBP} Lo Bianco, Federico; Pereira, Jorge Vit\'orio:
Smooth foliations on homogeneous compact K\"ahler manifolds.
Ann. Fac. Sci. Toulouse Math. (6) 25 (2016), no. 1, 141--159.

\bibitem{Manin} Manin, Ju. I.: The Hasse–Witt matrix of an algebraic curve. 
Izv. Akad. Nauk SSSR Ser. Mat. 25 (1961), 153--172. Amer. Math. Soc. Transl. (2),
45 (1965), 245--264. Translated by J. W. S. Cassels.

\bibitem{Matsumura} Matsumura, Hideyuki:
\textit{Commutative ring theory}.
Cambridge Stud. Adv. Math., 8.
Cambridge University Press, Cambridge, 1989.

\bibitem{McQ} McQuillan, Michael:
Canonical models of foliations.
Pure Appl. Math. Q. 4 (2008), no. 3, Special Issue: In honor of Fedor Bogomolov. Part 2, 877--1012.

\bibitem{MeP} Mendson, Wodson; Pereira, Jorge Vit\'orio:
Codimension one foliations in positive characteristic.
J. Inst. Math. Jussieu 23 (2024), no. 4, 1705--1750.

\bibitem{Milne} Milne, James S.: Jacobian Varieties in ``Arithmetic geometry'' (Storrs, Conn., 1984), Springer, New York, 1986, 167--212.

\bibitem{Miyaoka} Miyaoka, Yoichi:
Deformations of a morphism along a foliation and applications. Algebraic geometry, Bowdoin, 1985 (Brunswick, Maine, 1985), 245--268.
Proc. Sympos. Pure Math., 46, Part 1.
American Mathematical Society, Providence, RI, 1987.

\bibitem{MP} Miyaoka, Yoichi; Peternell, Thomas:
\textit{Geometry of higher-dimensional algebraic varieties.}
DMV Sem., 26.
Birkh\"auser Verlag, Basel, 1997.

\bibitem{MB} Moret-Bailly, Laurent:
Pinceaux de vari\'et\'es ab\'eliennes.
Ast\'erisque No. 129 (1985).

\bibitem{MumfordCurvesOnSurfaces} Mumford, David:
\textit{Lectures on curves on an algebraic surface.}
With a section by G. M. Bergman.
Ann. of Math. Stud., No. 59.
Princeton University Press, Princeton, NJ, 1966. 

\bibitem{MumfordAbelianVarieties} Mumford, David:
\textit{Abelian varieties.}
With appendices by C. P. Ramanujam and Yuri Manin. Corrected reprint of the second (1974) edition.
Tata Inst. Fund. Res. Stud. Math., 5.
Published for the Tata Institute of Fundamental Research, Bombay; by Hindustan Book Agency, New Delhi, 2008.

\bibitem{OortCGS} Oort, F.:
\textit{Commutative group schemes}.
Lecture Notes in Math., 15.
Springer-Verlag, Berlin-New York, 1966.

\bibitem{EO} Oort, Frans:
A stratification of a moduli space of polarized abelian varieties in positive characteristic. \textit{Moduli of curves and abelian varieties}, 47--64.
Aspects Math., E33.
Friedr. Vieweg \& Sohn, Braunschweig, 1999.

\bibitem{RS} Rudakov, A. N.; Shafarevich, I. R.:
Inseparable morphisms of algebraic surfaces.
Izv. Akad. Nauk SSSR Ser. Mat. 40 (1976), no. 6, 1269--1307, 1439.

\bibitem{Seshadri} Seshadri, Conjeerveram Srirangachari: L'op\'eration de Cartier. Applications. \textit{In}
S\'eminaire Claude Chevalley, Vari\'et\'es de Picard, Volume 4 (1958-1959), Talk no. 6, 26 p.

\bibitem{SB} Shepherd-Barron, N. I.: Miyaoka's theorems on the generic seminegativity of $T_X$ and on the Kodaira dimension of minimal regular threefolds. In \textit{Flips and abundance for algebraic threefolds.}
Papers from the Second Summer Seminar on Algebraic Geometry held at the University of Utah, Salt Lake City, Utah, August 1991, Ast\'erisque No. 211 (1992), 103--114.

\bibitem{StackProject} The Stacks Project. \url{https://stacks.math.columbia.edu}.
\end{thebibliography}
\end{document}